\documentclass[11pt,leqno]{article}
\usepackage{amsmath}
\usepackage{amsthm}
\usepackage{amstext}
\usepackage{amsopn}
\usepackage{texdraw}
\usepackage{graphicx}
\oddsidemargin 0in \topmargin 0in \textwidth 6.2in \textheight 9.2in
\baselineskip=20pt
\parskip=2mm
\parindent=20pt

\newtheorem{theorem}{Theorem}[section]
\newtheorem{lemma}[theorem]{Lemma}
\newtheorem{proposition}[theorem]{Proposition}

\theoremstyle{definition}
\newtheorem{definition}[theorem]{Definition}

\theoremstyle{remark}
\newtheorem{remark}[theorem]{Remark}

%
\begin{document}
\title{
 A new variational method with SPBC and  many stable choreographic solutions of the Newtonian $4$-body problem }
\author{Tiancheng Ouyang \\
 Department of Mathematics, Brigham Young University\\
 Provo, Utah 84602, USA\\
 Email: ouyang@math.byu.edu\\
 Zhifu Xie 
\\
Department of Mathematics and Computer Science\\
 Virginia State University\\
Petersburg, Virginia 23806, USA\\
 Email: zxie@vsu.edu}
\date{}

\maketitle
\begin{abstract}

   After the existence proof of the first remarkably stable simple choreographic
motion-- the figure eight of the planar three-body problem by Chenciner and Montgomery in 2000, a great number of simple choreographic solutions have been discovered numerically  but very few of them have rigorous existence proofs and none of them are stable. Most important to astronomy are stable periodic solutions which might actually be seen in some stellar system.  A question for simple choreographic solutions  on $n$-body problems naturally arises: Are there any other stable simple choreographic solutions except the figure eight? 

In this paper, we  prove the existence of infinitely many simple choreographic solutions in the classical Newtonian $4$-body problem by developing a new variational method with structural prescribed boundary conditions (SPBC). One of the essential features of the new method is that the method works for general $n$-body problem without any constraints on masses or symmetries and it is easy to execute numerically by a computation pragram. Surprisingly, a family of choreographic orbits of this type are all linearly stable. Among the many stable simple choreographic orbits,  the most extraordinary one is the stable star pentagon choreographic solution.  The star pentagon is assembled out of four pieces of curves which are obtained by minimizing the Lagrangian action functional over the SPBC.
   We also prove the existence of infinitely many double choreographic periodic solutions, infinitely many non-choreographic periodic solutions and uncountably many quasi-periodic solutions. Each type of periodic solutions have many stable solutions and possibly  infinitely many stable solutions. 

\end{abstract}
{\bf Key word:} Variational Method, Choreographic Periodic Solutions, Structural Prescribed Boundary Conditions, Stability, Central Configurations, $n$-body Problem.\\
{\bf AMS classification number:} 37N05, 70F10,70H12, 70F15, 37N30, 70H05, \\

\section{ Introduction}

\begin{figure}
\includegraphics[height=7cm,width=.6\textwidth]{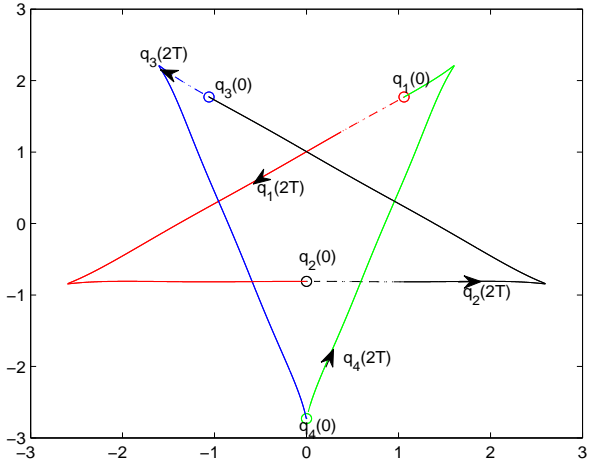}

\caption{ The first $40$-th of the star pentagon, traveling from an isosceles triangle $q_i(0)$ with one in the axis of its symmetry to a trapezoid, and then back to another isosceles triangle $q_i(2T)$. The entire star pentagon can be assembled by reflection, permutation, and rotation of the initial pieces (dashdotted). Initial conditions are: $q_1=(1.0598738926379,$ $   1.7699901770118 ),$ $ \dot{q}_1=(-0.55391384867197,  $ $    -0.39895079845794),$
 $q_2=(0, -0.80951135793043),$   $\dot{q}_2=(1.0936551555351,   0),$  $q_3=(-1.0598738926379, $ $  1.7699901770118), $    $\dot{q}_3=(-0.55391558212647,    $ $   0.39895379682134),$
     $q_4=(0,  -2.7304689960932),$       $\dot{q}_4=(0.01417427526245,   0),$       $m_1=m_2=m_3=m_4=1, T=1.$
  }\label{fig1}\end{figure}


Periodic solutions to the Newtonian $n$-body problem have been extensively studied for centuries. Variational method has been applied to  obtain solutions for the $n$-body problem more than one hundred years since  Poincar\'{e} \cite{PH} in 1896. 
In the past decade, the existence of many new interesting periodic orbits are proved by using variational method for the n-body problem. Most of them are found by minimizing the Lagrangian action on a symmetric loop space with some topological constraints (for example, see \cite{BT, BT2, Chen1, DengZhangZhou, FG, FT, TE, TV}).  

 A {\it simple choreographic solution} (for short, choreographic solution) is a periodic solution that all bodies chase one another along a single closed orbit. If the orbit of a periodic solution consists of two
 closed curves, then it is
called a {\it double-choreographic solution}. If the orbit of a periodic solution consists of different closed curves, each of which is the trajectory of exact one body,  it is called {\it non-choreographic solution}. Many relative equilibria give rise to simple choreographic solutions and they are called trivial choreographic solutions (circular motions).
The first remarkable non-trivial choreographic solution -- the figure eight of the three body problem  was  discovered numerically by Moor (1993 \cite{Moor}) and it was proved by the variational method by Chenciner and Montgomery (2000, \cite{CM}). Many expertises attempt to study choreographic solutions and a large number of simple choreographic solutions have been discovered numerically  but very few of them have rigorous existence proofs. More results can be found in \cite{ABT, BT2, BCPS, CGMS, CV, Chen2, Ouyang1,DengZhangZhou} and the reference therein.

In 2002, Chenciner-Gever-Montgomery-Sim\'o made a conjecture  in \cite{CGMS} that there exists a nontrivial simple choreographic solution for the equal mass Newtonian $n$-body problem for every $n\geq 3$. Although we focus on 4-body problem in this paper, our method can be used to prove the existence of choreographic solutions in general Newtonian $n$-body problem for every $n\geq 3$. Furthermore, we claim that there exist infinitely many simple choreographic solutions for every $n\geq 4$ and we will study this in future papers. 

 Figure eight is a remarkably  non-trivial simple choreographic solution, but more importantly, it is stable and the stability was proved in (\cite{KS, RG}). Most important to astronomy are stable periodic solutions which means that there is some chance that such periodic solutions might actually be seen in some stellar system. To the best knowledge of the authors, all of the above known simple choreographic solutions are unstable except the figure eight. It seems very hard to find a stable simple choreographic solution (C. Sim\'o \cite{SM} and R. Vanderbei \cite{Va}). Surprisingly, many simple choreographic orbits in this paper  are linearly stable. 
%


 In this paper, a new variational method with structural prescribed boundary conditions (SPBC) which is mainly developed by one of the authors, T. Ouyang, is introduced to construct periodic solutions of the $n$-body problem. The original motivation for our work is practical  and aesthetic: we want to present a concise and effective method not only to find many different types of beautiful new periodic motions for general $n$-body problem but also actually to prove the existence of these motions. Instead of considering the whole path in some loop space with some topological or geometric symmetric constrains, we consider the boundary value problem with appropriate prescribed boundary configurations  which are given by two $n \times d$ matrices, $Qstart$ and $Qend$. For instance in the following theorem \ref{main}, $q(0)=Qstart$ and $q(T)=Qend$ are two $4 \times 2$  matrices possessing an appropriate prescribed structure. The most innovative improvement of this method is to utilize a two-step minimizing process with a proper SPBC to find some appropriate pieces of orbits so that they can be assembled out to a periodic solution (or a quasi-periodic solution). Minimizers are obtained in the full space (not in a restricted symmetric space) with the SPBC.
  The method works for general $n$-body problem without any constraints on masses or symmetries.

Using the proposed variational method with the SPBC, we give rigorous existence proofs of infinitely many simple choreographic solutions. The simulation of the solutions for the n-body problem can be found at http://sest.vsu.edu/$\sim$zxie/N\underline{\hbox{ }}body\underline{\hbox{ }}Simulation.htm. Surprisingly, a family of choreographic orbits of this type are all linearly stable by numerical analysis. Among the many stable simple choreographic orbits,  the most extraordinary one is the stable star pentagon choreographic solution (see Figure \ref{fig1}).  The star pentagon is assembled out of four pieces of curves which are obtained by minimizing the Lagrangian action functional over the SPBC.  We also prove the existence of infinitely many double choreographic periodic solutions, infinitely many non-choreographic periodic solutions and uncountably many quasi-periodic solutions. Each type of periodic solutions have many stable solutions and  possibly  infinitely many stable solutions.

\begin{theorem}\label{main}
For any fixed $T>0$, and $\theta=\frac{2\pi}{5}$, let the structural prescribed boundary conditions be two fixed boundary configurations $q(0)= \left( \begin{array}{cc} a_1 & a_2\\
0 & -a_3\\ -a_1 & a_2\\
0 & -2a_2+a_3 \end{array} \right)$ and $q(T)= \left( \begin{array}{rr} -a_5 & a_4\\
a_5 & a_4\\ -a_6 & -a_4\\
a_6 & -a_4 \end{array} \right) R(\theta)$, where $\vec{a}=(a_1, a_2, \cdots, a_6)\in \mathbf{R}^6$, and the rotation matrix $R(\theta)= \left( \begin{array}{ll} \cos(\theta) & -\sin(\theta)\\
\sin(\theta) & \cos(\theta)\end{array} \right)$. Then there exists an $\vec{a}_0\in \mathbf{R}^6$ such that a minimizing path $q^*(t)=(q_1^*(t), q_2^*(t), q_3^*(t), q_4^*(t))$  on $[0, T]$  connecting $q(0)$ and $q(T)$  can be extended to a periodic solution $q(t)$ of the Newton's equation.
The periodic solution $q(t)$ (see Figure \ref{fig1}) has minimum period $\mathcal{T}=40T$ with the following properties:
\begin{enumerate}
\item[1.] (Noncollision) $q_i(t)\not=q_j(t)$ for any $t$ and $i\not=j$.
\item[2.] (Choreographic) $q_2(t)=q_1(t+10T)$, $q_3(t)=q_1(t+20T)$, $q_4(t)=q_1(t+30T)$, and $q_1(t)=q_1(t+40T)$.
\item[3.] (Symmetry) $q_1(-t)=q_3(t)B$, $q_2(-t)=q_2(t)B$, $q_3(-t)=q_1(t)B$,  and $q_4(-t)=q_4(t)B$, where $B=\left( \begin{array}{ll} -1 & 0\\
0 & 1\end{array} \right)$ is the reflection about $y$-axis, i.e. $q_j(t)B=(-q_{j1}(t), q_{j2}(t))$.
\item[4.] (Geometric Transition) $\{q_i(2kT)\}$ are vertices of an isosceles triangle with an interior point on the axis of the isosceles triangle. $\{q_i((2k+1)T)\}$ are vertices of a trapezoid.
    \item[5.] (Stability) $q(t)$ is a linearly stable star pentagon choreographic solution.
\end{enumerate}
\end{theorem}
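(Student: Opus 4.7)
The plan is to realize the star pentagon as the concatenation of forty smoothly-glued copies of a single fundamental piece $q^*:[0,T]\to(\mathbf{R}^2)^4$ obtained from a two-level minimization of the Lagrangian action $A(q)=\int_0^T\bigl(K(\dot q)-U(q)\bigr)\,dt$ subject to the SPBC. For each fixed $\vec a\in\mathbf{R}^6$, let $\Lambda_{\vec a}$ be the $H^1$ path space joining $q(0)=Qstart(\vec a)$ to $q(T)=Qend(\vec a)$. Coercivity of $K$ and weak lower semicontinuity of $A$ away from collisions give, by the direct method, a minimizer $q_{\vec a}^*\in\Lambda_{\vec a}$ once one rules out the infimum being attained on collision paths. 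Setting $\mathcal A(\vec a):=A(q_{\vec a}^*)$, the outer problem is to minimize $\mathcal A$ over $\vec a$; I would show $\mathcal A$ is continuous and proper on the open domain where $Qstart(\vec a)$ and $Qend(\vec a)$ are collision-free (blow-up as $\vec a$ approaches a collision configuration, and as $|\vec a|\to\infty$ by the kinetic-energy scaling), which yields an interior minimizer $\vec a_0$.

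The crux is to promote $q^*:=q_{\vec a_0}^*$ to a classical, collision-free periodic solution. Interior regularity on $(0,T)$ is standard since $U$ is smooth off the collision set, with Euler--Lagrange equation equal to Newton's equation. The natural boundary conditions come from the outer optimality: each $\partial_{a_i}\mathcal A(\vec a_0)=0$, combined with the envelope identity $\partial_{a_i}\mathcal A(\vec a)=\langle M\dot q^*(T),\partial_{a_i}q(T)\rangle-\langle M\dot q^*(0),\partial_{a_i}q(0)\rangle$, provides six independent linear conditions on the endpoint momenta. Reading these against the explicit parametrizations of $Qstart$ (isosceles triangle with interior vertex on the symmetry axis) and $Qend$ (trapezoid rotated by $\theta$) forces $\dot q^*(0)$ to be compatible with the reflection $B$ appearing in item~3, and $\dot q^*(T)$ to be compatible with the trapezoidal reflection followed by rotation. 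These are precisely the matching conditions that make reflection across $t=0$ and the reflection-rotation-permutation map at $t=T$ both extend $q^*$ to a $C^1$, and hence $C^\infty$, solution of Newton's equation across the boundaries.

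With the matching established, the periodic extension is built in stages: reflecting $q^*$ about $t=0$ using $B$ and about $t=T$ using the trapezoid symmetry combined with a cyclic permutation of the four bodies and a rotation by $\theta=2\pi/5$ produces an $8T$-block; iterating this block five times closes the orbit at $t=40T$ because $5\theta=2\pi$. Non-collision of the global orbit reduces to that of $q^*$ on $[0,T]$, already established. Items~1--4 then follow directly from the construction; in particular item~2 is a consequence of the cyclic permutation in the gluing map, and item~4 is built into the shape of the SPBC. Item~5 (linear stability) is verified by computing the Floquet multipliers of the Hamiltonian flow linearized along $q(t)$: after quotienting by translations and rotations one checks that all multipliers lie on the unit circle in the required symplectic conjugate pairs, a computation done numerically with rigorous interval arithmetic.

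The principal obstacle is ruling out collisions for the inner minimizer $q_{\vec a}^*$ uniformly as $\vec a$ varies near $\vec a_0$. I would handle this by a Marchal-type averaging argument showing that interior collisions are avoided by any local action minimizer over open sets of boundary data, combined with an explicit collision-free comparison path adapted to the SPBC whose action falls below the Sundman--Sperling lower bound for any path ending in, or crossing, a boundary or total collision, in the spirit of \cite{CM}. A secondary delicate point is the surjectivity of the boundary-variation map onto the space of admissible endpoint velocity perturbations, which is what allows the six natural boundary conditions to pin down the endpoint momenta in exactly the form required for smooth periodic extension; here the linear independence of the boundary directions $\partial_{\vec a}q(0)$ and $\partial_{\vec a}q(T)$ is immediate from the explicit matrix form of the SPBC.
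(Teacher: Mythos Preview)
Your overall architecture matches the paper's: a two-level minimization (inner over $H^1$ paths with fixed endpoints, outer over the six-dimensional SPBC parameter $\vec a$), natural boundary conditions at the outer optimum via the first-variation identity, and extension to a periodic orbit by reflection/permutation/rotation. The paper organizes this as separate theorems on noncollision, equivalence with minimizing over the linear subspaces $\mathbf{A},\mathbf{B}$, extension, and stability, but the logical skeleton is the same as yours.

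There is, however, a genuine gap in how you obtain the outer minimizer. You assert that $\mathcal A(\vec a)$ blows up as $\vec a$ approaches a collision configuration of $Qstart$ or $Qend$, so that restricting to the collision-free open domain yields an interior minimum. This is false: a path starting (or ending) at a binary collision can have finite action---the Kepler ejection orbit does---so $\mathcal A$ stays bounded as $\vec a$ tends to a collision boundary. The paper therefore minimizes over \emph{all} of $\mathbf{R}^6$ (coercivity at infinity is Proposition~2.2, proved from the SPBC geometry), and then separately shows (Theorem~2.3) that the minimizer $\vec a_0$ cannot have a collision endpoint. That argument is not a global Sundman--Sperling lower bound as you suggest, but a local deformation: near a putative boundary collision one replaces the ejection orbit by a nearby collision-free path that still satisfies the SPBC (the free parameters in $\vec a$ are exactly what allow this) and has strictly smaller action. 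Your last paragraph gestures at such a comparison, but it is inconsistent with your earlier blow-up claim; you need to drop the latter and carry out the local deformation explicitly.

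A second omission: nothing in your outline rules out the possibility that the two-level minimizer is the trivial rigid circular motion of the square (which also satisfies the SPBC), in which case the minimum period is $40T/3$, not $40T$, and items~2 and~5 fail. The paper handles this by an explicit action comparison: the action of the circular arc over $[0,T]$ is computed in closed form and shown to exceed the action of a concrete piecewise-linear test path connecting specific SPBC endpoints, so the global minimizer is strictly below the circular level. You should include such a comparison.
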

\begin{remark}
 Numerically, if $T=1$ and $ \theta=\frac{2\pi}{5}$, then $\vec{a}_0 =[  1.0598738926379,$ $   1.7699901770118,$  $  0.80951135793043,$ $  0.75377929101531,$ $  1.1034410399611,$ $   2.440248251576]$ and  the initial conditions are given in Figure \ref{fig1}. 
 The star pentagon forms by assembling out the initial four pieces of curves starting from $q(0)$ to $q(T)$ (dashdotted line in Figure \ref{fig1}). The extension to the full star pentagon is done by reflection, permutation and rotation as in equation \eqref{qet}.
\end{remark}

Our main theorem for $\theta =\frac{2\pi}{5}$ can also be extended for some other $\theta$. 
\begin{theorem}\label{Main2}
There exist $\theta_0$ and $\theta_1$ such that  $\theta_0 <\frac{\pi}{2}<\theta_1$. For any $\theta\in (\theta_0, \theta_1)$ and $\theta\not=\frac{\pi}{2}$, there exists at least one $\vec{a}_0\in \mathbf{R}^6$ such that the minimizing path $q^*(t)$ connecting $q(0)$ and $q(T)$ can be extended to a non-circular classical Newtonian solution by assembling out the initial pieces. If $\theta$ is not commensurable with $\pi$, the extension is a quasi-periodic solution. If $\theta$ is commensurable with $\pi$, the extension is a periodic solution. Among the periodic solutions, there are infinitely many  choreographic periodic solutions and many of them are linearly stable (see Figure \ref{fig9} to Figure \ref{fig10} in appendix C).
\end{theorem}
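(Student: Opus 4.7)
The plan is to upgrade the existence argument at the distinguished angle $\theta=2\pi/5$ from Theorem~\ref{main} to an open range of $\theta$ by continuation. For each $\vec{a}\in\mathbf{R}^6$ and each admissible $\theta$, one sets up the Lagrangian action $\mathcal{A}(q;\vec{a},\theta)$ on the space of $H^1$ paths from $q(0;\vec{a})$ to $q(T;\vec{a},\theta)$; the inner minimizer $q^*(\cdot;\vec{a},\theta)$ produces the reduced action $F(\vec{a},\theta):=\mathcal{A}(q^*;\vec{a},\theta)$. Critical points $\vec{a}_0=\vec{a}_0(\theta)$ of $F(\cdot,\theta)$ are precisely those for which the inner minimizer extends smoothly via the assembly rule~\eqref{qet}: by the envelope formula, $\nabla_{\vec{a}}F$ is a combination of the boundary momenta of $q^*$ with the derivatives $\partial q(0)/\partial\vec{a}$ and $\partial q(T)/\partial\vec{a}$, and its vanishing records exactly the velocity-matching conditions required for the reflected, permuted, and rotated concatenation to be $C^1$ at every junction $t=kT$.

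Second, I would show that $\vec{a}_0(\theta)$ persists on a connected open interval around $2\pi/5$ and on one around some $\theta$ near $\pi/2$. Theorem~\ref{main} supplies an isolated critical point at $\theta=2\pi/5$; combined with the continuity of $q^*(\cdot;\vec{a},\theta)$ in $(\vec{a},\theta)$ on the collision-free open set (standard by uniform coercivity of the Newtonian action and the direct method), this gives, via persistence of min-max critical values or an implicit-function argument at a non-degenerate critical point, a continuous branch $\vec{a}_0(\theta)$. The value $\theta=\pi/2$ is excluded because there $q(0)$ and $q(T)$ coincide modulo the symmetry and the assembled orbit collapses onto a circular relative equilibrium, exactly the trivial choreography the theorem excludes. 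Defining $\theta_0$ and $\theta_1$ as the extreme values at which either (i) the inner minimizer first encounters a collision or (ii) $F(\cdot,\theta)$ first fails to admit an isolated critical point produces the advertised interval $(\theta_0,\theta_1)$ with $\theta_0<\pi/2<\theta_1$.

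Third, once $\vec{a}_0(\theta)$ is constructed, the extension is algebraic. Let $\sigma_\theta$ denote the composite isometry consisting of reflection across the $y$-axis, cyclic permutation of the four bodies, and rotation by $\theta$; one concatenates the base piece on $[0,T]$ with $\sigma_\theta$ applied to the time-reversed piece on $[T,2T]$, and so on, exactly as at $\theta=2\pi/5$ in Theorem~\ref{main}. If $\theta=2\pi p/q$ is commensurable with $\pi$, then after finitely many steps the accumulated rotation returns to the identity and the orbit closes into a periodic solution; for the infinite subfamily of $p/q$ for which the cyclic permutation of labels closes in sync with a single-body traversal, the orbit is simple choreographic. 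If instead $\theta/\pi$ is irrational, the iterated rotations are dense on the circle, the orbit never closes, and it fills a two-torus generated by $T$ and $\theta$, yielding a quasi-periodic solution.

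The main obstacle I expect is uniform collision-freeness and non-degeneracy of $F(\cdot,\theta)$ as $\theta$ sweeps across the interval. Binary, triple, and total collisions of the inner minimizer can be ruled out by the level-set method---comparing the action on any putative colliding minimizer with that of an explicit collision-free test path, following \cite{CM,FT}---but the bounds must be made uniform in $\theta$ to pin down $\theta_0$ and $\theta_1$ quantitatively. Non-degeneracy of $F(\cdot,\theta)$ is the more delicate point: it is inherited from the $\theta=2\pi/5$ case via continuity only in a small neighborhood, so extending it across the full declared interval will either demand quantitative Hessian bounds or a direct verification that $F(\cdot,\theta)$ remains coercive with attained infimum at each $\theta$. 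The linear stability assertion for the choreographic specializations is numerical; it is checked by computing the monodromy of the linearized flow along each assembled orbit (as illustrated in Figures~\ref{fig9}--\ref{fig10}) and does not follow from the variational construction itself.
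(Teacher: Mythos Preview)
Your continuation strategy differs substantially from the paper's argument and, as written, does not close. The paper does \emph{not} continue the critical point $\vec{a}_0(2\pi/5)$ in $\theta$. Instead it works at each fixed $\theta$ independently: (i) the coercivity of Proposition~\ref{PROP:1} holds for every $\theta\in(0,\pi)\setminus\{\pi/4,\pi/2,3\pi/4\}$ because $\mathbf{A}\cap\mathbf{B}$ is trivial there (Remark~\ref{remark2}(3)), so a global minimizer of $\tilde{\mathcal A}$ exists outright; (ii) the collision-exclusion argument of Theorem~\ref{Thm:NC} and the velocity-matching extension of Theorem~\ref{Thm:ET} go through verbatim for all $\theta$; and (iii) to certify that the minimizer is non-circular, the paper compares two \emph{explicit} continuous functions of $\theta$: the circular action $\mathcal{A}(q^\circ)=3\cdot 2^{-1/3}U_0^{2/3}T^{1/3}\,|\theta-\pi/4|^{2/3}$ and the action $\mathcal{A}(\bar q)$ of the straight-line test path at the same fixed numerical $\vec a$ used for $\theta=2\pi/5$. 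The interval $(\theta_0,\theta_1)$ is then simply $\{\theta:\mathcal{A}(\bar q)(\theta)<\mathcal{A}(q^\circ)(\theta)\}$, read off from these two curves, and it visibly straddles $\pi/2$. No non-degeneracy, no implicit function theorem, no branch-tracking is needed.

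Your plan has two concrete gaps. First, implicit-function continuation from $\theta=2\pi/5$ requires non-degeneracy of the Hessian of $F(\cdot,2\pi/5)$, which you have not established, and even granting it you obtain only a small open neighborhood of $2\pi/5\approx0.4\pi$; nothing in your outline pushes that neighborhood across $\pi/2$, which the statement demands. You flag this yourself, and the fallback you mention---direct coercivity and attainment at each $\theta$---is precisely what the paper does, rendering the continuation apparatus superfluous. Second, your reason for excluding $\theta=\pi/2$ is not the operative one: the assembled orbit does not collapse to a relative equilibrium there; rather, $\mathbf{A}\cap\mathbf{B}$ becomes nontrivial and $\tilde{\mathcal A}$ loses coercivity (Remark~\ref{remark2}(3)), so the existence of a minimizer in $\Gamma$ is what fails. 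Your treatment of the periodic/quasi-periodic dichotomy via commensurability of $\theta$ with $\pi$, and your remark that the linear-stability claim is numerical, do match the paper.
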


\begin{remark}\label{rem2}
 The value of the action of the minimizing solution $q^*(t)$ is smaller than the action of the corresponding circular motion for $\theta\in(\theta_0,\theta_1)$. Numerically, $1.1938<\theta_0<1.2252$ or equivalently $0.38\pi<\theta_0<0.39\pi$. $1.7279<\theta_1<1.7593$ or equivalently $0.55\pi<\theta_1<0.56\pi$. There also exist local minimizers which have higher actions than their circular solutions. 
  By using canonical transformation, we eliminate the trivial $+1$ multipliers and we prove that the simple choreographic solutions  are linearly stable in the reduced space for $\theta=\frac{P}{2P+1}\pi$, $P=2,3,4,\cdots, 15$. The non-choreographic solutions  are linearly stable for $\theta=\frac{2P-1}{4P},$ $P=3,4,\cdots,8$. The double choreographic solutions $q(t)$ are also linearly stable for $\theta=\frac{2P-1}{4P+2},$ $P=5,6, 7$. From our calculation and numerical simulation program, periodic solutions with lower actions for $\theta\in (\theta_0,\theta_1)$ seem more likely stable. Periodic solutions for $\theta$ out of $(\theta_0,\theta_1)$ are more likely unstable. For example, we check that periodic solutions are linearly unstable for $\theta=\frac{1}{4}\pi,$ $\frac{1}{3}\pi,$ $\frac{3}{8}\pi,$ $\frac{3}{10}\pi$, $\frac{4}{11}\pi$,  $\frac{5}{14}\pi$ and so on.  Our theorem \ref{Thm:51} and numerical computation supports the following conjecture. To prove the conjecture, some new techniques may be involved such as index theory (see \cite{HLS}, \cite{YL}, \cite{DO})\\

{\bf \Large Conjecture:} The non-circular periodic solutions in theorem \ref{Main2} are all linearly stable for $\theta\in (\theta_0,\theta_1)$ and $\theta\not=\frac{\pi}{2}$ and there are  infinitely many stable choreographic solutions.
\end{remark}

Our paper is organized in the following manner. In section \ref{sec2}, we first briefly describe the variational method with structural prescribed boundary conditions for the special case $\theta=\frac{2\pi}{5}$ and equal masses. The main theorem \ref{main} is restated as theorem \ref{Thm:NC} to \ref{Thm:LS}. Its proof is carried out in section \ref{sec3} and  the linear stability is studied in section \ref{sec4}. Section \ref{sec5} is devoted to the main theorem \ref{Main2} with more details on the classification of periodic solutions with respect to the general rotational angle $\theta$. The calculations of the action of a path to generate a circular motion or the action of a test path are given in appendix A and appendix B respectively. Finally, some numerical simulations are given in appendix C.

\section{Settings and Restatements of Main Theorem \ref{main}}\label{sec2}

Given $n$ bodies, let $m_i$ denote the mass and $q_i(t)$ denote the position in $\mathbf{R}^d, d\geq 2$ of body $i$ at time $t$ in $d$-dimensional space. The {\it action functional} is a mapping from the space of all trajectories $q_1(t), q_2(t),\cdots, q_n(t)$ into the reals.  It is defined as the integral:
\begin{equation}\label{Min}
\mathcal{A}(q(t))=\int_{0}^{T}  \frac{1}{2}\sum_{i=1}^{n} m_i\|\dot{q}_i(t)\|^2+U(q(t))dt,
\end{equation}
where $U$ is the Newtonian
potential function
$U=\sum_{1\leq i<j\leq n}\frac{m_i m_j}{|{q}_i-{q}_j|}.$
Critical points of the action functional are trajectories that satisfy the equations of motion, i.e. Newton's equations:
\begin{equation}\label{Newton}
m_i\ddot{q}_i=\frac{\partial U}{\partial q_i}=\sum_{j=1,j\not= i}^{n} \frac{m_im_j(q_j-q_i)}{|q_j-q_i|^3} \hspace{1cm} 1\leq i\leq n.
\end{equation}
Without loss of generality, we assume that the center of mass $\bar{q}=(1/M)\sum_{i=1}^{n}m_iq_i$ is always at the origin. Let $p_i=m_i\dot{q}_i$. Then the Hamiltonian governing the equations of motion is $H(q,p)=\sum_{i=1}^{n}|p_i|^2/(2m_i) -U$. \\

By the fundamental theorem of existence and uniqueness of differential equations, the second order nonlinear ODE system of  Newton's equations has a unique solution for an appropriate initial conditions, i.e. the initial position vector $q(0)$ and the initial velocity vector $\dot{q}(0)$ determine its future motion. But the initial conditions are very local and it is very hard to directly determine the initial conditions to lead a periodic solution. Instead of considering the initial value problem, we consider the boundary value problem
with appropriate prescribed boundary configurations on $q(0)$ and $q(T)$ so that the orbit connecting $q(0)$ and $q(T)$ can be extended to a periodic solution. Here we consider two appropriate boundary configurations $Qstart \in (\mathbf{R}^d)^n$ and $Qend \in (\mathbf{R}^d)^n$, and the path space
 \begin{equation}
 \mathcal{P}(Qstart,Qend):=\{q(t)\in H^1([0,T], (\mathbf{R}^d)^n)\quad {\big | }\quad  q(0)=Qstart, q(T)= Qend\}.
 \end{equation}

A natural choice of the path space for the action functional $\mathcal{A}$ defined in \eqref{Min} is the Sobolev space $H^1([0,T],(\mathbf{R}^d)^n)$, in which a critical point $q(t)$ of $\mathcal{A}$ is a classical solution of Newton's equation \eqref{Newton}  on $[0,T]$, if and only if $q$ is collision free. The existence of minimizers in the Sobolev space is classic and standard. But the assertion of collision free for the boundary value problem is proved by Chenciner \cite{CA2} and Marchal \cite{Ma1} in 2002. Such ideas had been also reported by one of the authors, Tiancheng Ouyang, in Guanajuato (Hamsys, March 2001, see Chenciner's Remark in \cite{CA2}).

\begin{lemma}\label{MaC}
 Given any $Qstart \in (\mathbf{R}^d)^n$ and $Qend\in (\mathbf{R}^d)^n$, minimizers of $\mathcal{A}$ on the space $\mathcal{P}(Qstart,Qend)$ are collision-free on the interval $(0,T)$.
\end{lemma}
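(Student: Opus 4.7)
The plan is to follow the Marchal--Chenciner local-variation argument. Existence of a minimizer $q^{*}\in\mathcal{P}(Qstart,Qend)$ is standard: $\mathcal{A}$ is coercive on $H^{1}([0,T],(\mathbf{R}^{d})^{n})$ because the fixed boundary data force coercivity of the kinetic term in the $H^{1}$ norm, and it is weakly lower semicontinuous since $U\ge 0$ and Fatou applies on weakly $H^{1}$-convergent, a.e.\ convergent subsequences. At any $t\in(0,T)$ where $q^{*}$ is collision-free the Euler--Lagrange equations for $\mathcal{A}$ are exactly Newton's equations \eqref{Newton}, so the entire problem reduces to ruling out the collision set $\Sigma:=\{t\in(0,T):q_{i}^{*}(t)=q_{j}^{*}(t)\text{ for some }i\ne j\}$.

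Suppose toward contradiction that $t_{0}\in\Sigma$, let $I\subseteq\{1,\ldots,n\}$ be a maximal cluster colliding at $t_{0}$, and pick a single body $i\in I$. Choose $\delta>0$ so that $[t_{0}-\delta,t_{0}+\delta]\subset(0,T)$ and no body outside $I$ collides with one in $I$ on this interval. Fix a Lipschitz tent function $\chi\colon[0,T]\to[0,1]$ supported in $[t_{0}-\delta,t_{0}+\delta]$ with $\chi(t_{0})=1$, and for $\xi\in\mathbf{R}^{d}$ define the competitor $q^{\xi}$ by $q^{\xi}_{k}=q^{*}_{k}$ for $k\ne i$ and $q^{\xi}_{i}(t)=q^{*}_{i}(t)+\chi(t)\xi$. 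Then $q^{\xi}\in\mathcal{P}(Qstart,Qend)$ and by minimality $\xi=0$ minimizes $\Phi(\xi):=\mathcal{A}(q^{\xi})$. The key step is to average $\Phi$ over the small sphere $|\xi|=\epsilon$ and show this average is strictly less than $\Phi(0)$; this immediately produces some $\xi_{0}$ with $\Phi(\xi_{0})<\mathcal{A}(q^{*})$, contradicting minimality.

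The averaging decomposes as follows. The kinetic part of $\Phi(\xi)-\Phi(0)$ is the exact quadratic $\tfrac{1}{2}m_{i}|\xi|^{2}\int|\dot{\chi}|^{2}\,dt$, whose spherical average is $K\epsilon^{2}$ with $K>0$ fixed. Pairs $(i,j)$ whose mutual distance stays uniformly positive on $\operatorname{supp}(\chi)$ contribute an $O(\epsilon^{2})$ smooth correction by Taylor expansion. The dangerous pairs are $(i,j)$ with $j\in I\setminus\{i\}$, where one uses Marchal's mean-value inequality: the spherical average of $1/|x-\xi|$ over $|\xi|=\epsilon$ is strictly smaller than $1/|x|$ whenever $|x|<\epsilon$ (in $d\ge 3$ this is the mean-value theorem for the harmonic or superharmonic function $1/|x|$ applied across its singularity; in $d=2$ it follows from the explicit elliptic-integral formula for the circular mean). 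Combined with the Sundman--Sperling bound $|q_{i}^{*}(t)-q_{j}^{*}(t)|\le C|t-t_{0}|^{2/3}$, which holds because finiteness of action near a collision forces the colliding cluster to approach a central configuration at this rate, the set of times at which $|q_{i}^{*}(t)-q_{j}^{*}(t)|\le\chi(t)\epsilon$ has measure at least $c\,\epsilon^{3/2}$, and on it the averaged potential drops by an amount of order $1/\epsilon$, giving a total average decrease of the potential of order at least $\epsilon^{1/2}$. For $\epsilon$ small, this strictly dominates $K\epsilon^{2}$.

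The main obstacle is turning this heuristic into a bound that is simultaneously uniform in all colliding pairs within $I$ and uniform in $\epsilon$: one must invoke the full Sundman--Sperling decomposition of the entire colliding subsystem (central configuration plus controlled correction) to bound all internal mutual distances at the required rate and to guarantee that the averaging decrease dominates both the quadratic kinetic term and the $O(\epsilon^{2})$ smooth cross-terms from non-colliding interactions. Once this is in place, the existence of a competitor $q^{\xi_{0}}$ with strictly smaller action contradicts the minimality of $q^{*}$; hence $\Sigma=\emptyset$ and $q^{*}$ is a classical solution of \eqref{Newton} on $(0,T)$.
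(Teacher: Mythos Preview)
The paper does not prove this lemma; it is stated as a known result and attributed to Chenciner and Marchal. Your proposal correctly reproduces the spherical-averaging version of the Marchal--Chenciner argument: perturb one body of the colliding cluster by $\chi(t)\xi$, average the action over the sphere $|\xi|=\epsilon$, and show the averaged potential drop beats the $O(\epsilon^{2})$ kinetic cost. That is exactly the approach in the cited references. (For comparison, when the paper later handles collisions at the \emph{endpoints} in the proof of Theorem~\ref{Thm:NC}, it uses the related but distinct direct-deformation variant---replacing the ejection orbit by an explicit collision-free path along a single fixed direction $\vec{s}$ with a piecewise-linear cutoff---rather than averaging over a sphere.)

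One logical point in your sketch deserves care. You invoke the Sundman--Sperling bound $|q_{i}^{*}(t)-q_{j}^{*}(t)|\le C|t-t_{0}|^{2/3}$ and justify it by saying that ``finiteness of action near a collision forces the colliding cluster to approach a central configuration at this rate.'' But the Sundman--Sperling asymptotics are theorems about \emph{solutions} of Newton's equations, not about arbitrary finite-action $H^{1}$ paths; finite action alone does not yield the $t^{2/3}$ rate. The rigorous order of business is first to show that the collision set of a minimizer is isolated and that $q^{*}$ is a classical solution on each punctured neighborhood of a collision time (this uses only interior variations and standard partial-regularity arguments), and \emph{then} to import the $t^{2/3}$ asymptotics from the ODE side. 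You essentially concede this in your last paragraph, but be aware that the intermediate regularity step is a genuine ingredient, not a consequence of finite action.
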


Minimizers of $\mathcal{A}$ are classic solutions of the Newton's equation \eqref{Newton} and it is $C^2$ in $\mathcal{P}($ $Qstart, $ $Qend)$. Because linear momentum is an integral of motion, it is natural to assume that every path stays inside the configuration space $W$:
\begin{equation}
W:=\left\{ q\in(\mathbf{R}^d)^n \quad {\big | }\quad \sum_{i=1}^n m_i q_i=0\right\}.
\end{equation}
If $Qstart$ and $Qend$ are both in $W$, then the minimizers are always in $W$.

We will present the new variational approach with the SPBC in the discovery of the stable star pentagon choreographic solution of the planar four-body problem with equal masses. The results can be extended for general $\theta$ in section \ref{sec5}. The method can be applied for unequal masses \cite{OuXie}  and in three dimensional space \cite{OuYan}. In section \ref{sec2} to \ref{sec4}, $\theta=\frac{2\pi}{5}$, $m_1=m_2=m_3=m_4=1$ and dimension $d=2$. The essential part of the new method is the choice of an appropriate SPBC in order to get a possible preassigned periodic orbit. 

\begin{quote} {\bf \Large Structural Prescribed Boundary Conditions (SPBC):} \\
Let $\Gamma=\mathbf{R}^6$. The fixed $Qstart$ and the fixed $Qend$ are defined by $Qstart= \left( \begin{array}{cc} a_1 & a_2\\
0 & -a_3\\ -a_1 & a_2\\
0 & -2a_2+a_3 \end{array} \right)$ and $Qend= \left( \begin{array}{rr} -a_5 & a_4\\
a_5 & a_4\\ -a_6 & -a_4\\
a_6 & -a_4 \end{array} \right) R(\theta)$ for a given $\vec{a}=(a_1,a_2,\cdots,a_6)\in \Gamma$. Then the set $S(\vec{a})$ of minimizers is defined by $$S(\vec{a})=\{ q(t) =(q_1, q_2, q_3, q_4)(t) \in C^2((0,T),(\mathbf{R^2})^4)\quad {\big | }\quad   q(0)=Qstart, q(T)=Qend,$$ $$ q(t) \hbox{ is a minimizer of the action functional } \mathcal{A} \hbox{ over } \mathcal{P}(Qstart,Qend) \}.$$
So the configuration of the bodies changes from an isosceles triangle with one on the axis of symmetry of the triangle to a trapezoid for some positive $\vec{a}$.\end{quote} 

 For any given $\vec{a}\in \Gamma$, the minimizers of $\mathcal{A}$ that connect $Qstart$ and $Qend$ are classical collision-free solutions in the interval $(0, T)$. 
 Then the real value function $\tilde{\mathcal{A}}(\vec{a}): \Gamma\rightarrow \mathbf{R}$ is well defined by
 \begin{equation}\label{VAR1}
 \tilde{\mathcal{A}}(\vec{a}) = \int_{0}^{T}  \frac{1}{2}\sum_{i=1}^{n} m_i\|\dot{q}_i(t,\vec{a})\|^2+U(q(t,\vec{a}))dt,
 \end{equation}
where $q(t,\vec{a})\in S(\vec{a})$  is a minimizer of the action functional $\mathcal{A}$ over $\mathcal{P}(Qstart,Qend)$ for the given $\vec{a}\in \Gamma$. If it is clear that $q(t,\vec{a})$ is a minimizer for the given $\vec{a}$ from context, we still use $q(t)$ for $q(t,\vec{a})$ for convenience. It is easy to know that $\tilde{\mathcal{A}}$ is lower semicontinuous on $\Gamma$.
The existence of minimizers in the finite dimension space $\Gamma$ is due to the following proposition.
 \begin{proposition}\label{PROP:1} 
 For $\theta=\frac{2\pi}{5}$, $\tilde{\mathcal{A}}(\vec{a})\rightarrow +\infty$ if $|\vec{a}|\rightarrow +\infty$.
  \end{proposition}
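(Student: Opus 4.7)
The plan is to bound the action from below by purely kinetic data at the endpoints, and then show that the boundary separation grows linearly with $|\vec{a}|$. Since $U\ge 0$, for any path $q\in\mathcal{P}(Qstart,Qend)$ the Cauchy–Schwarz inequality applied coordinate‑wise gives
\[
 |q_i(T)-q_i(0)|^2 \;=\; \Bigl|\int_0^T \dot q_i\,dt\Bigr|^2 \;\le\; T\int_0^T \|\dot q_i\|^2\,dt ,
\]
so with $m_1=\dots=m_4=1$,
\[
 \mathcal{A}(q) \;\ge\; \tfrac{1}{2}\int_0^T\sum_i\|\dot q_i\|^2\,dt \;\ge\; \frac{1}{2T}\,\|Qend-Qstart\|_F^{\,2}.
\]
Passing to the infimum over $\mathcal{P}(Qstart,Qend)$, which is realized by Lemma \ref{MaC}, yields
\[
 \tilde{\mathcal{A}}(\vec a)\;\ge\;\frac{1}{2T}\,\|Qend(\vec a)-Qstart(\vec a)\|_F^{\,2}.
\]

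The remaining task is to show that the linear map $L:\mathbb{R}^6\to(\mathbb{R}^2)^4$ defined by $L(\vec a)=Qend(\vec a)-Qstart(\vec a)$ is injective, so that $\|L(\vec a)\|_F^{\,2}\ge C(\theta)\,|\vec a|^2$ for some $C(\theta)>0$. Setting $c=\cos(2\pi/5)$, $s=\sin(2\pi/5)$, one writes out the eight scalar equations in $L(\vec a)=0$. The $x$-components of rows $2$ and $4$ give $a_5c+a_4s=0$ and $a_6c-a_4s=0$, so $a_5=-a_6$ and $a_4=a_6 c/s$. Substituting into the $y$-components of rows $1$ and $3$ produces the pair $a_2 = a_6(c^2-s^2)/s = -a_6(c^2-s^2)/s$, forcing $a_2=0$ and $a_6(c^2-s^2)=0$. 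Because $\theta=2\pi/5$ satisfies $c^2\neq s^2$, we get $a_6=0$, hence $a_4=a_5=0$; the remaining equations then give $a_1=a_3=0$. Thus $\ker L=\{0\}$ and $L$ is injective.

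Combining the two steps, $\tilde{\mathcal{A}}(\vec a)\ge \frac{C(\theta)}{2T}|\vec a|^2$, which diverges to $+\infty$ as $|\vec a|\to+\infty$. The main (mild) obstacle is the injectivity check for $L$, and it is exactly the arithmetic identity $\cos^2(2\pi/5)\neq\sin^2(2\pi/5)$ that makes the argument work at $\theta=2\pi/5$; for a general $\theta$ one would need the analogous condition $\theta\neq\pi/2$ together with a corresponding rank computation, which is why the statement of Theorem \ref{Main2} must exclude $\theta=\pi/2$.
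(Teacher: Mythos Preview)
Your proof is correct and, in fact, cleaner than the paper's. Both arguments begin with the same kinetic lower bound
\[
\tilde{\mathcal{A}}(\vec a)\ \ge\ \tfrac{1}{2T}\,\|Qend(\vec a)-Qstart(\vec a)\|_F^{\,2},
\]
but they diverge in how they show the right-hand side blows up. The paper argues geometrically: assuming $\tilde{\mathcal{A}}$ stays bounded while $|\vec a|\to\infty$, it claims all endpoint vectors $q_i(0),q_i(T)$ must diverge, and then uses the angles $\alpha_i=\angle(q_i(0),q_i(T))$ together with inequalities \eqref{prop2}--\eqref{prop3} to force $\alpha_i\to 0$, which it rules out case by case from the SPBC structure. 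Your route is purely linear-algebraic: since $Qstart$ and $Qend$ depend on disjoint triples $(a_1,a_2,a_3)$ and $(a_4,a_5,a_6)$, the map $L(\vec a)=Qend-Qstart$ is linear, and you verify $\ker L=\{0\}$ directly. This yields the quantitative coercivity $\tilde{\mathcal{A}}(\vec a)\ge \tfrac{C(\theta)}{2T}|\vec a|^2$, which is stronger than what the paper states, and avoids the somewhat informal ``all $q_i(0)$ and $q_i(T)$ go to infinity'' step in the paper's argument.

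Two minor remarks. First, your appeal to Lemma~\ref{MaC} for realizing the infimum is a slight mis-citation (that lemma concerns collision-freeness, not existence); the lower bound holds for every path, hence for the infimum, regardless. Second, your closing comment that the general-$\theta$ version needs only $\theta\neq\pi/2$ understates the obstruction: your own computation uses $c\neq 0$, $s\neq 0$, and $c^2\neq s^2$, so $\ker L\neq\{0\}$ also at $\theta=\pi/4$ and $\theta=3\pi/4$. This is exactly what the paper records in Remark~\ref{remark2}(3), where the coercivity is asserted for $\theta\in(0,\pi)\setminus\{\pi/4,\pi/2,3\pi/4\}$ via the equivalent condition $\mathbf{A}\cap\mathbf{B}=\{0\}$.
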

  \begin{proof}
  \begin{figure}
\includegraphics[height=6cm,width=.45\textwidth]{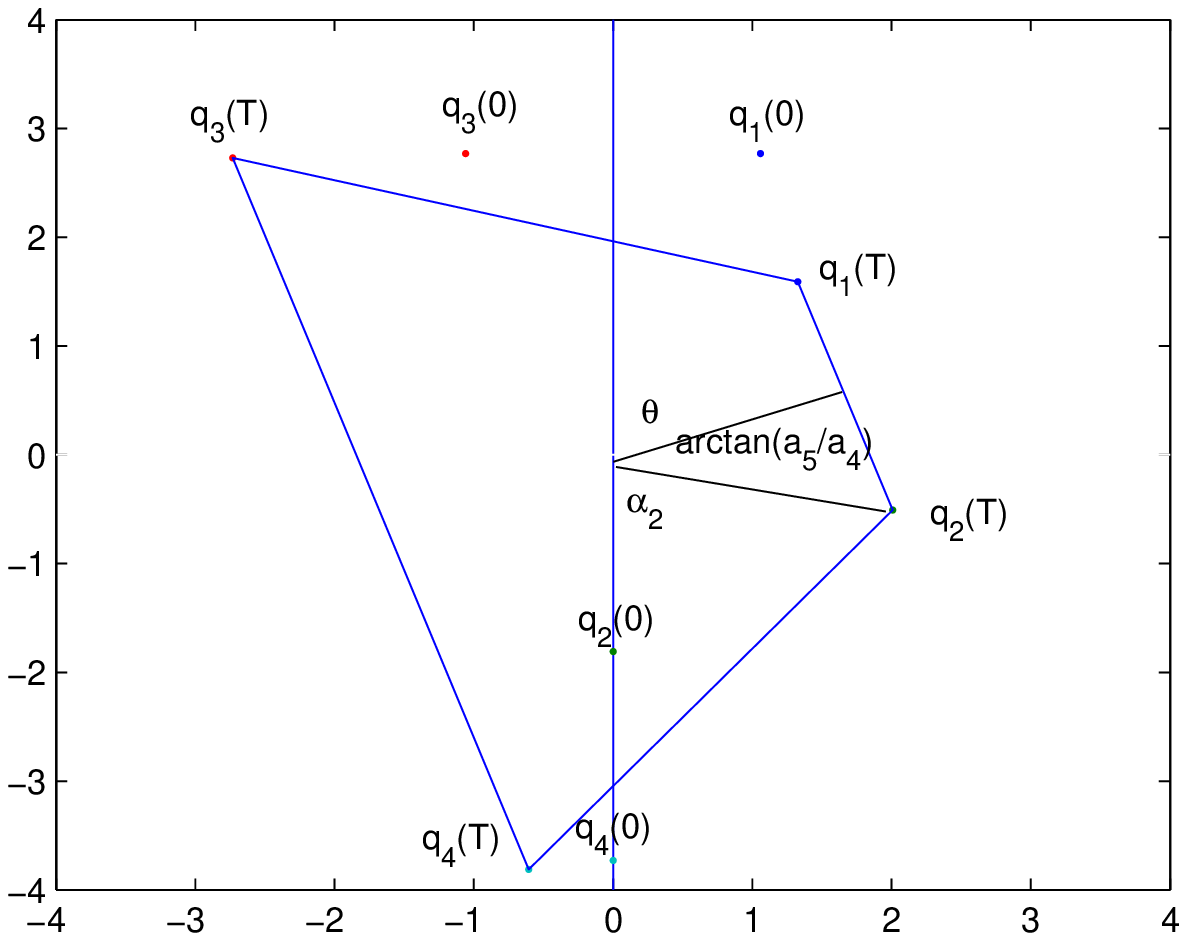}
\includegraphics[height=6cm,width=.45\textwidth]{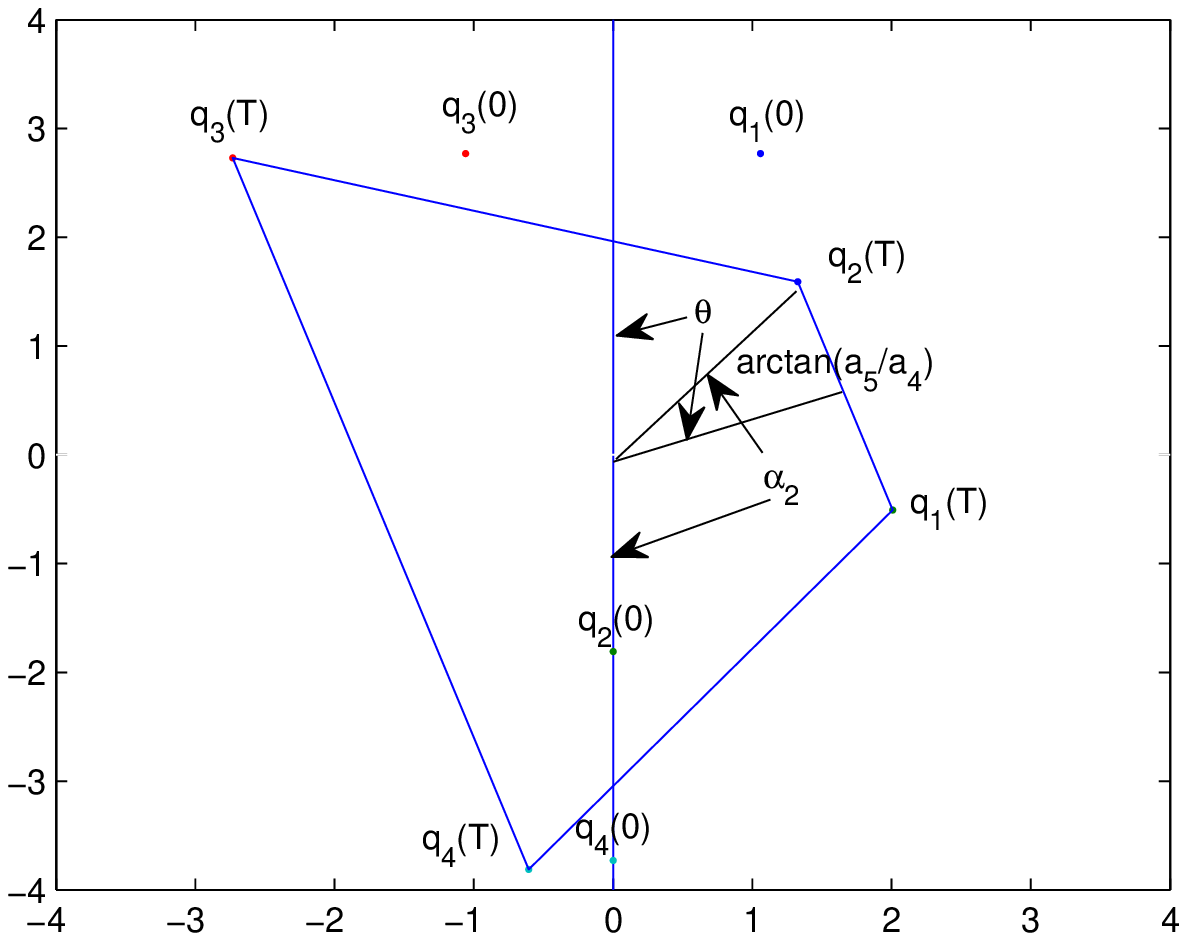}
\caption{ Left: If $\vec{a}> 0$, $0<\alpha_2=\pi-\theta-\arctan(\frac{a_5}{a_4})<\pi$. Right: $a_5<0$ and $a_i>0$ for $i\not=5$, $\frac{\pi}{2}<\alpha_2=\pi-\theta+\arctan(\frac{a_5}{a_4})$.}\label{prop_1}\end{figure}

  For any $\vec{a}\in \Gamma$,
  $$\tilde{\mathcal{A}}(\vec{a}) \geq \sum_{i=1}^{n} \int_{0}^{T}  \frac{1}{2} m_i\|\dot{q}_i(t,\vec{a})\|^2 dt\geq \sum_{i=1}^{n}\frac{1}{2}m_i \left\|\int_{0}^{T} \dot{q}_i(t,\vec{a})dt \right\|^2=\sum_{i=1}^{n}\frac{1}{2}m_i \left\| q_i(T)-q_i(0)\right\|^2.$$ By triangle inequality we have
  \begin{equation}\label{prop1}
\tilde{\mathcal{A}}(\vec{a}) \geq \sum_{i=1}^{n}\frac{1}{2}m_i (\|q_i(T)\|-\|q_i(0)\|)^2
\end{equation}
 \begin{equation}\label{prop2}
 \tilde{\mathcal{A}}(\vec{a}) \geq \sum_{i=1}^{n}\frac{1}{2}m_i |\max\{\|q_i(T)\|\sin(\alpha_i),\|q_i(0)\|\sin(\alpha_i)\}|^2,
 \end{equation}
  \begin{equation}\label{prop3}
 \tilde{\mathcal{A}}(\vec{a}) \geq \frac{1}{2}m_i |\max\{\|q_i(T)\|,\|q_i(0)\|\}|^2, \hbox{ if } \frac{\pi}{2}\leq \alpha_i\leq \pi
 \end{equation}
  where
 $\alpha_i$ is the angle between the vectors $q_i(0)$ and $q_i(T)$.  If $\tilde{\mathcal{A}}(\vec{a})$ remains finite while $|\vec{a}|$ goes to infinity, then $q_i(0)$ and $q_i(T)$ must go to infinity for all $i$ by the structure of the SPBC and the inequality \eqref{prop1}. Then  $\sin(\alpha_i)$ goes to zero for all $i$ by the inequality \eqref{prop2} and \eqref{prop3}, that is $\alpha_i=0$. But that $\alpha_i=0$ for all $i$ is impossible due to the structure of the SPBC. For example, if $a_i\geq 0$ for all $i$, the angle  $\alpha_2$  would be $\pi-\theta-\arctan(\frac{a_5}{a_4})$ (see Left in figure \ref{prop_1}). Therefore $0<\frac{\pi}{2}-\theta\leq \alpha_2\leq \pi-\theta<\pi$ and $\sin(\alpha_2)\not=0$. Other cases can be easily obtained by the geometric structure of the SPBC and detail arguments are omitted.

\end{proof}

Let $\vec{a}_0=(a_{10},a_{20},\cdots,a_{60})\in \Gamma$ be a minimizer of $\tilde{\mathcal{A}}(\vec{a})$ over the space $\Gamma$ and the corresponding path $q^*(t)=q^*(t,\vec{a}_0)\in S(\vec{a}_0)$, i.e.
  \begin{equation}\label{VAR}
  \begin{array}{ll}
  \tilde{\mathcal{A}}(\vec{a}_0) &= \min_{\vec{a}\in \Gamma} \tilde{\mathcal{A}}(\vec{a}) =\min_{\vec{a}\in\Gamma}\left\{\inf_{q(t)\in \mathcal{P}(Q_{start},Q_{end})} \mathcal{A}(q(t))\right\}\\
  \\
  &=\min_{\vec{a}\in\Gamma}\left\{\inf_{q(t)\in \mathcal{P}(Q_{start},Q_{end})}\int_{0}^{T}  \frac{1}{2}\sum_{i=1}^{n} m_i\|\dot{q}_i(t)\|^2+U(q(t))dt\right\}. \end{array}
 \end{equation}
   Then the path $q^*$  is the solution we want and Theorem \ref{main} can be proved immediately by the following theorems.


\begin{theorem}[Noncollision]\label{Thm:NC}
 Let $\vec{a}_0$ be a minimizer of $\tilde{\mathcal{A}}(\vec{a})$ over the space $\Gamma$ and the corresponding path $q^*(t)\in S(\vec{a}_0)$. Then $q^*$ satisfying SPBC  is a classical solution of Newton's equation \eqref{Newton} in the whole interval $[0, T]$.
\end{theorem}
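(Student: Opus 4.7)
The plan rests on three ingredients: the coercivity of $\tilde{\mathcal{A}}$ on $\Gamma$ (Proposition \ref{PROP:1}), the Chenciner--Marchal Lemma \ref{MaC} applied fiber-wise at the inner level, and the fact that $\vec{a}_0$ is an \emph{interior} minimizer of $\tilde{\mathcal{A}}$ on the unconstrained space $\mathbf{R}^6$.

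First I would observe that Proposition \ref{PROP:1} together with the lower semicontinuity of $\tilde{\mathcal{A}}$ on $\Gamma$ — both already recorded in the paper — guarantees by the direct method that a minimizer $\vec{a}_0\in\Gamma$ is attained. For this $\vec{a}_0$ and the associated path $q^*(t)\in S(\vec{a}_0)$, Lemma \ref{MaC} is applied directly to the inner variational problem $\inf_{q\in\mathcal{P}(Qstart,Qend)}\mathcal{A}$ (with $Qstart,Qend$ determined by $\vec{a}_0$), which immediately yields that $q^*$ is free of collisions on the open interval $(0,T)$ and hence is a classical $C^2$ solution of Newton's equations \eqref{Newton} on $(0,T)$.

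The remaining, and main, step is to rule out collisions at the two endpoints $t=0$ and $t=T$. Suppose, toward a contradiction, that $Qstart$ corresponding to $\vec{a}_0$ exhibits a binary (or higher) collision; the SPBC is parameterized by \emph{all} of $\mathbf{R}^6$ with no constraint, so one can perturb $\vec{a}_0$ to $\vec{a}_0+\varepsilon\vec{v}$ in order to separate the colliding cluster. I would then run a one-sided Marchal-type local comparison at $t=0$: replace the piece of any competitor path on a short interval $[0,\delta]$ with the rescaling of an averaged noncolliding path joining the separated boundary to $q^*(\delta)$, and show that the resulting competitor has strictly smaller action than any path in $\mathcal{P}(Qstart,Qend)$. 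Taking the infimum over the perturbed path space then gives $\tilde{\mathcal{A}}(\vec{a}_0+\varepsilon\vec{v})<\tilde{\mathcal{A}}(\vec{a}_0)$, contradicting minimality; the same argument with $T$ in place of $0$ rules out collisions in $Qend$. Once noncollision on the closed interval $[0,T]$ is in hand, the smoothness of $U$ on a neighborhood of the trajectory lets $\ddot q^*_i$ extend continuously to the endpoints, so Newton's equation \eqref{Newton} holds on all of $[0,T]$.

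The hard part will be the endpoint Marchal comparison. The classical Marchal--Chenciner blow-up argument exploits two-sided freedom at an interior collision; at an endpoint one has only one-sided freedom and must simultaneously track how the outer functional $\tilde{\mathcal{A}}$ depends on $\vec{a}$ through both the explicit boundary data and the implicit inner minimizer $q^*(t,\vec{a})$. The perturbation $\vec{v}$ has to be chosen compatibly with the algebraic structure of $Qstart$ in the SPBC so as to actually split the colliding cluster, and the local kinetic gain produced by this splitting over a small interval $[0,\delta]$ must be shown to dominate any potential loss on the complementary interval $[\delta,T]$. Apart from this delicate endpoint analysis, the remaining arguments are essentially standard consequences of Lemma \ref{MaC} and the smoothness of $U$ away from the collision set.
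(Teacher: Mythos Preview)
Your plan is correct and matches the paper's approach: Lemma \ref{MaC} handles the open interval, and for each endpoint one constructs a local deformation that separates the colliding cluster while keeping the new boundary inside the SPBC, producing a competitor with strictly smaller action and hence $\tilde{\mathcal{A}}(\vec{a}_0+\varepsilon\vec{v})<\tilde{\mathcal{A}}(\vec{a}_0)$. One clarification of your sketch: the action \emph{gain} from splitting comes from the potential term (removing the $1/|r|$ singularity near $t=0$), not from kinetic energy, and the competing losses --- the extra kinetic energy of the deformation and the perturbed interaction with the non-colliding bodies --- all live on the same short interval $[0,\epsilon]$, since the deformed path is taken to agree with $q^*$ on $[\epsilon,T]$; the paper carries this out explicitly via Sundman--Sperling asymptotics and a concrete cutoff $\phi$.
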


Let $\mathbf{A}$ and $\mathbf{B}$ be two proper linear subspaces of $(\mathbf{R}^2)^4$ which are given as $$\mathbf{A}=\left\{ \left. \left( \begin{array}{cc} a_1 & a_2\\
0 & -a_3\\ -a_1 & a_2\\
0 & -2a_2+a_3 \end{array} \right)\in (\mathbf{R}^2)^4\right| (a_1,a_2,a_3)\in \mathbf{R}^3 \right\}$$ and $$\mathbf{B}=\left\{ \left.  \left( \begin{array}{rr} -a_5 & a_4\\
a_5 & a_4\\ -a_6 & -a_4\\
a_6 & -a_4 \end{array} \right) R(\theta) \in (\mathbf{R}^2)^4\right| (a_4,a_5,a_6)\in \mathbf{R}^3 \right\}.$$
Let us consider the action functional $\mathcal{A}$ defined in \eqref{Min} over the function space $$\mathcal{P}(\mathbf{A},\mathbf{B}):=\{q\in H^1([0,T],(\mathbf{R}^2)^4) | q(0)\in \mathbf{A}, q(T)\in \mathbf{B}\}.$$
It is easy to prove the theorem of equivalence below.
\begin{theorem}[Equivalence]\label{Thm:EQ}
$\vec{a}_0\in \Gamma$ with correspoinding path $q^*\in S(\vec{a}_0)$ satisfying $q^*(0)=Qstart$ and $q^*(T)=Qend$ is a minimizer of $\tilde{\mathcal{A}}(\vec{a})$ over the space $\Gamma$, if and only if, $q^*$ is a minimizer of $\mathcal{A}$ over the function space $\mathcal{P}(\mathbf{A},\mathbf{B})$ with $q^*(0)=Qstart\in \mathbf{A}$ and $q^*(T)=Qend \in \mathbf{B}$.
\end{theorem}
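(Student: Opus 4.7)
The plan is to show that Theorem \ref{Thm:EQ} is essentially a reformulation of definitions: the two-step minimization that defines $\min_{\vec{a}\in\Gamma}\tilde{\mathcal{A}}(\vec{a})$ coincides with the single-step minimization of $\mathcal{A}$ over the larger path space $\mathcal{P}(\mathbf{A},\mathbf{B})$. The key observation is that the prescribed-boundary subspaces $\mathbf{A}$ and $\mathbf{B}$ are precisely the images of the linear parametrizations used in the SPBC, so sweeping $\vec{a}$ over $\Gamma$ is the same as sweeping $(q(0), q(T))$ over $\mathbf{A}\times\mathbf{B}$.

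First I would verify that the affine maps $(a_1,a_2,a_3)\mapsto Qstart$ and $(a_4,a_5,a_6)\mapsto Qend$ are linear bijections onto $\mathbf{A}$ and $\mathbf{B}$ respectively. This is immediate from the matrix forms: row one of $Qstart$ recovers $(a_1,a_2)$, row two recovers $a_3$, and analogously $(a_4,a_5,a_6)$ can be read off $Qend\, R(\theta)^{-1}$ after the rotation is undone. Consequently $\vec{a}\mapsto (Qstart(\vec{a}), Qend(\vec{a}))$ is a bijection $\Gamma\to \mathbf{A}\times\mathbf{B}$, and the larger path space decomposes as
$$\mathcal{P}(\mathbf{A},\mathbf{B})=\bigcup_{\vec{a}\in\Gamma}\mathcal{P}(Qstart(\vec{a}), Qend(\vec{a})),$$
with the pieces pairwise disjoint and exhausting the union because every $q\in\mathcal{P}(\mathbf{A},\mathbf{B})$ has endpoints that come from a unique $\vec{a}$.

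Next, by the tower-of-infimums principle applied to this decomposition,
$$\inf_{q\in\mathcal{P}(\mathbf{A},\mathbf{B})}\mathcal{A}(q)=\inf_{\vec{a}\in\Gamma}\inf_{q\in\mathcal{P}(Qstart(\vec{a}), Qend(\vec{a}))}\mathcal{A}(q)=\inf_{\vec{a}\in\Gamma}\tilde{\mathcal{A}}(\vec{a}),$$
and the minimizer characterization follows directly. For the forward direction, if $\vec{a}_0$ minimizes $\tilde{\mathcal{A}}$ and $q^*\in S(\vec{a}_0)$, then for any competitor $q\in\mathcal{P}(\mathbf{A},\mathbf{B})$ the corresponding parameter $\vec{a}$ satisfies $\mathcal{A}(q)\geq \tilde{\mathcal{A}}(\vec{a})\geq \tilde{\mathcal{A}}(\vec{a}_0)=\mathcal{A}(q^*)$. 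For the converse, if $q^*$ minimizes $\mathcal{A}$ over $\mathcal{P}(\mathbf{A},\mathbf{B})$ with $q^*(0)=Qstart(\vec{a}_0)$ and $q^*(T)=Qend(\vec{a}_0)$, then the inclusion $\mathcal{P}(Qstart(\vec{a}_0), Qend(\vec{a}_0))\subset \mathcal{P}(\mathbf{A},\mathbf{B})$ automatically places $q^*$ in $S(\vec{a}_0)$, and testing against an arbitrary $\vec{a}\in\Gamma$ together with any $q\in\mathcal{P}(Qstart(\vec{a}), Qend(\vec{a}))$ yields $\tilde{\mathcal{A}}(\vec{a}_0)=\mathcal{A}(q^*)\leq \mathcal{A}(q)$, whence $\tilde{\mathcal{A}}(\vec{a}_0)\leq \tilde{\mathcal{A}}(\vec{a})$.

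I expect no substantial obstacle here: the result is a pure unpacking of definitions, and the genuine analytic content — non-collision along minimizers (Lemma \ref{MaC}) and coercivity in $\vec{a}$ (Proposition \ref{PROP:1}) — was already established and is not needed for this equivalence. The only care required is the injectivity check on the two parametrizations, so that distinct $\vec{a}\in\Gamma$ produce distinct boundary configurations and the two formulations sweep out exactly the same family of boundary value problems.
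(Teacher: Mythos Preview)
Your proposal is correct and is precisely the natural unpacking of definitions; the paper itself does not give a proof, stating only ``It is easy to prove the theorem of equivalence below,'' and your argument supplies exactly the details one would expect. The bijection check and the tower-of-infimums decomposition are the right ingredients, and nothing further is needed.
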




\begin{theorem}[Extension]\label{Thm:ET} 
For any local minimizer $\vec{a}_0\in \Gamma$ of $\tilde{\mathcal{A}}(\vec{a})$ over the space $\Gamma$,  its corresponding path $q^*(t)=(q_1^*(t), q_2^*(t), q_3^*(t), q_4^*(t))$  on $[0, T]$ can be extended to a periodic solution of the Newton's equation \eqref{Newton} by the reflection $B=\left( \begin{array}{ll} -1 & 0\\
0 & 1\end{array} \right)$, a permutation $\sigma$ and the rotation $R(\theta)$ as follows: $q(t)=q^*(t)$ on $[0, T]$,
 $$q(t)=((q^*_2(2T-t),q^*_1(2T-t),q^*_4(2T-t),q^*_3(2T-t))B)R(2\theta) \quad \hbox{ on } \quad (T, 2T],$$
and
 \begin{equation}\label{qet}
 q(t)=\sigma^{k}(q(t-2kT))R(2k\theta) \hbox{ for } t\in (2kT,(2k+2)T]
  \hbox{ and } k\in \mathbf{Z}^+,
 \end{equation}
where $\sigma=[2,3,4,1]$ is a permutation such that  $\sigma(q(t-2T))=( q_2(t-2T), q_3(t-2T), q_4(t-2T), q_1(t-2T)).$ There exists a local minimizer $\vec{a}_0$ such that its corresponding path produces a star pentagon choreographic solution (see Figure \ref{fig1}) of the Newton's equation \eqref{Newton} with  minimum period $40T$.
\end{theorem}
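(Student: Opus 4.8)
\textbf{Proof proposal for Theorem \ref{Thm:ET} (Extension).}

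The plan is to verify that the formula \eqref{qet} really does glue the minimizing piece $q^*$ on $[0,T]$ into a globally $C^2$ solution of \eqref{Newton}, and then to identify the period and the choreographic structure. The key observation is that the SPBC were engineered so that the two boundary configurations $Qstart\in\mathbf A$ and $Qend\in\mathbf B$ are each invariant (as \emph{sets} of unordered labelled points, up to the relevant permutation) under a specific isometry of the plane: $Qstart$ is symmetric under the reflection $B$ about the $y$-axis together with the transposition $(1\,3)$ fixing $2$ and $4$, while $Qend$ is symmetric under $B$ composed with the transpositions $(1\,2)$ and $(3\,4)$. First I would make this precise: I would check by direct matrix computation that the map $t\mapsto (q^*_2,q^*_1,q^*_4,q^*_3)(2T-t)\,B\,R(2\theta)$ has, at $t=T$, the value $q^*(T)=Qend$ (using that $Qend$ times the appropriate permutation-and-reflection equals $Qend\,R(2\theta)^{-1}$ or similar), so the two branches agree at $t=T$; and at $t=2T$ the second branch takes the value $\sigma(Qstart)$-up-to-rotation, which matches the $k=1$ branch of \eqref{qet} at its left endpoint. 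This is the bookkeeping heart of the proof and I expect it to be the main obstacle — not because any single step is deep, but because one must be scrupulous that the reflection, the permutation $\sigma=[2,3,4,1]$, and the rotation $R(2\theta)$ compose consistently at every junction $t=kT$, and in particular that the boundary configurations' built-in symmetries are exactly the ones that make the endpoints match.

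Second, I would upgrade matching of positions to matching of velocities and hence to a genuine solution. Here the crucial input is Theorem \ref{Thm:NC}: $q^*$ is a classical (collision-free, $C^2$) solution of Newton's equations on the \emph{closed} interval $[0,T]$, so it extends a little beyond both endpoints as a solution. The reflection $B$, the permutation $\sigma$, and the rotation $R$ are all symmetries of the Newtonian potential $U$ and of the kinetic term (since masses are equal and $B$, $R(\theta)$ are orthogonal), so each reflected/permuted/rotated branch is again a solution of \eqref{Newton} on its interval. The standard argument then applies: at a junction point $t=kT$ the two adjacent branches are both solutions and agree in position; I must check they also agree in velocity. This follows because the minimality of $q^*$ over $\mathcal P(\mathbf A,\mathbf B)$ forces the natural (transversality) boundary conditions — the velocity $\dot q^*(0)$ is orthogonal to $\mathbf A$ and $\dot q^*(T)$ is orthogonal to $\mathbf B$ in the mass metric — and the reflection branch is precisely the one whose velocity at $t=T$ is the $B$-reflected, $\sigma$-permuted, $R(2\theta)$-rotated image of $\dot q^*(T)$; the transversality condition is exactly what makes this image equal to $\dot q^*(T)$ read from the other side. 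By uniqueness of solutions of the ODE with given position and velocity, the glued path is $C^2$ and solves \eqref{Newton} everywhere; iterating \eqref{qet} extends it to all $t\in\mathbf R^+$, and using time-reversal symmetry of Newton's equations (property 3 of the main theorem) to all $t\in\mathbf R$.

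Third, I would pin down the period and the choreography. Applying \eqref{qet} repeatedly, after $k$ double-steps one has picked up the permutation $\sigma^k$ and the rotation $R(2k\theta)$. With $\theta=\frac{2\pi}{5}$ we have $R(2\theta)$ of order $5$, so $R(2k\theta)=\mathrm{Id}$ exactly when $5\mid k$; and $\sigma$ has order $4$, so $\sigma^k=\mathrm{id}$ exactly when $4\mid k$. The configuration (as a labelled path) returns to its start when both hold, i.e.\ when $20\mid k$, giving $q(t+40T)=q(t)$; one must also check $40T$ is \emph{minimal}, which follows because no proper divisor $k<20$ kills both $\sigma^k$ and $R(2k\theta)$ simultaneously, and because the orbit is non-circular (a genuine isosceles-triangle-to-trapezoid motion, not a relative equilibrium) so no smaller hidden symmetry can shorten the period. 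Finally, tracking one body through the twenty double-steps shows $q_2(t)=q_1(t+10T)$, $q_3(t)=q_1(t+20T)$, $q_4(t)=q_1(t+30T)$: indeed after ten double-steps the accumulated permutation is $\sigma^{10}$, which (reading how \eqref{qet} relabels at the half-period $t=10T$, together with the $t\mapsto 2T-t$ reflection in the first extension) sends the role of body $1$ to that of body $2$, etc., so all four bodies traverse the same closed curve — the star pentagon. The existence of a local minimizer $\vec a_0$ whose piece is the correct ``isosceles triangle to trapezoid'' arc (rather than some degenerate alternative) is guaranteed by Proposition \ref{PROP:1} (coercivity of $\tilde{\mathcal A}$ gives a global minimizer in $\Gamma$) together with the geometric description of $S(\vec a)$ built into the SPBC, so that the assembled curve is the star pentagon of Figure \ref{fig1}.
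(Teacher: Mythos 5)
Your first two stages follow essentially the paper's own route: position matching at the junctions comes from the structure of the SPBC, and velocity matching comes from the vanishing of the boundary terms in the first variation of $\mathcal{A}$ over $\mathcal{P}(\mathbf{A},\mathbf{B})$ (the natural/transversality conditions $\dot q^*(0)\perp\mathbf{A}$, $\dot q^*(T)\perp\mathbf{B}$), combined with the invariance of Newton's equations under $B$, $\sigma$ and $R(\theta)$ and uniqueness for the ODE. One caveat: the transversality conditions alone give only three scalar relations at each endpoint, while the matching conditions \eqref{TT}--\eqref{T0} are more numerous; the paper closes this count by also invoking conservation of linear momentum, $\sum_i\dot q_{i1}=\sum_i\dot q_{i2}=0$, at both endpoints, and without that extra input your sentence ``the transversality condition is exactly what makes this image equal to $\dot q^*(T)$'' does not yet follow. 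Also, for the choreography you should use five double-steps for the shift by $10T$ (accumulated permutation $\sigma^5=\sigma$, rotation $R(10\theta)=\mathrm{Id}$), not ten double-steps and $\sigma^{10}$; the conclusion $q_2(t)=q_1(t+10T)$ is right but the bookkeeping as written is off.

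The genuine gap is in the last claim, the existence of a local minimizer whose extension is the star pentagon. Coercivity (Proposition \ref{PROP:1}) only gives a minimizer of $\tilde{\mathcal{A}}$ on $\Gamma$; it does not exclude that this minimizer is the square configuration $\vec a^\circ=[a_1,0,-a_1,\tfrac{\sqrt2 a_1}{2},-\tfrac{\sqrt2 a_1}{2},\tfrac{\sqrt2 a_1}{2}]$ whose minimizing path extends to the trivial circular (relative equilibrium) choreography, which is perfectly admissible under the SPBC and whose orbit is a circle, not a star pentagon (and whose minimum period is not $40T$). The paper rules this out by an explicit action comparison: it computes the action of the path generating the circular motion, $\mathcal{A}(q^\circ)=3(2)^{-1/3}U_0^{2/3}T^{1/3}(\theta-\tfrac{\pi}{4})^{2/3}\approx 3.5287$ for $\theta=\tfrac{2\pi}{5}$, $T=1$, and exhibits a constant-velocity test path $\bar q$ satisfying the SPBC with $\mathcal{A}(\bar q)\approx 3.2484<\mathcal{A}(q^\circ)$, so the minimizer has strictly smaller action than any circular-generating path and is therefore non-circular (Appendices A and B of the paper). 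Your appeal to ``the geometric description of $S(\vec a)$ built into the SPBC'' assumes precisely what must be proved, and your period-minimality argument also presupposes this non-circularity; without the comparison step the final statement of the theorem is not established.
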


\begin{theorem} \label{Thm:LS}
Star pentagon choreographic solution is linearly stable.
\end{theorem}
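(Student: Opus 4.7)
The plan is to establish linear stability by analyzing the monodromy matrix of the variational equation along the star pentagon orbit $q(t)$ of period $\mathcal{T}=40T$. First I would linearize Newton's equations \eqref{Newton} along $q(t)$ to obtain a linear Hamiltonian system $\dot{\xi}=J H''(q(t))\,\xi$ with $\mathcal{T}$-periodic coefficients on the $16$-dimensional phase space $(\mathbf{R}^{2})^{4}\times(\mathbf{R}^{2})^{4}$, where $J$ is the standard symplectic form. Linear stability in the classical sense then reduces to showing that the monodromy matrix $M=\Phi(\mathcal{T})$, where $\Phi$ is the fundamental solution with $\Phi(0)=I$, is symplectic with all eigenvalues on the unit circle and semisimple (so that every Jordan block for a multiplier on $S^{1}$ is trivial).

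The next step is to exploit the rich symmetry group generated by the reflection $B$, the rotation $R(2\theta)$, the cyclic permutation $\sigma=[2,3,4,1]$, and the choreography $q_{2}(t)=q_{1}(t+10T)$, as formalized in Theorem \ref{Thm:ET} via equation \eqref{qet}. Each of these operations is realized as a constant symplectic matrix $S_{k}$ acting on phase space, and the extension formula gives a factorization $\Phi(\mathcal{T})=\prod_{k} S_{k}\Phi(T)S_{k}^{-1}$-type product that reduces the computation of $M$ to the single fundamental matrix $\Phi(T)$ of the $[0,T]$ piece together with finitely many algebraically given conjugators. This is the decisive computational simplification: one only needs to integrate the variational equation along the minimizing arc $q^{*}(t)$ obtained in Theorem \ref{Thm:ET}, and then reassemble $M$ symbolically.

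Because the Newtonian system has the first integrals of energy, linear momentum, angular momentum, and center of mass, the monodromy $M$ automatically has $+1$ as an eigenvalue with algebraic multiplicity at least $8$, coming in nontrivial Jordan blocks that would obscure the remaining spectrum. As indicated in Remark \ref{rem2}, I would perform a canonical (symplectic) transformation that (i) fixes the center of mass and total linear momentum, passing to the reduced $12$-dimensional phase space, and (ii) uses the angular momentum map and its conjugate angle to further reduce by the $SO(2)$ action, leaving an $8$-dimensional symplectic reduced monodromy $\widetilde{M}$. The linear stability of $q(t)$ in the reduced sense then amounts to $\widetilde{M}$ having its $8$ eigenvalues on the unit circle and being semisimple.

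Finally, the eigenvalues of $\widetilde{M}$ are computed explicitly using the initial data listed in the Remark after Theorem \ref{main}. I would verify that the characteristic polynomial of $\widetilde{M}$ (which is reciprocal by symplecticity) factors into quadratics $\lambda^{2}-2\cos(\beta_{j})\lambda+1$ with real $\beta_{j}$, and that the corresponding eigenvectors span $\mathbf{C}^{8}$. The main obstacle, as usual for stability results on nontrivial choreographies, is that these eigenvalue computations are inherently numerical: making the argument fully rigorous requires either high-precision interval arithmetic on the integration of the variational equation from $0$ to $T$, or an appeal to an index-theoretic framework as suggested in Remark \ref{rem2} (cf.\ \cite{HLS,YL,DO}). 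The symmetry reduction described above is what makes such a rigorous numerical verification feasible in practice by shrinking the time interval from $40T$ to $T$ and the phase-space dimension from $16$ to $8$.
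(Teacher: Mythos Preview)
Your overall strategy---reduce by the first integrals, compute the monodromy of the linearized flow numerically, and verify that the nontrivial multipliers lie on the unit circle and are simple---is exactly what the paper does in Section~\ref{sec4}. Two points of comparison are worth noting.

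First, the paper does \emph{not} use the symmetry factorization you propose. It performs the Jacobi-coordinate reduction (removing center of mass and total momentum, $16\to 12$), then passes to symplectic polar coordinates and quotients by the $SO(2)$ action via a generating function (removing angular momentum and its conjugate angle, $12\to 10$). It then integrates the variational equation over the \emph{entire} period $\mathcal{T}=40T$ with a Runge--Kutta--Fehlberg scheme, forms $W=\tfrac12(X(\mathcal{T})+X(\mathcal{T})^{-1})$, and checks that the four nontrivial eigenvalue pairs of $W$ are real, distinct, and in $(-1,1)$. Your idea of exploiting~\eqref{qet} to express $X(\mathcal{T})$ as a product of conjugates of $\Phi(T)$ is the Roberts-style refinement used for the figure eight in~\cite{RG}; it is legitimate and would shorten the numerical integration, but the paper does not carry it out.

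Second, your dimension count is off. Symplectic reduction by translations and by $SO(2)$ brings you to a $10$-dimensional system, not $8$: the energy integral and the time-translation direction along the orbit still contribute a residual pair of $+1$ multipliers in the reduced monodromy (this is exactly the pair the paper identifies via $\dot\gamma(0)$ and $J\dot\gamma(0)$). One does not obtain an $8$-dimensional \emph{symplectic} reduced monodromy by a further canonical transformation; to get down to $8$ one would need an isoenergetic Poincar\'e section. So your $\widetilde{M}$ should be $10\times10$ with an expected $+1$ pair, and the stability criterion applies to the remaining eight eigenvalues---precisely as the paper states.
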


\begin{remark}\label{remark2}
(1) These theorems assert  that the initial pieces of orbits are extended to a periodic solution by assembling the pieces themselves. From the extension equation \eqref{qet}, it is easy to prove the properties in theorem \ref{main}.

(2) This idea had been reported by one of the authors, Tiancheng Ouyang \cite{CA2} in 2001. Some preliminary results were included in the paper \cite{Ouyang1} which was submitted in 2003 and was finally published in 2012.
This method can also be used to prove the existence of figure eight, spatial isosceles three-body problem, Hip-Hop and the other surprising motions that have been recently discovered. For example, the SPBC for figure eight can be chosen as $Qstart= \left( \begin{array}{cc} a_1 & -a_2\\
-a_1& a_2\\ 0 & 0 \end{array} \right)$ and $Qend= \left( \begin{array}{rr} a_3 & 0\\
-\frac{a_3}{2} & -a_4\\ -\frac{a_3}{2} & a_4\end{array} \right).$ 

(3) Proposition  \ref{PROP:1}  is generally true for $\theta\in (0, \pi)\backslash\left\{\frac{\pi}{4},\frac{\pi}{2},\frac{3\pi}{4}\right\}$. It can be proved by simply noticing that $\mathbf{A}\bigcap \mathbf{B}$ is  empty for $\theta\in (0, \pi)\backslash\left\{\frac{\pi}{4},\frac{\pi}{2},\frac{3\pi}{4}\right\}$.  When  $\theta\in \left\{\frac{\pi}{4},\frac{\pi}{2},\frac{3\pi}{4}\right\}$, there exists a sequence $\vec{a}_n$ such that $\tilde{\mathcal{A}}(\vec{a_n})$ remains finite while $\|\vec{a}_n\|\rightarrow \infty$ when $n\rightarrow \infty$.
\end{remark}

\section{Proof of Theorem \ref{Thm:NC} to \ref{Thm:ET}}\label{sec3}

\begin{proof}[The proof of theorem \ref{Thm:NC}]
If $\vec{a}_0$ is a minimizer of $\tilde{\mathcal{A}}(\vec{a})$ over the space $\Gamma$, we only need to prove that $Qstart(a_{10},a_{20},a_{30})$ and $Qend(a_{40},a_{50},a_{60})$ have no collision. In fact, there are six cases corresponding to initial collision boundary. (1) $a_{10}\not=0$ and $a_{20}=a_{30}$ binary collision ($m_2$ and $m_4$ collide). (2) $a_{10}=0$, $a_{20}\not=-a_{30}$,and $a_{20}\not=\frac{1}{3}a_{30}$, binary collision ($m_1$ and $m_3$ collide). (3) $a_{10}=0,$ and $a_{20}=a_{30}\not=0$ simultaneous binary collision. (4) $a_{10}=a_{20}=a_{30}=0$ total collision. (5) $a_{10}=0,a_{20}=-a_{30}\not=0$ triple collision ($m_1$, $m_2$, and $m_3$ collide). (6) $a_{10}=0,a_{20}=\frac{1}{3}a_{30}\not=0$ triple collision ($m_1$, $m_3$, and $m_4$ collide).  There are five cases corresponding to ending collision boundary. (7) $a_{40}\not=0$, $a_{50}=0$ and $a_{60}\not=0$ binary collision ($m_1$ and $m_2$ collide). (8) $a_{40}\not=0$, $a_{50}\not=0$ and $a_{60}=0$ binary collision ($m_3$ and $m_4$ collide). (9) $a_{40}\not=0$, $a_{50}=a_{60}=0$ simultaneous binary collision. (10) $a_{40}=a_{50}=a_{60}=0$  total collision. (11) $a_{40}=0,$ $a_{50}=a_{60}\not=0$ simultaneous binary collision.

We only prove the case (1) $a_{10}\not=0$, $a_{20}=a_{30}$ binary collision ($m_2$ and $m_4$ collide) and other cases can be proved by similar arguments. Suppose that $q$ is a local minimizer of $\mathcal{A}$ satisfying the SPBC for $\vec{a}_0$. At time $t=0$ the bodies $m_2$ and $m_4$ start at the collision point $(0,-a_{30})$ while the other bodies are away. Since $q$ has no collision in the open interval $(0,T)$, we will then analyze the motion during the closed time interval $[0,\epsilon]$ and prove the existence of sufficiently small values of $\epsilon$ such that a local deformation has lower action and satisfy the SPBC. The contradiction proves that $q$ can not have this binary collision.  Our proof follows the papers of Marchal \cite{Ma1} and Chenciner \cite{CA2},  but a few technique arguments are proved differently (especially the construction of deformation with SPBC).

We will build the two following solutions $S_2$ (Kepler ejection orbits at the starting point) and $S_3$ (no collision) with: (A) Exactly the same motion of all bodies in the interval $[\epsilon, T)$. (B) At the time interval $[0,\epsilon]$, the ejection orbits are replaced by a collision free orbits with boundary conditions satisfying SPBC. The corresponding actions will be $A_1=\mathcal{A}(q)$, $A_2=\mathcal{A}(S_2)$, $A_3=\mathcal{A}(S_3)$. We want to prove that $A_1>A_3$ for sufficiently small time $\epsilon$. Since (A), the actions are different only in the time interval $[0,\epsilon]$.\\
First, consider the ejection orbits in the starting time interval $[0, \epsilon]$ in $S_2$. Let $r$ be the simple radial two-body motion leading from $0$ to $r_\epsilon$ in the time interval $[0,\epsilon]$. By Sundman and Sperling's estimates near collisions \cite{SH,SK}, there exists a positive constant $\gamma$ such that  $r(t)=(\gamma t^{\frac{2}{3}})\vec{\alpha}$ where $\vec{\alpha}$ is a unit vector.  Let $\xi(t)=\frac{m_2q_2(t)+m_4q_4(t)}{m_2+m_4}$ be the center of mass of the second and forth body.
$$ q_{1S_2}(t)=q_1(t), q_{2S_2}(t)=\xi(t)+\frac{m_4}{m_2+m_4}r(t),$$
$$ q_{3S_2}(t)=q_3(t), q_{4S_2}(t)=\xi(t)-\frac{m_2}{m_2+m_4}r(t).$$
We consider the deformation of $r(t)$ as \begin{equation}
r_{\delta}(t)=r(t)+\delta \phi(t)\vec{s},
\end{equation}
where $\vec{s}$ is a unit vector of $(0,1)$ or $(0,-1)$,  $\delta =\frac{\epsilon}{N}$ with $N\geq 2\max\{ K_{in}/U_{in}, 4\}$, and
$$\phi(t)=\left\{ \begin{array}{ll} 1, & 0\leq t\leq \delta\\
\frac{\delta+\tilde{N}\delta-t}{\tilde{N}\delta}, & \delta<t\leq \delta+\tilde{N}\delta\\
0, & \delta+\tilde{N}\delta<t\leq \epsilon.\end{array}\right.$$
where $K_{in}/U_{in}<\tilde{N}<N-1$. The positive $K_{in}$ and $U_{in}$ are given in the equations \eqref{Kin} and \eqref{Uin} respectively, which are independent of $\epsilon$.

The collision-free motion $S_3$ is denoted by
$$ q_{1S_3}(t)=q_1(t), q_{2S_3}(t)=\xi(t)+\frac{m_4}{m_2+m_4}r_\delta(t),$$
$$ q_{3S_3}(t)=q_3(t), q_{4S_3}(t)=\xi(t)-\frac{m_2}{m_2+m_4}r_\delta(t).$$
At $t=0$, $q_{1S_3}(0)=q_1(0), q_{2S_3}(0)=(0,-a_{30}\pm \delta/2),q_{3S_3}(0)=q_3(0),q_{4S_3}(0)=(0,-a_{30}\mp \delta/2)$. So the initial condition of $S_3$ satisfies the SPBC. \\
Now consider the expression of the actions for each path in the time interval $[0,\epsilon]$. They will be decomposed into two parts: the first part $A_{in}$ is to compute the action of the relative motion of the colliding bodies  $m_2$ and $m_4$; the second part $A_{out}$ is to compute the action of the remainder.

It is easy to know that $A_{1in}\geq A_{2in}$   since the homothetic collision-ejection orbit is a minimizer. We only need to prove $A_{2in}-A_{3in}>A_{3out}-A_{1out}$   in order to prove $A_{1}>A_{3}$ in $[0, \epsilon]$.
 We first note that
$$m_2|\dot{q}_{2S_2}|^2+m_4|\dot{q}_{4S_2}|^2=m_2\langle \dot{\xi}+\frac{m_4}{m_2+m_4}\dot{r},\dot{\xi}+\frac{m_4}{m_2+m_4}\dot{r} \rangle +m_4\langle \dot{\xi}-\frac{m_2}{m_2+m_4}\dot{r},\dot{\xi}-\frac{m_2}{m_2+m_4}\dot{r} \rangle
$$
$$=(m_2+m_4)|\dot{\xi}|^2+\frac{m_2m_4}{m_2+m_4}|\dot{r}|^2.$$
Then
$$A_{2in}-A_{3in}=\int_{0}^{\epsilon}\frac{m_2m_4}{2(m_2+m_4)}(|\dot{r}|^2-|\dot{r}_\delta|^2 ) +m_2m_4\left(\frac{1}{|r|}-\frac{1}{|r_\delta|}\right)dt,$$
$$A_{3out}-A_{1out}=\int_{0}^{\epsilon} \sum_{i=1,3; j=2,4}\left(\frac{m_im_j}{|q_i-q_{j}|}-\frac{m_im_j}{|q_i-q_{jS_3}|}\right)dt.
$$
Now we estimate the bounds for $A_{out}$. Consider the motion of the mass $m_j$ between the arbitrary successive instants $t_1$ and $t_2$. Because the minimum of the integral $\int_{t_1}^{t_2} \frac{m_j |\dot{q}_j|^2}{2}dt$ between given positions $q_j(t_1)$ and $q_j(t_2)$ is obtained for a constant velocity vector, we can always write $\frac{m_j|q_j(t_2)-q_j(t_1)|^2}{2(t_2-t_1)}\leq \int_{t_1}^{t_2} \frac{m_j |\dot{q}_j|^2}{2}dt \leq \mathcal{A}(q)\leq K< \infty$. So if $0\leq t_1\leq t_2\leq T$, $|q_j(t_2)-q_j(t_1)|\leq \left(\frac{2K(t_2-t_1)}{m_j}\right)^{1/2}$. Pick up $\epsilon >0$ small such that the two bodies $m_2$ and $m_4$ will remain at less than twice that distance from the collision point $(0,-a_{30})$ all along the time interval $[0, \epsilon]$, i.e. $|q_2-q_4|\leq J\sqrt{\epsilon}$, where $J=2(2K)^{1/2}$. $m_1$ and $m_3$ will remain outside of the circle centered at the collision point with radius $D$ and $J\sqrt{\epsilon}\leq J\sqrt{\epsilon_0}\ll D$ for a fixed $\epsilon_0$. So during the time interval $[0, \epsilon]$, the two bodies $m_1$ and $m_3$ are outside of the circle of radius $D$ and center $(0,-a_{30})$, while the bodies $m_2$ and $m_4$ are inside the much smaller circle of the same center and radius $J\sqrt{\epsilon}$.
$$|A_{3out}-A_{1out}|\leq \int_{0}^{\epsilon} \sum_{i=1,3; j=2,4}m_im_j\left|\left(\frac{|q_i-q_{jS_3}|-|q_i-q_{j }| }{|q_i-q_{j }| |q_i-q_{jS_3}|} \right)\right|dt.
$$
$$  \leq \int_{0}^{\epsilon} \sum_{i=1,3; j=2,4}m_im_j\left(\frac{|q_{jS_3}-q_{j }| }{|q_i-q_{j }| |q_i-q_{jS_3}|} \right)dt.
$$
\begin{equation}\label{Uout} \leq \int_{0}^{\epsilon} \sum_{i=1,3; j=2,4}m_im_j\left(\frac{J\sqrt{\epsilon} }{(D-J\sqrt{\epsilon_0})^2} \right)dt= \frac{4J}{(D-J\sqrt{\epsilon_0})^2}\epsilon^{\frac{3}{2}}=U_{out}\epsilon^{\frac{3}{2}}.
\end{equation}

Let us compute $A_{2in}-A_{3in}$. By choosing appropriate $\vec{s}$
 such that  $\langle r,\vec{s}\rangle\geq 0$,
$$\int_{0}^{\epsilon}\frac{m_2m_4}{2(m_2+m_4)}(|\dot{r}|^2-|\dot{r}_\delta|^2 )dt=-\int_{0}^{\epsilon}\frac{m_2m_4}{2(m_2+m_4)}(2\delta\dot{\phi}\langle r,\vec{s}\rangle +(\delta\dot{\phi})^2)dt$$
$$\geq -\int_{\delta}^{\delta+\tilde{N}\delta}\frac{m_2m_4}{2(m_2+m_4)} (\delta\dot{\phi})^2dt =-\int_{\delta}^{\delta+\tilde{N}\delta}\frac{m_2m_4}{2(m_2+m_4)}( -\frac{1}{\tilde{N}})^2dt$$
\begin{equation}\label{Kin}
\geq-\frac{m_2m_4}{2(m_2+m_4)}\frac{\delta}{\tilde{N}}=-K_{in}\frac{\delta}{\tilde{N}} .
\end{equation}

$$\int_{0}^{\epsilon}\left(\frac{1}{|r|}-\frac{1}{|r_\delta|}\right)dt= \int_{0}^{\epsilon}\left(\frac{1}{|r|}-\frac{1}{(|r|^2+2\delta\phi\langle r,\vec{s}\rangle+(\delta \phi)^2)^{1/2}}\right)dt
$$
$$=\int_{0}^{\epsilon}\left(\frac{2\delta\phi\langle r,\vec{s}\rangle+(\delta \phi)^2}{|r|(|r|^2+2\delta\phi\langle r,\vec{s}\rangle+(\delta \phi)^2)^{1/2} (|r|+ (|r|^2+2\delta\phi\langle r,\vec{s}\rangle+(\delta \phi)^2)^{1/2})}\right)dt
$$
$$\geq  \int_{0}^{\delta}\left(\frac{(\delta )^2}{|r|(|r|+\delta) (2|r|+ \delta )}\right)dt \geq  \int_{0}^{\delta}\left(\frac{1}{\gamma(\gamma+\delta^{1/3}) (2\gamma+ \delta^{1/3} )}\right)dt
$$
\begin{equation}\label{Uin}
\geq \left(\frac{1}{\gamma(\gamma+1) (2\gamma+ 1)}\right)\delta=U_{in}\delta
 \end{equation}
So $A_{2in}-A_{3in}>\left(-K_{in}\frac{\delta}{\tilde{N}}+ U_{in}\delta\right)=\left(-\frac{K_{in}}{\tilde{N}}+U_{in}\right)\frac{\epsilon}{N}>U_{out}\epsilon^{\frac{3}{2}}\geq A_{3out}-A_{1out}$
for small $\epsilon$, which implies $A_1>A_3$. \\
The path $S_3$ starts at $q_1=(a_{10},a_{20}),$ $q_2=(0,-a_{20}\pm\delta/2)$, $q_3=(-a_{10},a_{20})$, $q_4=(0,-a_{20}\mp\delta/2)$ which satisfy the SPBC. The action of $S_3$ is smaller than the action of $S_1$ which contradicts the fact that $S_1$ is a minimizer. \\
The contradiction completes the proof that the vector $\vec{a}_0$ with collision $a_{20}=a_{30}$ is not a minimizer of $\mathcal{A}$ on $\Gamma$. \\
By the structure of the SPBC, we can prove that any corresponding path $q(t)$ of a minimizer of $\tilde{\mathcal{A}}$ on $\Gamma$ has no collision in $[0,T]$ by similar arguments for other cases. The feature of the SPBC that we apply to the arguments of  non-collision for other possible collision boundary cases is that SPBC has enough number of free variables. For example, for the case (10) $a_{40}=a_{50}=a_{60}=0$ total collision, the collision boundary can be deformed to a central configuration by perturbing the values of variables $a_{40},a_{50}, a_{60}$. The total collision solution can be replaced locally by a homographic solution generated by a central configuration.

\end{proof}

\begin{proof}[The proof of theorem \ref{Thm:ET}]
 By theorem \ref{Thm:NC}, any path $q^*(t)$ corresponding to a local minimizer $\vec{a}_0$ is a classic solution in the interval $[0, T]$. We prove this theorem by two steps. First, we prove that it can be extended to a periodic solutions. Second, we prove that there exists a local minimizer $\vec{a}_0$ which produce the star pentagon solution. \\
Because $q^*(t)$ is a classic solution of Newton's equation \eqref{Newton} on $(0,T)$, it is easy to check that $q(t)$ is a classical solution in each interval $((n-1)T, nT)$ for any given positive integer $n$. To prove $q(t)$ is a classical solution for all real $t$, we need prove that $q(t)$ is connected very well at $t=nT$ for any integer $n$, i.e. $\lim_{t\rightarrow (nT)^-} q(t)=\lim_{t\rightarrow (nT)^+} q(t)$ and $\lim_{t\rightarrow (nT)^-} \dot{q}(t)=\lim_{t\rightarrow (nT)^+} \dot{q}(t)$. By the structure of  the extension equation \eqref{qet}, we only need prove it for $n=1$ and $n=2$. By the SPBC, we have $\lim_{t\rightarrow (nT)^-} q(t)=\lim_{t\rightarrow (nT)^+} q(t)$ at $n=1$ and $n=2$. That $\lim_{t\rightarrow (nT)^-} \dot{q}(t)=\lim_{t\rightarrow (nT)^+} \dot{q}(t)$ at $n=1$ and $n=2$ is equivalent to the relations given by \eqref{TT} and \eqref{T0} below.
 \begin{equation}\label{TT}
 \begin{array}{ll}
\dot{q}_{1}(T)=(\dot{q}_{21}(T), -\dot{q}_{22}(T))R(2\theta), &
\dot{q}_{2}(T)=(\dot{q}_{11}(T), -\dot{q}_{12}(T))R(2\theta),\\
\dot{q}_{3}(T)=(\dot{q}_{41}(T), -\dot{q}_{42}(T))R(2\theta),&
\dot{q}_{4}(T)=(\dot{q}_{31}(T), -\dot{q}_{32}(T))R(2\theta),
\end{array}
\end{equation}
and at
$t=2T$, \begin{equation}\label{T0}
\dot{q}_{11}(0)=\dot{q}_{31}(0), \dot{q}_{12}(0)=-\dot{q}_{32}(0), \dot{q}_{22}(0)=\dot{q}_{42}(0)=0.
\end{equation}

 Since $\vec{a}_0\in \Gamma$ is a minimizer of $ \tilde{\mathcal{A}}(\vec{a})$ over $\Gamma$, $q^*$ is a minimizer of $\mathcal{A}$ over the function space $\mathcal{P}(\mathbf{A},\mathbf{B})$ by theorem \ref{Thm:EQ}. Here we use $q$ for $q^*$ by our extension formula \eqref{qet}. Consider an admissible variation  $\xi\in \mathcal{P}(\mathbf{A},\mathbf{B})$ with $\xi(0)\in \mathbf{A}$ and $\xi(T)\in \mathbf{B}$, then the first variation $\delta_\xi\mathcal{A}(q)$ is computed as:
  $$\delta_\xi\mathcal{A}(q)=\lim_{\delta\rightarrow 0}\frac{\mathcal{A}(q+\delta \xi)-\mathcal{A}(q)}{\delta}$$
    $$=\int_{0}^{T}\frac{1}{2}\sum_{i=1}^{4}\lim_{\delta\rightarrow 0} m_i \frac{\|\dot{q}_i +\delta \dot{\xi}_i\|^2-\|\dot{q}_i \|^2}{\delta}+ \lim_{\delta\rightarrow 0}\frac{U(q+\delta \xi)-U(q)}{\delta}dt$$
$$=\int_{0}^{T} \left (\sum_{i=1}^{4} m_i <\dot{q}_i, \dot{\xi}_i>+ \sum_{i=1}^4<\frac{\partial }{\partial q_i}(U(q(t))),\xi_i>\right)dt$$
$$=\sum_{i=1}^{4}\left.m_i <\dot{q}_i, \xi_i>\right|_{t=0}^{t=T} + \int_{0}^{T} <-m_i \ddot{q}_i+\frac{\partial }{\partial q_i}(U(q(t))),\xi_i>dt. $$
Because the first variation $\delta_\xi\mathcal{A}(q)$ valishes for any $\xi$, $q$ satisfies Newton's equation \eqref{Newton} and   $m_i=1, i=1,2,3,4$, we have
\begin{equation}\label{vaeq}
\delta_\xi\mathcal{A}(q)=\sum_{i=1}^{4}\left(<\dot{q}_i(T),\xi(T)> \right)-\sum_{i=1}^{4}\left(<\dot{q}_i(0),\xi(0)>\right)=0
\end{equation}
For $ i=1,2,3$, let $\xi^{(i)}(t)\in \mathcal{P}(\mathbf{A},\mathbf{B})$ satisfy $\xi^{(i)}(T)=0$ and
$\xi^{(i)}(0)=\left( \begin{array}{cc} a_1 & a_2\\
0 & -a_3\\ -a_1 & a_2\\
0 & -2a_2+a_3 \end{array} \right),$  where $a_i=1,$ $ a_j=0$  if $j\not=i.$ Then

   $$ \delta_{\xi^{(1)}}\mathcal{A}(q)=-(\dot{q}_{11}(0)-\dot{q}_{31}(0))=0,$$
$$ \delta_{\xi^{(2)}}\mathcal{A}(q)=-(\dot{q}_{12}(0)+\dot{q}_{32}(0)-2\dot{q}_{42}(0))=0,$$
$$ \delta_{\xi^{(3)}}\mathcal{A}(q)=-(-\dot{q}_{22}(0)+\dot{q}_{42}(0))=0.$$
By using $\sum_{i=1}^4 \dot{q}_{i1}(0)=0$ and $\sum_{i=1}^4 \dot{q}_{i2}(0)=0$, we can prove that relation \eqref{T0} holds after simple calculation.

For $i=4,5,6,$ let $\xi^{(i)}(t)\in \mathcal{P}(\mathbf{A},\mathbf{B})$ satisfy $\xi^{(i)}(0)=0$ and
$\xi^{(i)}(T)=\left( \begin{array}{cc} -a_5 & a_4\\
a_5 & a_4\\ -a_6 & -a_4\\
a_6 & -a_4 \end{array} \right)R(\theta), $  where $a_i=1,$ $ a_j=0$  if $j\not=i.$ Then
$$ \delta_{\xi^{(4)}}\mathcal{A}(q)=<\dot{q}_1(T)+\dot{q}_2(T),(0,1)R(\theta)>
+<\dot{q}_3(T)+\dot{q}_4(T),(0,-1)R(\theta)>$$
\begin{equation}\label{pa4}
=(\dot{q}_{11}+\dot{q}_{21})\sin(\theta)+(\dot{q}_{12}+\dot{q}_{22})\cos(\theta)
-(\dot{q}_{31}+\dot{q}_{41})\sin(\theta)-(\dot{q}_{32}+\dot{q}_{42})\cos(\theta)=0;
\end{equation}
$$ \delta_{\xi^{(5)}}\mathcal{A}(q)=<\dot{q}_{1}(T), (-1,0)R(\theta)>+<\dot{q}_2(T), (1,0)R(\theta)>$$
\begin{equation}\label{pa5}
=-\dot{q}_{11}\cos(\theta)+\dot{q}_{12}\sin{\theta}+\dot{q}_{21}\cos(\theta)-\dot{q}_{22}\sin(\theta)=0;
\end{equation}
$$\delta_{\xi^{(6)}}\mathcal{A}(q)=<\dot{q}_{3}(T), (-1,0)R(\theta)>+<\dot{q}_4(T), (1,0)R(\theta)>$$
\begin{equation}\label{pa6}
=-\dot{q}_{31}\cos(\theta)+\dot{q}_{32}\sin{\theta}+\dot{q}_{41}\cos(\theta)-\dot{q}_{42}\sin(\theta)=0,
\end{equation}
where the derivatives are taken at $t=T$. \\
Let $$A_{i1}=\dot{q}_{i1}-\dot{q}_{(i+1)1}\cos(2\theta)+\dot{q}_{(i+1)2}\sin(2\theta),$$
$$A_{i2}=\dot{q}_{i2}+\dot{q}_{(i+1)1}\sin(2\theta)+\dot{q}_{(i+1)2}\cos(2\theta),$$
$$A_{j1}=\dot{q}_{j1}-\dot{q}_{(j-1)1}\cos(2\theta)+\dot{q}_{(j-1)2}\sin(2\theta),$$
$$A_{j2}=\dot{q}_{j2}+\dot{q}_{(j-1)1}\sin(2\theta)+\dot{q}_{(j-1)2}\cos(2\theta),$$
for $i=1,3$ and $j=2, 4$. Because $\sum_{i=1}^4 \dot{q}_{ik}=0$ for $k=1,2$, we have
\begin{equation}\label{dqt}
A_{11}+A_{21}+A_{31}+A_{41}=0, \hspace{1cm} A_{12}+A_{22}+A_{32}+A_{42}=0.
\end{equation}
By using the trigonometric identities $\cos(\theta)=\cos(2\theta)\cos(\theta)+\sin(2\theta)\sin(\theta)$ and $\sin(\theta)=\sin(2\theta)\cos(\theta)-\cos(2\theta)\sin(\theta)$, from equation \eqref{pa4}, we have
$$ \dot{q}_{11}\sin(\theta)+\dot{q}_{21}(\sin(2\theta)\cos(\theta)-\cos(2\theta)\sin(\theta)) +\dot{q}_{12}\cos(\theta)+\dot{q}_{22}(\cos(2\theta)\cos(\theta)+\sin(2\theta)\sin(\theta)) -$$ $$\dot{q}_{31}\sin(\theta)-\dot{q}_{41}(\sin(2\theta)\cos(\theta)-\cos(2\theta)\sin(\theta)) -\dot{q}_{32}\cos(\theta)-\dot{q}_{42}(\cos(2\theta)\cos(\theta)+\sin(2\theta)\sin(\theta))=0,$$
which is
\begin{equation}\label{pa41}
A_{11}\sin(\theta)+A_{12}\cos{\theta}-A_{31}\sin(\theta)-A_{32}\cos(\theta)=0,
\end{equation}
Similarly from equation \eqref{pa4}, we also have
\begin{equation}\label{pa42}
A_{21}\sin(\theta)+A_{22}\cos{\theta}-A_{41}\sin(\theta)-A_{42}\cos(\theta)=0,
\end{equation}
\begin{equation}\label{pa44}
A_{21}\sin(\theta)+A_{22}\cos{\theta}-A_{31}\sin(\theta)-A_{32}\cos(\theta)=0.
\end{equation}
 From equation \eqref{pa5} and \eqref{pa6} we have
\begin{equation}\label{pa51}
A_{11}\cos(\theta)-A_{12}\sin{\theta}=0,
\end{equation}
\begin{equation}\label{pa52}
A_{21}\cos(\theta)-A_{22}\sin{\theta}=0,
\end{equation}
\begin{equation}\label{pa61}
A_{31}\cos(\theta)-A_{32}\sin{\theta}=0,
\end{equation}
\begin{equation}\label{pa62}
A_{41}\cos(\theta)-A_{42}\sin{\theta}=0.
\end{equation}
The equations \eqref{pa41}, \eqref{pa51}, \eqref{pa61} imply
\begin{equation}\label{a11}
A_{11}=A_{31}, A_{12}=A_{32}.
\end{equation}
The equations \eqref{pa42}, \eqref{pa52}, \eqref{pa62} imply
\begin{equation}
A_{21}=A_{41}, A_{22}=A_{42}.
\end{equation}
The equations \eqref{pa44}, \eqref{pa51}, \eqref{pa52} and \eqref{a11} imply
\begin{equation}
A_{11}=A_{21}, A_{12}=A_{22}.
\end{equation}
 Then the above three equations and equation \eqref{dqt} imply that $A_{kj}=0$ for $k=1,2,3,4$ and $j=1,2$. Because the relations \eqref{TT} is equivalent to $A_{kj}=0$, we complete the proof that $q(t)$ connects very well at $t=2T$. \\

Now we prove that there exists a minimizing path which is different from the circular motion. The circular motion can be obtained by extending the corresponding minimizing path of a particular local minimizer $\vec{a}^\circ$ in $\Gamma$. Let $q^{\circ}(t)$  be the corresponding path of $\vec{a}^{\circ}$ on $[0, T]$ which can be extended to a circular solution. It is not hard to get the exact formula for action $\mathcal{A}(q^{\circ}(t))$ (see equation \eqref{actCir} in Appendix A):
\begin{equation}
\mathcal{A}(q^{\circ}(t))=3(2)^{-\frac{1}{3}}U_0^{\frac{2}{3}}{T}^{\frac{1}{3}}\left( \theta-\frac{\pi}{4}  \right)^{\frac{2}{3}},\end{equation}
where $U_0=2\sqrt{2}+1$ is the constant value of the potential function for the four-body problem with equal masses 1 at the vertex of unit square. 
For $\theta=\frac{2\pi}{5}$ and $T=1$,  $$\mathcal{A}(q^{\circ}(t))\approx 3.528734094.$$
We assume that the test path $\bar{q}(t)$ is formed by  connecting the straight line with constant velocity from the starting configuration $Qstart$ to the ending configuration $Qend$ for the given $\vec{a}$. We now evaluate the action $\mathcal{A}(\bar{q}(t))$ of the test path $\bar{q}(t)$ for $T=1$ and $\vec{a} =[   1.0597,$ $    1.7696,$  $  0.8094,$ $  0.7536,$ $    1.1032,$ $    2.4398]$. It is not hard to compute the action over the time $[0,T]$. Although the corresponding action can be calculated by hand, it is computed by a Matlab program (see Appendix B).
 $$\mathcal{A}(\bar{q}(t))=3.2484<\mathcal{A}(q^{\circ}(t)).$$

So there exists a local minimizer $\vec{a}_0$ such that $\tilde{\mathcal{A}}(\vec{a}_0)<\mathcal{A}(\bar{q}(t))<\mathcal{A}(q^{\circ}(t))= \tilde{\mathcal{A}}(\vec{a}^\circ)$. Then the corresponding minimizing path $q^*$ of $\vec{a}_0$ produces the star pentagon solution. From the extension equation \eqref{qet}, it is easy to prove other properties of the main theorem \ref{main}. Because $\sigma=[2,3,4,1]$ is the permutation with $\sigma^4(q(t))=q(t)$ and $\theta=\frac{2\pi}{5}$ and the least common multiple of 4 and 5 is 20, the minimum period of the solution  is $\mathcal{T}=40T$ by extension equation \eqref{qet}. When $k=5$, $q(t)=\sigma^{5}(q(t-10T))R(10\theta)=\sigma(q(t-10T))$ for $t\in (10T,12T]$ which implies that the solution is choreographic.
\end{proof}

\section{Linearly Stability of Star Pentagon}\label{sec4}
Suppose that $\gamma(t)$ is a $\mathcal{T}$-periodic solution to the Hamiltonian system $\dot{\gamma}=J\nabla H(\gamma)$, where $J=\left[\begin{array}{ll} 0 & I\\ -I & 0 \end{array}\right]$ is the standard symplectic matrix and $I$ is the appropriately sized identity matrix.  Let $X(t)$ be the fundamental matrix solution to
$$\dot{\xi}=JD^2H(\gamma(t))\xi, \hspace{0.2cm} \xi(0)=I.$$
$X(t)$ is symplectic and satisfies $X(t+\mathcal{T})=X(t)X(\mathcal{T})$ for all $t$. The matrix $X(\mathcal{T})$ is called the monodromy matrix whose eigenvalues, the characteristic multipliers, determine the linear stability of the periodic solution. Since every integral in the $n$-body problem yields a multiplier of $+1$, there are eight $+1$ multipliers for a periodic orbit in the planar problem. It is natural to define the linear stability of a periodic solution by examining stabiltiy on the reduced quotient space.
\begin{definition}
A periodic solution of the planar $n$-body problem has eight trivial
characteristic multipliers of $+1$. The solution is spectrally stable if the remaining
multipliers lie on the unit circle and linearly stable if, in addition, the monodromy matrix
$X(T)$ restricted to the reduced space is diagonalizable.
\end{definition}

Here we apply standard symplectic transform to reduce Hamiltonian system to a 10 dimension Hamiltonian system. The monodromy matrix of the periodic solution $\gamma(t)$ in the reduced system has a pair of $+1$ eigenvalues and the remaining eight eigvalues must be on the unit circle if the solution is linearly stable.

To eliminate the trivial $+1$ multipliers of a periodic solution, we use Jacobi coordinates and symplectic polar coordinates (see chapter 7 in \cite{MHO2}). Denote $p_i=m_i\dot{q}_i$ as the momentum coordinates and let $\mu_i=\sum_{j=1}^i m_j$ and $M_i=\frac{m_i\mu_{i-1}}{\mu_i}$. Then let
$$\begin{array}{ll}
g_4={\frac {{ m_4}\,{ q_4}+{ m_3}\,{ q_3}+{ m_2}\,{ q_2}+{
m_1}\,{ q_1}}{{ m_1}+{ m_2}+{ m_3}+{ m_4}}}, & G_4={ p_4}+{ p_3}+{ p_2}+{ p_1};\\
u_2=q_2 - q_1, & v_2={\frac {{ \mu_1}\,{ p_2}}{{ \mu_2}}}-{\frac {{ m_2}\,{ p_1}}{{
 \mu_2}}};\\
u_3={ q_3}-{\frac {{ m_2}\,{ q_2}+{ m_1}\,{ q_1}}{{ m_1}+{
m_2}}}, & v_3={\frac {{ \mu_2}\,{ p_3}}{{ \mu_3}}}-{\frac {{ m_3}\, \left( {
 p_2}+{ p_1} \right) }{{ \mu_3}}};\\
u_4={ q_4}-{\frac {{ m_3}\,{ q_3}+{ m_2}\,{ q_2}+{ m_1}\,{
q_1}}{{ m_1}+{ m_2}+{ m_3}}}, & v_4={\frac {{ \mu_3}\,{ p_4}}{{ \mu_4}}}-{\frac {{ m_4}\, \left( {
 p_3}+{ p_2}+{ p_1} \right) }{{ \mu_4}}}.
\end{array}$$
The new Hamiltonian is $$H_2(u_2,u_3,u_4,v_2,v_3,v_4)=\,{\frac {{{ v_2}}^{2}}{{2 M_2}}}+\,{\frac {{{ v_3}}^{2}}{{
2 M_3}}}+\,{\frac {{{ v_4}}^{2}}{{ 2 M_4}}}-U_2.$$
$U_2$ is the corresponding potential energy in the new coordinates and similarly $U_3, U_4$ in the below are the potential energy in the different cooordinates.The new Hamiltonian is independent of $g_4$ and $G_4$, the center of mass and total linear momentum respectively. This reduces the dimension by four from 16 to 12.\\
Next we change to symplectic polar coordinates to eliminate the integrals due to the angular momentum and rotational symmetry. Set
$$u_i=(r_i\cos(\theta_i), r_i\sin(\theta_i))$$ 
$$v_i=(R_i\cos(\theta_i)-\frac{\Theta_i}{r_i}\sin(\theta_i), R_i\sin(\theta_i)+\frac{\Theta_i}{r_i}\cos(\theta_i))$$
for $i=2,3,4$. Then the new Hamiltonian  becomes
$$H_3=\,{\frac {{{ R_2}}^{2}{{ r_2}}^{2}+{{ \Theta_2}}^{2}}{{2 M_2}
\,{{ r_2}}^{2}}}+\,{\frac {{{ R_3}}^{2}{{ r_3}}^{2}+{{
\Theta_3}}^{2}}{{ 2M_3}\,{{ r_3}}^{2}}}+\,{\frac {{{ R_4}}^{2}{{
 r_4}}^{2}+{{ \Theta_4}}^{2}}{{ 2M_4}\,{{ r_4}}^{2}}}-U_3.$$
Note that the Hamiltonian $H_3$ has only terms of difference angles. This suggests making a final symplectic change of coordinates by leaving the radial variables alone. Use the generating function $ S={ \Theta_2}\,{ x_2}+{ \Theta_3}\, \left( {x_3}+{ x_2} \right) +{ \Theta_4}\, \left( { x_4}+{ x_3}+{ x_2} \right)$, and so
 $$\theta_2 = x_2, \theta_3 = x_3+x_2, \theta_4 = x_4+x_3+x_2;$$
 $$ \Theta_2 = X_2-X_3, \Theta_3 = X_3-X_4; \Theta_4 = X_4.$$
 The new Hamiltonian will be independent of $x_2$ which means that $X_2=\Theta_2+\Theta_3+\Theta_4$ (total angular momentum) is an integral, and $x_2$ is an ignorable variable. Setting $X_2=c$ and plugging into the Hamiltonian $H_3$ yields
 $$H_4= \,{\frac {{{ R_2}}^{2}{{ r_2}}^{2}+ \left( {c}-{ X_3}
 \right) ^{2}}{{ 2M_2}\,{{ r_2}}^{2}}}+\,{\frac {{{ R_3}}^{2}{
{ r_3}}^{2}+ \left( { X_3}-{ X_4} \right) ^{2}}{{ 2M_3}\,{{
r_3}}^{2}}}+\,{\frac {{{ R_4}}^{2}{{ r_4}}^{2}+{{ X_4}}^{2}}{{
 2M_4}\,{{ r_4}}^{2}}}-U_4.$$
 This reduces the system to 10 dimensions, with the variables $z=(r_2, r_3, r_4, x_3, x_4, R_2, R_3, R_4,$ $ X_3, X_4)$. \\
 Because $H_4$ is a Hamiltonian system, the monodromy matrix $X(\mathcal{T})$ is symplectic. Its periodic solution $\gamma(t)$ will generate an eigenvector of $X(\mathcal{T})$. In fact, $\gamma(t)$ is a solution of $\dot{z}=J\nabla H_4(z)$ with initial condition $z(0)=\gamma(0)$. Then $\ddot{\gamma}(t)=JD^2H_4(\gamma(t))\dot{\gamma}(t)$. This implies that $\dot{\gamma}(t)$ satisfies the associated linear system
$$\dot{\xi}=JD^2H_4(\gamma(t))\xi,\hspace{0.2cm} \xi(0)=\dot{\gamma}(0).$$
Since $X(t)$ is the fundamental solution of the above linear system, $\dot{\gamma}(t)=X(t)\dot{\gamma}(0)$, which implies $X(\mathcal{T})\dot{\gamma}(0)=\dot{\gamma}(\mathcal{T})=\dot{\gamma}(0)$. Because $X(\mathcal{T})$ is symplectic, $J^{-1}X(\mathcal{T})J= X(\mathcal{T})$. Then $X(\mathcal{T})J\dot{\gamma}(0)=J\dot{\gamma}(0)$. So the Monodromy matrix has two $+1$ multipliers, leaving the remaining eight eigenvalues to determine the linear stability of the periodic solution. Because the eigenvalues of a symplectic matrix occur in quadruples $(\lambda,$ $ \lambda^{-1}, $ $\bar{\lambda}$, $\bar{\lambda}^{-1})$, we have the following lemma. 
\begin{lemma}
Let $X$ be a symplectic matrix and $W=\frac{1}{2}(X+X^{-1})$. Then the eigenvalues of $X$ are all on the unit circle if and only if all of the eigenvalues of $W$ are real and in $[-1,1]$.
\end{lemma}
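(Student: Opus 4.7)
The plan is to reduce everything to a statement about the scalar map $\varphi(\lambda) = \tfrac{1}{2}(\lambda + \lambda^{-1})$ applied to the eigenvalues of $X$. The key observation is that $X$ and $X^{-1}$ commute, so putting $X$ in Jordan form $X = PJP^{-1}$ gives $X^{-1} = PJ^{-1}P^{-1}$, and therefore
\[
W \;=\; \tfrac{1}{2}\bigl(X+X^{-1}\bigr) \;=\; P\,\tfrac{1}{2}\bigl(J + J^{-1}\bigr)\,P^{-1}.
\]
Both $J$ and $J^{-1}$ are upper triangular with diagonals $(\lambda_1,\dots,\lambda_{2n})$ and $(\lambda_1^{-1},\dots,\lambda_{2n}^{-1})$, so $\tfrac{1}{2}(J+J^{-1})$ is upper triangular with diagonal $\varphi(\lambda_i)$. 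Hence the eigenvalues of $W$ are exactly $\mu_i = \tfrac{1}{2}(\lambda_i + \lambda_i^{-1})$, counted with algebraic multiplicity. (Symplecticity of $X$ is not essential for this reduction; it only guarantees that the $\lambda_i$ come in reciprocal pairs, which makes the correspondence compatible with the already-paired structure of the eigenvalues of $W$.)

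With this reduction in hand, both directions become elementary. For the forward implication, if $|\lambda_i|=1$ then $\lambda_i = e^{i\theta_i}$ and $\mu_i = \cos\theta_i \in [-1,1]\subset\mathbb{R}$. For the converse, suppose $\mu_i\in[-1,1]$ is real; then $\lambda_i$ is a root of the quadratic $\lambda^2 - 2\mu_i\lambda + 1 = 0$, whose roots are
\[
\lambda \;=\; \mu_i \pm \sqrt{\mu_i^{2}-1} \;=\; \mu_i \pm i\sqrt{1-\mu_i^{2}},
\]
and these satisfy $|\lambda|^{2} = \mu_i^{2} + (1-\mu_i^{2}) = 1$.

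I expect no real obstacle here; the only subtle point worth being explicit about is that $W$ inherits its eigenvalues from $X$ via $\varphi$ \emph{with multiplicity}, even when $X$ is not diagonalizable. The simultaneous upper-triangularization above (equivalently, using that the Jordan form of $X^{-1}$ shares block sizes with that of $X$) handles this cleanly and avoids any appeal to normality of $X$. The final statement is then just a spectral translation between the unit circle in $\mathbb{C}$ and the interval $[-1,1]$ in $\mathbb{R}$ under $\varphi$, exactly as needed to test spectral stability numerically by checking that the $10\times 10$ matrix $W$ has real spectrum in $[-1,1]$.
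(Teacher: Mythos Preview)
Your proof is correct and follows essentially the same strategy as the paper: both reduce the question to the behavior of the scalar map $\varphi(\lambda)=\tfrac{1}{2}(\lambda+\lambda^{-1})$ on the spectrum of $X$, and both finish by observing that $\varphi$ carries the unit circle onto $[-1,1]$ and its complement onto $\mathbb{C}\setminus[-1,1]$. The only notable difference is that the paper establishes the eigenvalue correspondence by applying $X$ and $X^{-1}$ to a common eigenvector, whereas you simultaneously upper-triangularize via the Jordan form; your version is slightly more careful in that it tracks algebraic multiplicities and does not tacitly assume $X$ is diagonalizable.
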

\begin{proof}
The lemma and its proof are similar to Lemma 4.1 in Roberts' paper \cite{RG}. We prove it here for the sake of completeness. Suppose that $\vec{v}$ is an eigenvector of the symplectic matrix $X$ with eigenvalue $\lambda$, i.e. $X\vec{v}=\lambda \vec{v}$. Then $X^{-1}\vec{v}=\lambda^{-1}\vec{v}$.  $W\vec{v}=\frac{1}{2}(X+X^{-1})\vec{v}=\frac{1}{2}(\lambda+\lambda^{-1})\vec{v}$ from which it follows that $\frac{1}{2}(\lambda+\lambda^{-1})$ is an eigenvalue of $W$. The map $f : \mathcal{C}\mapsto\mathcal{C}$ given by $f(\lambda) = \frac{1}{2}(\lambda+\lambda^{-1})$ takes the unit circle onto the
real interval $[-1, 1]$ while mapping the exterior of the unit disk homeomorphically onto
$\mathcal{C}\backslash[-1, 1]$. The lemma follows this assertion immediately.
\end{proof}
Because the eigenvalue pairs $\lambda$ and $\lambda^{-1}$ of $X$ is mapped to the same eigenvalue $\frac{1}{2}(\lambda+\lambda^{-1})$ of $W$, the multiplicity of eigenvalues of $W$ must be at least two. The two $+1$ multipliers is still mapped to $+1$ with multiplicity two.  The remaining eight non-one eigenvalues on the unit circle of $X$ for linear stable periodic solution have been mapped to four pairs of real eigenvalues in $(-1,1)$.

Numerically, a MATLAB program was written using a Runge-Kutta-Fehlberg method to compute the monodromy matrix $X(\mathcal{T})$ of the  reduced linearized Hamiltonian $H_4$ for the star pentagon choreographic solution presented in figure \ref{fig1}. Then we compute $W=\frac{1}{2}(X+X^{-1})$ and  its eigenvalues.




$\theta=\frac{2\pi}{5}$, $\mathcal{T}=40$, the four pairs of eigenvalues other than the multipliers $+1$ are $[ 0.761537,$  $  0.761537, $  $ 0.235841,$  $ 0.235841,$  $  - 0.299445, $  $ - 0.299445, $  $ - 0.456736, $  $- 0.456736].$ 
 The four pairs of eigenvalues are real and distinct  in $(-1,1)$. Returning to the full monodromy matrix, the corresponding eigenvalues are distinct and on the unit circle. Therefore, the star pentagon choreographic solution is linearly stable.

\section{Classification of solutions and rotation angles $\theta$} \label{sec5}

Recall that a {\it simple choreographic solution} (for short, choreographic solution) is a periodic solution that all bodies chase one another along a single orbit.  If the orbit of a periodic solution consists of two
 closed curves, then it is
called a {\it double-choreographic solution}. If the orbit of a periodic solution consists of different closed curves, each of which is the trajectory of exact one body,  it is called {\it non-choreographic solution}. Theorem \ref{Main2}  can be restated as following theorem with detail classifications.
\begin{theorem}\label{Thm:51}

There exist $\theta_0$ and $\theta_1$  such that $\frac{\pi}{4}<\theta_0<\frac{\pi}{2}<\theta_1<\frac{3\pi}{4}$. For any $\theta\in(\theta_0,\theta_1)$ and $\theta\not=\frac{\pi}{2}$, there exists at least one local minimizer $\vec{a}_0\in \Gamma$ for  the variational problem \eqref{VAR} with the SPBC and its corresponding minimizing path $q^*(t)$ connecting $q(0)$ and $q(T)$ can be extended to a non-circular classical Newtonian  solution $q(t)$ by the same extension as \eqref{qet} in theorem \ref{Thm:ET}.  Each curve $q_i(t), t\in[8kT,(8k+8)T]$ is called a side of the orbit since the orbit of the solution is assembled out the sides by rotation only. The non-circular solution $q(t)$ can be classified as follows.\\
 (1) {\bf [Quasi-Periodic Solutions]}  $q(t)$ is a quasi-periodic solution if $\theta$ is not commensurable with $\pi$. \\
 (2) {\bf [Periodic Solutions]} $q(t)$ is a periodic solution if $\theta=\frac{P}{Q}\pi$, where the positive integers $P$ and $Q$ are relatively prime.
\begin{itemize}
\item  When $Q \equiv 0 \mod 4 $, the periodic solution $q(t)$ is a non-choreographic solution. Each closed curve has $\frac{Q}{4}$ sides. The minimum period is $\mathcal{T}=2QT$.
\item When $Q\equiv 1 \mod 4$,  the periodic solution $q(t)$ is a choreographic solution. The closed curve has $Q$ sides. The minimum period is $\mathcal{T}=8QT$. The four bodies chase each other on the closed curve in the order of $q_1, q_2, q_3, q_4,$ and then $q_1$, i.e. $q_1(t+2QT)=q_2(t),$ $q_2(t+2QT)=q_3(t),$ $q_3(t+2QT)=q_4(t),$ and $q_4(t+2QT)=q_1(t).$
 \item When $Q\equiv 2 \mod 4$,  the periodic solution $q(t)$ is a double-choreographic solution. Each closed curve has $\frac{Q}{2}$ sides. The minimum period is $\mathcal{T}=4QT$. Body $q_1$ chase body $q_3$ on a closed curve and body $q_2$ chase body $q_4$ on another closed curve. $q_1(t+2QT)=q_3(t)$ and $q_3(t+2QT)=q_1(t).$ $q_4(t+2QT)=q_2(t)$ and $q_2(t+2QT)=q_4(t).$
 \item When $Q\equiv 3 \mod 4$,  the periodic solution $q(t)$ is a choreographic solution. The closed curve has $Q$ sides. The minimum period is $\mathcal{T}=8QT$. The four bodies chase each other on the closed curve in the order of $q_1, q_4, q_3, q_2,$ and then $q_1$, i.e. $q_1(t+2QT)=q_4(t),$ $q_4(t+2QT)=q_3(t),$ $q_3(t+2QT)=q_2(t),$ and $q_2(t+2QT)=q_1(t).$

\end{itemize}
(3) {\bf [Linear Stability]} If $\theta=\frac{P}{2P+1}\pi$, the simple choreographic solutions $q(t)$ are linearly stable for $P=2,3,4,\cdots, 15$. If $\theta=\frac{2P-1}{4P},$ the non-choreographic solutions $q(t)$ are linearly stable for $P=3,4,\cdots,8$. If $\theta=\frac{2P-1}{4P+2},$ the double choreographic solutions $q(t)$ are linearly stable for $P=5,6, 7$.
\end{theorem}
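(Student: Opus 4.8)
The proof splits along the three assertions of Theorem~\ref{Thm:51}, and I would carry them out in order, re-using the machinery of Sections~\ref{sec2}--\ref{sec4} with $\theta$ treated as a free parameter rather than fixed at $\tfrac{2\pi}{5}$. \textbf{Part (1): existence of a non-circular extended solution.} First I would check that coercivity (Proposition~\ref{PROP:1}) survives for every $\theta\in(\tfrac{\pi}{4},\tfrac{3\pi}{4})\setminus\{\tfrac{\pi}{2}\}$: as Remark~\ref{remark2}(3) indicates, the only use of $\theta=\tfrac{2\pi}{5}$ there was to guarantee $\mathbf A\cap\mathbf B=\emptyset$, which holds precisely when $\theta\notin\{\tfrac{\pi}{4},\tfrac{\pi}{2},\tfrac{3\pi}{4}\}$, and the angle estimates on the $\alpha_i$ carry over unchanged. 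Lower semicontinuity plus coercivity on the finite-dimensional $\Gamma$ then produce a minimizer $\vec a_0$ of $\tilde{\mathcal A}$. Both the noncollision argument of Theorem~\ref{Thm:NC} (which uses only the abundance of free parameters in the SPBC, not the value of $\theta$) and the gluing computation of Theorem~\ref{Thm:ET} (whose identities \eqref{TT}--\eqref{dqt} rely only on $\cos\theta=\cos2\theta\cos\theta+\sin2\theta\sin\theta$ and $\sin\theta=\sin2\theta\cos\theta-\cos2\theta\sin\theta$) go through verbatim, so the path of $\vec a_0$ is collision-free on $[0,T]$ and extends $C^{1}$ --- hence $C^{2}$ and Newtonian --- by \eqref{qet} to a solution $q(t)$ for all time. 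To force $q(t)$ to be non-circular I would compare actions: the rotating square is realized inside the SPBC family by a distinguished $\vec a^{\circ}\in\Gamma$ with $\tilde{\mathcal A}(\vec a^{\circ})=\mathcal A(q^{\circ})=3\cdot2^{-1/3}U_0^{2/3}T^{1/3}(\theta-\tfrac{\pi}{4})^{2/3}$ (Appendix~A, $U_0=2\sqrt2+1$), and I would exhibit, for $\theta$ in a suitable open subinterval, a piecewise-linear test path $\bar q(\theta)$ with $\tilde{\mathcal A}(\vec a_0)\le\mathcal A(\bar q(\theta))<\mathcal A(q^{\circ})$; one then \emph{defines} $\theta_0$ and $\theta_1$ as the endpoints of the set where this strict inequality holds. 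Since $\mathcal A(q^{\circ})\to0$ as $\theta\to\tfrac{\pi}{4}^{+}$ the inequality fails near $\tfrac{\pi}{4}$, forcing $\theta_0>\tfrac{\pi}{4}$, and symmetrically $\theta_1<\tfrac{3\pi}{4}$; the point $\tfrac{\pi}{2}$ is excised because coercivity breaks there. Whenever $\tilde{\mathcal A}(\vec a_0)<\mathcal A(q^{\circ})$ the minimizing path is not a sub-arc of the circular motion, so its extension is genuinely non-circular.

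\textbf{Part (2): classification via the symmetry group.} Writing the base piece of $q$ on $(0,2T]$ and using that the cyclic permutation $\sigma=[2,3,4,1]$ commutes with the coordinate rotation $R(2\theta)$, the extension \eqref{qet} gives $q(s+2kT)=\sigma^{k}\!\big(q(s)\big)R(2k\theta)$ for all $k\in\mathbf Z^{+}$; in particular $q(s+8T)=q(s)R(8\theta)$ since $\sigma^{4}=\mathrm{id}$, so the whole orbit is assembled from rotated copies of the side $q_i|_{[0,8T]}$. The solution has period $2kT$ exactly when $\sigma^{k}(q(s))R(2k\theta)\equiv q(s)$; for a non-circular $q$ this forces $\sigma^{k}=\mathrm{id}$ (i.e.\ $4\mid k$) and $R(2k\theta)=\mathrm{id}$, since a non-circular planar curve has no nontrivial rotational symmetry --- the one delicate point being the exclusion of an accidental cancellation between $\sigma^{k}$ and $R(2k\theta)$ on this particular minimizer. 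With $\theta=\tfrac{P}{Q}\pi$, $\gcd(P,Q)=1$, the condition $R(2k\theta)=\mathrm{id}$ is $Q\mid k$, so the minimum period is $\mathcal T=2\,\operatorname{lcm}(4,Q)\,T$, equal to $2QT,\,8QT,\,4QT,\,8QT$ according as $Q\equiv0,1,2,3\pmod4$; when $\theta$ is incommensurable with $\pi$ no such $k$ exists and $q$ is quasi-periodic (the rotations $R(8k\theta)$ densely fill a circle). For the chase pattern, take $k=Q$, the smallest multiple of $Q$: then $q(s+2QT)=\sigma^{Q}(q(s))=\sigma^{Q\bmod4}(q(s))$, and reading off $\sigma^{0}=\mathrm{id}$ (four curves permuted only by rotations, hence non-choreographic), $\sigma^{1}$ ($q_1\!\to\!q_2\!\to\!q_3\!\to\!q_4$), $\sigma^{2}=(1\,3)(2\,4)$ ($q_1\!\leftrightarrow\!q_3$, $q_2\!\leftrightarrow\!q_4$, double-choreographic), and $\sigma^{3}=\sigma^{-1}$ ($q_1\!\to\!q_4\!\to\!q_3\!\to\!q_2$) yields precisely the stated choreography types, while the number of sides on a closed curve is $\mathcal T/(8T)$, i.e.\ $Q/4,\,Q,\,Q/2,\,Q$ respectively. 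One also notes that the $\theta$ used in Part (3) all lie in $(\theta_0,\theta_1)$: those families accumulate at $\tfrac{\pi}{2}$ and even their most extreme members (e.g.\ $\theta=\tfrac{2}{5}\pi=0.4\pi$) stay inside, using $\theta_0<0.39\pi$ and $\theta_1>0.55\pi$ from Remark~\ref{rem2}.

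\textbf{Part (3): linear stability.} For each listed rational multiple of $\pi$ --- $\theta=\tfrac{P}{2P+1}\pi$ ($Q=2P+1$ odd, so $q$ is simple choreographic), $\theta=\tfrac{2P-1}{4P}\pi$ ($Q=4P\equiv0$, non-choreographic), $\theta=\tfrac{2P-1}{4P+2}\pi$ ($Q=4P+2\equiv2$, double-choreographic), with $P$ in the indicated ranges --- I would run the reduction of Section~\ref{sec4} to the $10$-dimensional Hamiltonian $H_4$, integrate $\dot\xi=JD^{2}H_4(\gamma(t))\xi$ over one period to obtain the reduced monodromy matrix $X(\mathcal T)$, form $W=\tfrac12(X+X^{-1})$, and verify numerically (Runge--Kutta, as for Figure~\ref{fig1}) that apart from the forced double eigenvalue $+1$ the spectrum of $W$ is real, simple, and contained in $(-1,1)$; by the lemma this is equivalent to all eight nontrivial multipliers of $X(\mathcal T)$ being distinct and on the unit circle, i.e.\ to linear stability, exactly as for the star pentagon in Theorem~\ref{Thm:LS}.

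The main obstacle is Part (1): turning the single numerical action comparison of Section~\ref{sec3} into a statement valid on a whole interval of $\theta$ --- constructing and estimating the test path $\bar q(\theta)$ in a controlled, $\theta$-dependent way so as to actually pin down $\theta_0$ and $\theta_1$ --- together with the companion point in Part (2), namely confirming that the concrete minimizer carries no hidden symmetry that would shorten its period below $2\,\operatorname{lcm}(4,Q)\,T$. Part (3), being a finite list of monodromy computations, is routine once the Section~\ref{sec4} reduction is in place, though of course numerical rather than closed-form.
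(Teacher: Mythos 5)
Your proposal is correct and follows essentially the paper's own route: minimizer existence for general $\theta$ via Proposition \ref{PROP:1} together with Remark \ref{remark2}, reuse of the $\theta$-independent Theorems \ref{Thm:NC} and \ref{Thm:ET}, non-circularity by comparing the explicit circular action of Appendix A with the constant-velocity test path of Appendix B to locate $\theta_0,\theta_1$, classification through $\sigma^{Q}$ combined with $R(2Q\theta)=\mathrm{id}$ giving minimum period $2\,\mathrm{lcm}(4,Q)\,T$ and the stated chase orders, and stability by the reduced-monodromy computation of Section \ref{sec4}. The one place you are marginally looser is the minimal-period step (your appeal to the absence of rotational symmetry of a non-circular curve, versus the paper's assertion that the $Q$ rotated sides are distinct), but you flag this explicitly and it is at the same level of rigor as the paper's own argument.
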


\begin{proof} We first observe  that the proposition \ref{PROP:1} of existence of minimizers in space $\Gamma$ still holds for $\theta\in(\theta_0,\theta_1)$ and $\theta\not=\frac{\pi}{2}$ by remark \ref{remark2}. The proof of the non-collision theorem \ref{Thm:NC} does not depend on $\theta$ and the extension property \eqref{qet} in theorem \ref{Thm:ET} is also independent of the rotation angle $\theta$.
Now for given $\theta$, we compare the action of the test path $\bar{q}(t)$ with constant velocity for $\vec{a} =[   1.0597,$ $    1.7696,$  $  0.8094,$ $  0.7536,$ $    1.1032,$ $    2.4398]$  and the action of the path $q^\circ(t)$ for $\vec{a}^\circ$ which is extended to a circular motion. The test path is constructed by connecting $q(0)$ and $q(T)$ by straight line segment with constant velocity. Both actions $\mathcal{A}(q^\circ(t))$ and $\mathcal{A}(\bar{q}(t))$ are explicit continuous functions of $\theta$  given by formula \eqref{actCir} and \eqref{actTest} respectively (see dashed line and dashdotted line in Figure \ref{fig2}). Both calculations of the functions  are provided in appendix A and appendix B.
\begin{figure} 
\includegraphics[height=7cm,width=.8\textwidth]{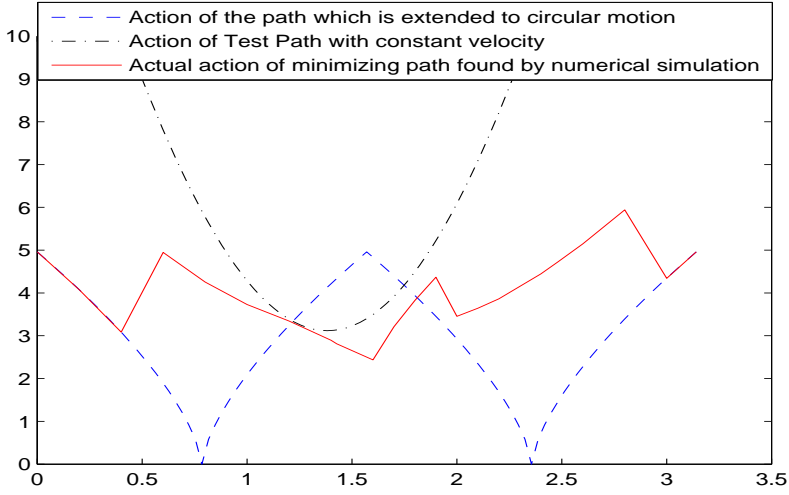}
\caption{Action levels of test path with constant velocity (black), circular motion (blue), and minimizing path (red).}
\label{fig2}\end{figure}
So there exist $\theta_0$ and $\theta_1$ such that for $\theta\in (\theta_0,\theta_1)$, the action of the test path is smaller than the action of the path $q^\circ(t)$. Numerically, $1.1938<\theta_0<1.2252$ or $0.38\pi<\theta_0<0.39\pi$. $1.7279<\theta_1<1.7593$ or $0.55\pi<\theta_1<0.56\pi$. Therefore there exists a local minimizer with smaller action and the corresponding minimizing path $q^*(t)$ on $[0,T]$ can be extended to a non-circular solution $q(t)$ as in theorem \ref{Thm:ET} and $$q(t)=\sigma^{k}(q(t-2kT))R(2k\theta) \hbox{ for } t\in (2kT,(2k+2)T]
  \hbox{ and } k\in \mathbf{Z}^+.$$
   (1) By the extension formula, it is easy to show that $q(t)$ is a quasi-periodic solution if $\theta$ is not commensurable with $\pi$. \\
 (2) If $\theta$ is commensurable with $\pi$ and $\theta=\frac{P}{Q}\pi$ where the positive integers $P$ and $Q$ are relatively prime, then $q(t)=\sigma^{4Q}(q(t-8QT))R(2P\pi)=q(t-8QT)$ which implies that $q(t)$ is a periodic solution.
 For $0\leq k<Q$, the trajectory sets $\{q_i(t)| t\in(2k,(2k+2)T)\}$  $i=1,2,3,$ and $4$ are all different since the rotation matrix $R(2k\frac{P}{Q}\pi)$ is not identity matrix. The four trajectories on which the four body travel  in $t\in(0,2QT)$ are all different.
 \begin{itemize}
 \item When $Q\equiv 0 \mod 4$, $\sigma^{Q}=\sigma^0=[1,2,3,4]$ and $q(t)=q(t-2QT)$. So the four different trajectories in $t\in(0,2QT)$  are closed on their own at $t=2QT$, i.e. $q(2QT)=q(0)$. The periodic solution  is non-choreographic and the minimum period is $\mathcal{T}=2QT$. Each close curve has $\frac{2QT}{8T}=\frac{Q}{4}$ sides. In particular, when $Q=4$, each closed curve is circle-like (one side); when $Q=8$, each closed curve is ellipse-like (two sides); when $Q=12$, each closed curve is triangle-like (three sides); and so on. See figure \ref{fig5}.
 \item When $Q\equiv 1 \mod 4$, $\sigma^{Q}=\sigma^{1}=[2,3,4,1]$ and $q(t)=\sigma(q(t-2QT))$. The four different trajectories in $t\in(0,2QT)$ are connected at $t=2QT$ as $q_1(2QT)=q_2(0)$, $q_2(2QT)=q_3(0)$, $q_3(2QT)=q_4(0)$, and $q_4(2QT)=q_1(0)$, and they form a closed orbit. Since $\sigma^{4Q}=[1,2,3,4]$ and $q(t)=q(t-8QT)$, $q(t)$ is a simple choreographic solution with minimum period $\mathcal{T}=8QT$ and it has $\frac{8QT}{8T}=Q$ sides. In particular, when $Q=5$ and $P=2$, the orbit is a star pentagon (five sides) (See figure \ref{fig1}). When $Q=9$ and $P=4$, the orbit is a star nonagon (nine sides). When $Q=13$ and $P=6$, the orbit is a star tridecagon (thirteen sides); and so on. See figure \ref{fig6}.

 \item  When $Q\equiv 2 \mod 4$, $\sigma^{Q}=\sigma^{2}=[3,4,1,2]$ and $q(t)=\sigma^2(q(t-2QT))$. The four different trajectories in $t\in(0,2QT)$ are connected at $t=2QT$ in two pairs, i.e. $q_1(2QT)=q_3(0)$ and $q_3(2QT)=q_1(0)$,  $q_2(2QT)=q_4(0)$ and $q_4(2QT)=q_2(0)$. So they form two closed orbits. Since $\sigma^{2Q}=[1,2,3,4]$ and $q(t)=q(t-4QT)$, $q(t)$ is a double-choreographic solution with minimum period $\mathcal{T}=4QT$ and each closed orbit has $\frac{4QT}{8T}=\frac{Q}{2}$ sides. In particular, when $Q=10$ and $P=3$, the orbit is the combination of two flowers with five petals each. See figure \ref{fig7}

     \item When $Q\equiv 3 \mod 4$, $\sigma^{Q}=\sigma^{3}=[4,1,2,3]$ and $q(t)=\sigma^3(q(t-2QT))$. The four different trajectories in $t\in(0,2QT)$ are connected at $t=2QT$ as $q_1(2QT)=q_4(0)$, $q_4(2QT)=q_3(0)$, $q_3(2QT)=q_2(0)$, and $q_2(2QT)=q_1(0)$, and they form a closed orbit. Since $\sigma^{4Q}=[1,2,3,4]$ and $q(t)=q(t-8QT)$, $q(t)$ is a simple choreographic solution with minimum period $\mathcal{T}=8QT$ and it has $\frac{8QT}{8T}=Q$ sides. In particular, when $Q=7$ and $P=3$, the orbit is a star heptagon (seven sides). See figure \ref{fig8}.

\end{itemize}
(3) We numerically compute the Monodromy matrix $X(\mathcal{T})$ of the reduced Hamiltonian system $H_4$ as in section \ref{sec4}. Then we compute $W=\frac{1}{2}(X+X^{-1})$ and  its eigenvalues for the case of  $\theta=\frac{P}{2P+1}\pi$, $P=2,3,4,\cdots, 15$ and we list for $P=3,4,15$ here.  The four pairs of eigenvalues are real and distinct  in $(-1,1)$. Returning to the full monodromy matrix, the corresponding eigenvalues are distinct and on the unit circle. Therefore, the corresponding choreographic solutions are linearly stable. By the same way, we prove the stability for other cases and we list the results for $\theta=\frac{5\pi}{12}, $ and $ \frac{9\pi}{22}.$  


$\theta=\frac{3\pi}{7}$, $\mathcal{T}=56$, the four pairs of eigenvalues other than the multipliers $+1$ are $ [  - 0.375476, $ $ - 0.375476, $ $ 0.493924, $ $   0.493924, $ $  0.623185, $ $  0.623185, $ $  0.698755,$ $  0.698755].$ 


$\theta=\frac{4\pi}{9}$, $\mathcal{T}=72$, the four pairs of eigenvalues other than the multipliers $+1$ are $[- 0.888315, $ $ - 0.888315, $ $ 0.717492,$ $   0.717492,$ $  0.781167, $ $  0.781167,$ $  0.875241, $ $  0.875241].$ 


 $\theta=\frac{15\pi}{31}$, $\mathcal{T}=248$, the four pairs of eigenvalues other than the multipliers $+1$ are  $[  - 0.761943,$ $   - 0.761943, $ $  - 0.0535079, $ $  - 0.0535079, $ $   - 0.375899,$ $    - 0.375899, $ $ 0.994815,$ $   0.994815].$ 


$\theta=\frac{5\pi}{12}$, $\mathcal{T}=24$, the four pairs of eigenvalues other than the multipliers $+1$ are  $[ - 0.752385,$ $  - 0.752385, $ $ 0.786314, $ $  0.786314,$ $  0.850377, $ $  0.850377, $ $ 0.845072, $ $  0.845072].$ 


 $\theta=\frac{9\pi}{22}$, $\mathcal{T}=88$, the four pairs of eigenvalues other than the multipliers $+1$ are  $[- 0.612649,$ $ - 0.612649, $ $ - 0.791503, $ $- 0.791503,$ $  - 0.967491, $ $ - 0.967491, $ $ - 0.99911,$ $  - 0.99911]. $ 


\end{proof}


\section*{Appendix A: Action of the path which is extended to a circular solution}
The configuration $q$ is called a central configuration if $q$ satisfies the following nonlinear
algebraic equation system:
\begin{equation}\label{CC}
\lambda (q_i-c)-\sum_{j=1,j\not= i}^{n} \frac{m_j(q_i-q_j)
}{|q_i-q_j|^3}=0, \hspace{1cm} 1\leq i\leq n,
\end{equation}
for a constant $\lambda$, where $c=(\sum m_i q_i)/M$ is the center
of mass and $M=m_1+m_2+\cdots +m_n$ is the total mass. We recall the fact that coplanar central configurations always admit homographic solutions where each body executes a similar Keplerian ellipse of eccentricity $e$, $0\leq e\leq 1$. When $e=0$, the relative equilibrium solutions are consisting  of uniform circular motion for each of the masses about the common center of mass. When $e=1$, the homographic solutions degenerate to a homothetic solution which includes total collision, together with a symmetric segment of ejection. Gordon found that for fixed period $\mathcal{T}^{\circ}$, all of the homographic solutions have the same action (\cite{GW}).  Consider the circular solution of the four-body problem with equal masses  $$q_{k}^\circ (t)=r(cos(\omega t+\rho_k),sin(\omega t+\rho_k)), k=1, 2,3, 4,$$
where $r>0$ is the radius of the circle and $\rho_k=\frac{k\pi}{2}$. We can easily find the relation between $\omega$ and $r$ by Newtonian equations \eqref{Newton}:
$$r^3\omega^2=\frac{1}{4}U_0,$$
where $U_0=2\sqrt{2}+1$ is the potential energy of the four equal masses at the square configuration on a unit circle. The minimum period is $\mathcal{T}^{\circ}=\frac{2\pi}{\omega}$.  The minimum value of the  action functional \eqref{Min} (realized by the circular solution) could be computed $$\mathcal{A}(q(t))=\int_0^{\mathcal{T}^\circ}\sum_{k=1}^4\frac{1}{2}m_k|\dot{q}_k(t)|^2 +U(q(t))dt =(2r^2\omega^2+\frac{U_0}{r}){\mathcal{T}^\circ}$$ $$=\frac{3 U_0}{2 r}{\mathcal{T}^\circ}=3(2)^{\frac{1}{3}}U_0^{\frac{2}{3}}\pi^{\frac{2}{3}}{(\mathcal{T}^\circ)}^{\frac{1}{3}}.$$

  \begin{figure}
\includegraphics[height=6cm,width=.45\textwidth]{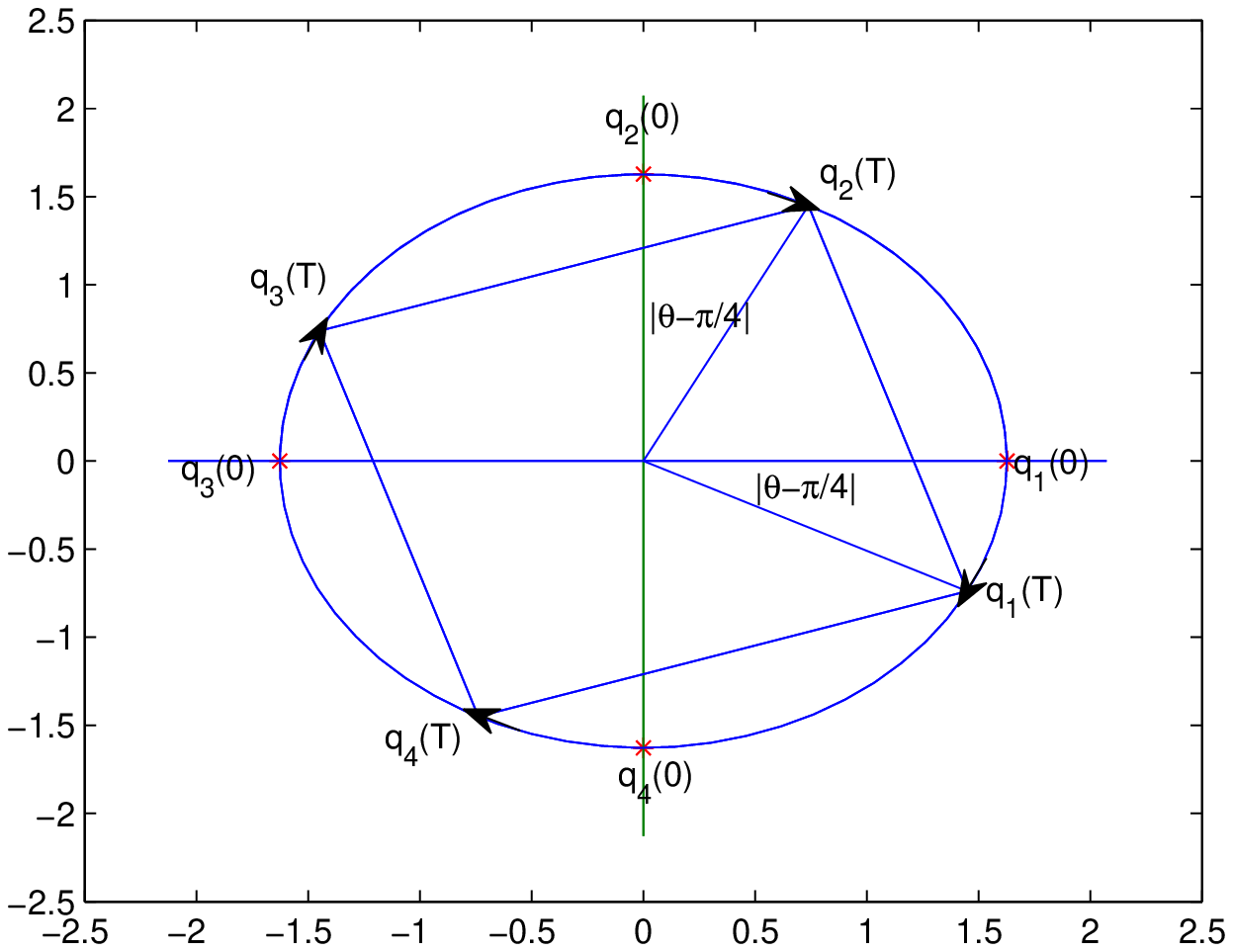}
\includegraphics[height=6cm,width=.45\textwidth]{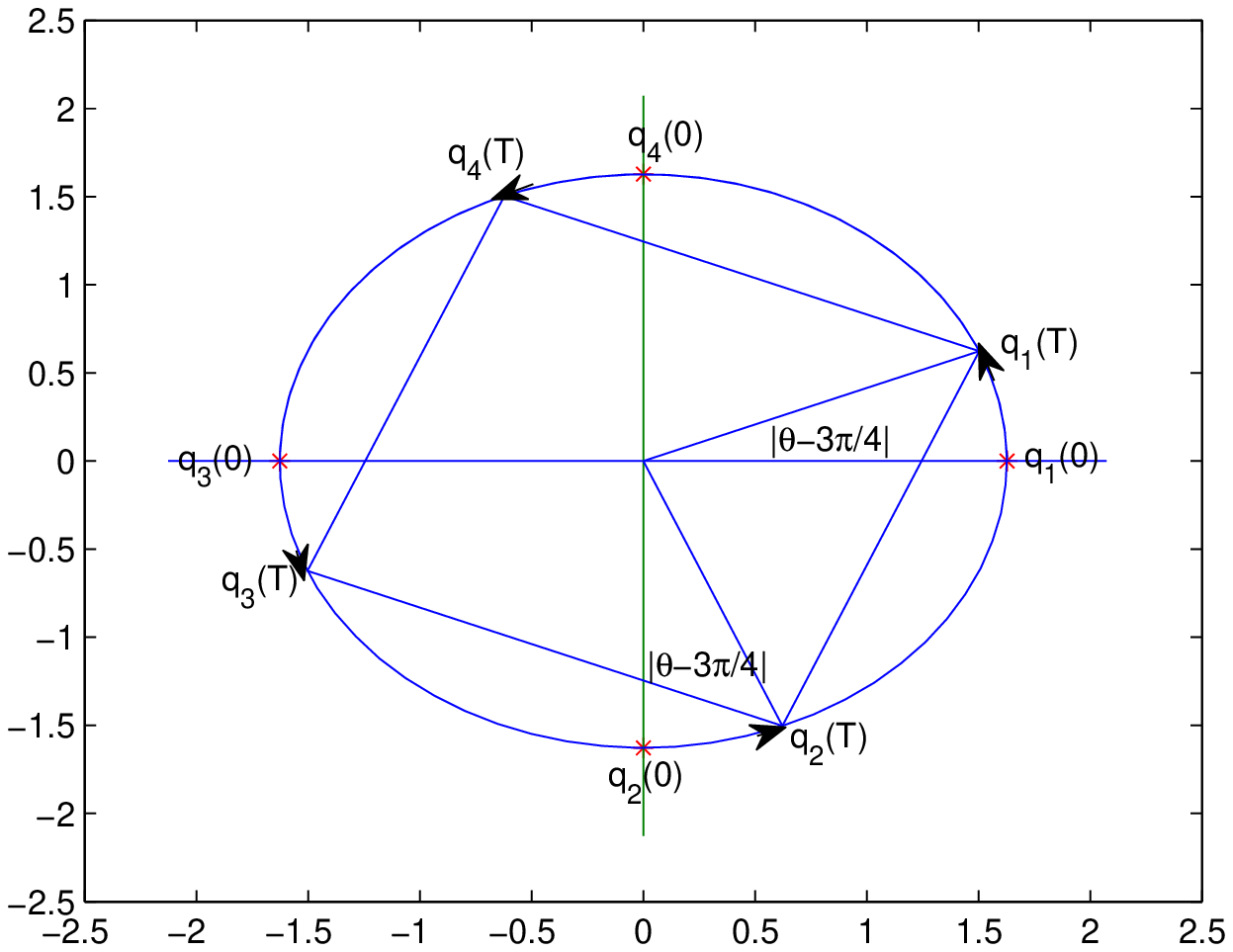}
\caption{ Left: For $\frac{\pi}{4}<\theta<\frac{\pi}{2}$, $\vec{a}^{\circ}=[a_1, 0, -a_1, \frac{\sqrt{2}a_1}{2},-\frac{\sqrt{2}a_1}{2},\frac{\sqrt{2}a_1}{2}]$ with $a_1>0$ and $\alpha=|\theta-\frac{\pi}{4}|$. Right: For $\frac{\pi}{2}<\theta<\frac{3\pi}{4}$, $\vec{a}^{\circ}=[a_1, 0, a_1, \frac{\sqrt{2}a_1}{2},\frac{\sqrt{2}a_1}{2},-\frac{\sqrt{2}a_1}{2}]$ with $a_1>0$ and $\alpha=|\theta-\frac{3\pi}{4}|$.
  }\label{Apa}\end{figure}
If the bodies rotate an angle $\alpha$ in the time interval $[0,T]$, $\omega=\frac{\alpha}{T}$ and $\mathcal{T}^\circ=\frac{2\pi T}{\alpha}$ and the action over the time interval $[0,T]$ is
$$ 3(2)^{\frac{1}{3}}U_0^{\frac{2}{3}}\pi^{\frac{2}{3}}{(\mathcal{T}^\circ)}^{\frac{1}{3}}\frac{\alpha}{2\pi}= 3(2)^{-\frac{1}{3}}U_0^{\frac{2}{3}}{T}^{\frac{1}{3}}\alpha^{\frac{2}{3}}\approx 5.8271766 {T}^{\frac{1}{3}}\alpha^{\frac{2}{3}}.$$
For fixed $T>0$, the configurations generating a circular solution should be always a square  and the bodies rotate an angle $\alpha$ as small as possible in order to have a minimizing action over the time interval $[0,T]$. By the structure of our prescribed boundary conditions, if $\frac{\pi}{4}<\theta<\frac{\pi}{2}$, the angle that the bodies rotate in the time interval $[0,T]$ is $\alpha=\theta-\frac{\pi}{4}$ (see the left graph of Figure \ref{Apa}). And  the SPBC is given by  $\vec{a}^{\circ}=[a_1, 0, -a_1, \frac{\sqrt{2}a_1}{2},-\frac{\sqrt{2}a_1}{2},\frac{\sqrt{2}a_1}{2}]$  which generates a circular solution with $r=a_1, \omega=\frac{\theta-\frac{\pi}{4}}{T}$.  The radius $a_1=U_0^{1/3}T^{2/3}(2\alpha)^{-2/3}$ of the circular solution is uniquely determined by $T$ and the corresponding angle.
 So the exact formula for action $\mathcal{A}_{\vec{a}^{\circ}}(q^{\circ}(t))$ of the path $q^{\circ}(t)$ in the time $[0, T]$ which can generate a circular motion with period $\mathcal{T}^\circ$ is
\begin{equation}\label{actCir}
\mathcal{A}(q^{\circ}(t))= 3(2)^{-\frac{1}{3}}U_0^{\frac{2}{3}}{T}^{\frac{1}{3}}\left( \theta-\frac{\pi}{4}  \right)^{\frac{2}{3}},
\end{equation}
where $\mathcal{T}^{\circ}=\frac{2\pi T}{\theta-\frac{\pi}{4}}$. For $T=1$ and $\theta=\frac{2\pi}{5}$, $a_1=1.6272$, and  $$\mathcal{A}(q^{\circ}(t))\approx 3.528734094.$$
If $\frac{\pi}{2}<\theta<\frac{3\pi}{4}$, the angle that the bodies rotate in the time interval $[0,T]$ is $\alpha=|\theta-\frac{3\pi}{4}|$ (see the right graph of Figure \ref{Apa}). And  the SPBC is given by  $\vec{a}^{\circ}=[a_1, 0, a_1, \frac{\sqrt{2}a_1}{2},\frac{\sqrt{2}a_1}{2},-\frac{\sqrt{2}a_1}{2}]$. For other $\theta$, we have similar results. If $\frac{3\pi}{4}<\theta<\pi$, $\alpha=|\theta-\frac{3\pi}{4}|$. If $0<\theta<\frac{\pi}{4}$, $\alpha=|\theta-\frac{\pi}{4}|$.

\section*{Appendix B: Action of Test Path  with Constant Velocity}

Given $\theta, T $ and $\vec{a}=(a_1,a_2,\cdots,a_6)$, the test path $\bar{q}(t)$ with constant velocity connecting the structural prescribed boundary conditions is given by

$$\bar{q}(t)=Qstart+\frac{t(Qend-Qstart)}{T}, t\in[0,T].$$
\begin{equation}\label{actTest}
 \begin{array}{ll}
& \mathcal{A}(\bar{q}(t))= \sum_{k=1}^4\frac{1}{2T}m_k\|Qend_k-Qstart_k\|^2 \\
\\
  &+\int_0^T  \sum_{1\leq k<j\leq 4} \frac{m_k m_j}{\|(Qstart_k-Qstart_j)(1-\frac{t}{T})+(Qend_k-Qend_j)\frac{t}{T}\|}dt,
 \end{array} \end{equation}
where the integrals only involve the form of $\int_0^T\frac{m_km_j}{(a+bt+ct^2)^{1/2}} dt $ which can be integrated explicitly by trigonometric substitution. In figure \ref{fig2}, the action of test path is computed by assuming $\vec{a}=[1.0597,$ $    1.7696,$ $    0.8094,$ $    0.7536,$ $    1.1032,$ $    2.4398]$ with $T=1$ and the action is an explicit function of $\theta$. 
For example, for $T=1$ and $\theta=\frac{2\pi}{5}$, $\mathcal{A}(\bar{q}(t))= 1.0633+ \int_0^1 \frac{1}{(7.7742-6.1698*t+3.2638*t^2)^{1/2}}+\cdots +\frac{1}{(21.3675+2.22208*t+.2208*t^2)^{1/2}}=3.2484.$

\section*{Appendix C: Numerical simulations for the orbits with different rotation angle $\theta$.}
Here we present some numerical simulations for the orbits with different rotation angle $\theta$. All the orbits are extended from the initial four pieces connecting from $q(0)$ to $q(T)$ by extension formula \eqref{qet}. We use $T=1$ in our calculation. As we point out in Remark \ref{rem2}, non-circular minimizers exist for $\theta$ out of the interval $[\theta_0,\theta_1]$. Most of the figures can be generated in any Newtonian $n$-body simulation program by using these initial data. However, some examples are highly unstable and it is hard to produce satisfactory numerical figures. We list the initial conditions for some stable orbits.

$\theta=\frac{3\pi}{7}$,  $q_1(0)= [  0.9421459089,  2.189431278], $ $\dot{q}_1(0)=[ -0.4908870906,      -0.474846006]$,
 $q_2(0)=[             0, -1.300514651], $ $\dot{q}_2(0)=[1.039544889, 0],$  $q_3(0)= [ -0.9421459089,  2.189431278],$ $\dot{q}_3(0)=[ -0.4908437595,      0.4748326024],$ $q_4(0)= [             0, -3.078347905],$ $\dot{q}_4(0)=[ -0.05781403894,   0].$

$\theta=\frac{5\pi}{12}$,  $q_1(0)=[  0.9885667998,  1.984831768], $ $\dot{q}_1(0)=[-0.5187341985,      -0.4460463326]$,
 $q_2(0)=[ 0, -1.067317853], $ $\dot{q}_2(0)=[  1.064058961, 0],$  $q_3(0)= [  -0.9885667998,  1.984831768],$ $\dot{q}_3(0)=[ -0.5187107584,       0.4460354291],$ $q_4(0)= [             0, -2.902345684],$ $\dot{q}_4(0)=[  -0.02661400412,    0].$

 $\theta=\frac{9\pi}{22}$,  $q_1(0)=[  1.020078100,   1.878808307], $ $\dot{q}_1(0)=[  -0.5352327448,     -0.4256795762]$, $q_2(0)=[    0, -0.9422097516], $ $\dot{q}_2(0)=[  1.078223783,0],$  $q_3(0)= [ -1.02007810,   1.878808307],$ $\dot{q}_3(0)=[-0.5352124256,      0.4256692972],$ $q_4(0)= [          0,  -2.815406862],$ $\dot{q}_4(0)=[   -0.007778613051,   0].$

\begin{figure}
\includegraphics[height=5cm,width=.32\textwidth]{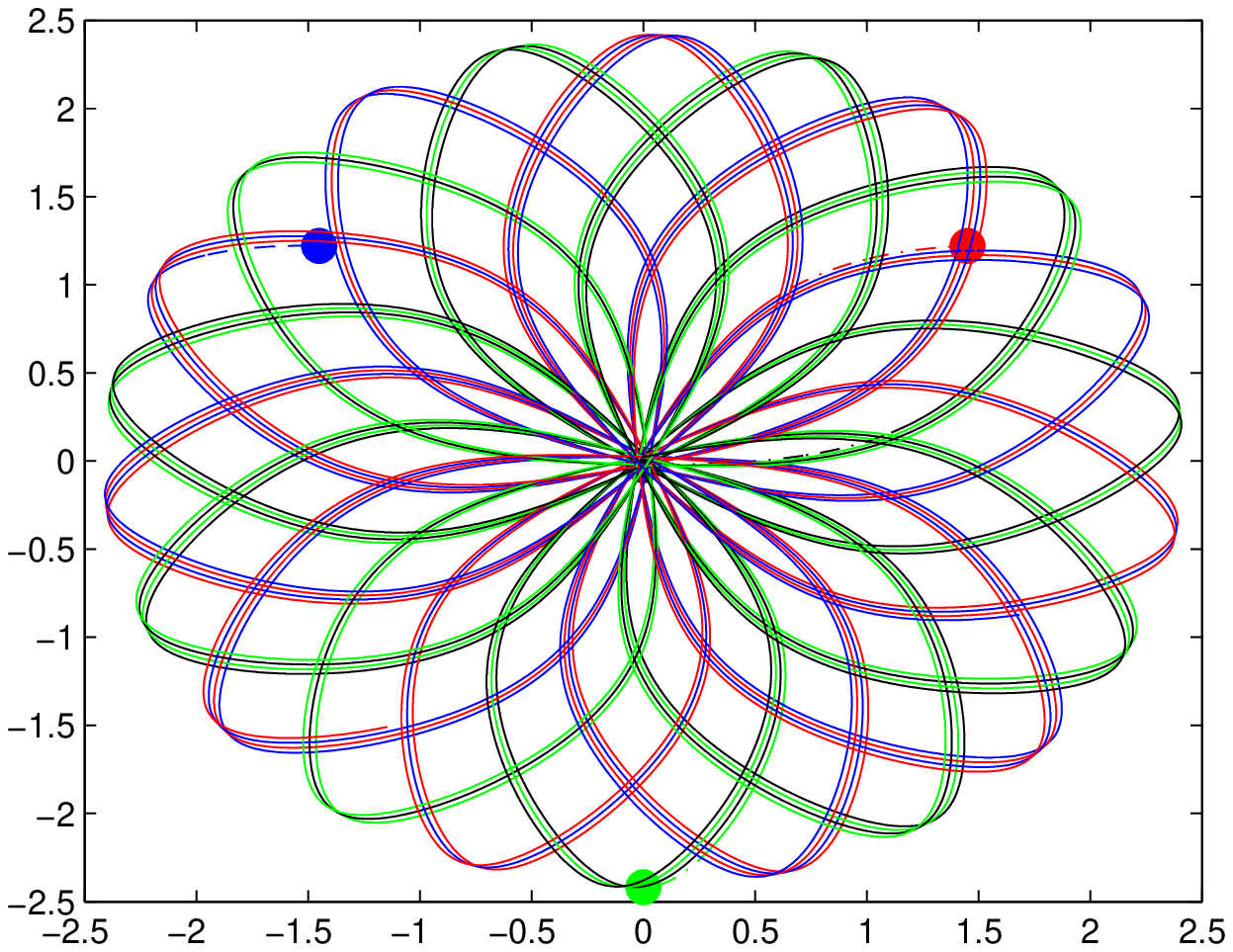}
\includegraphics[height=5cm,width=.32\textwidth]{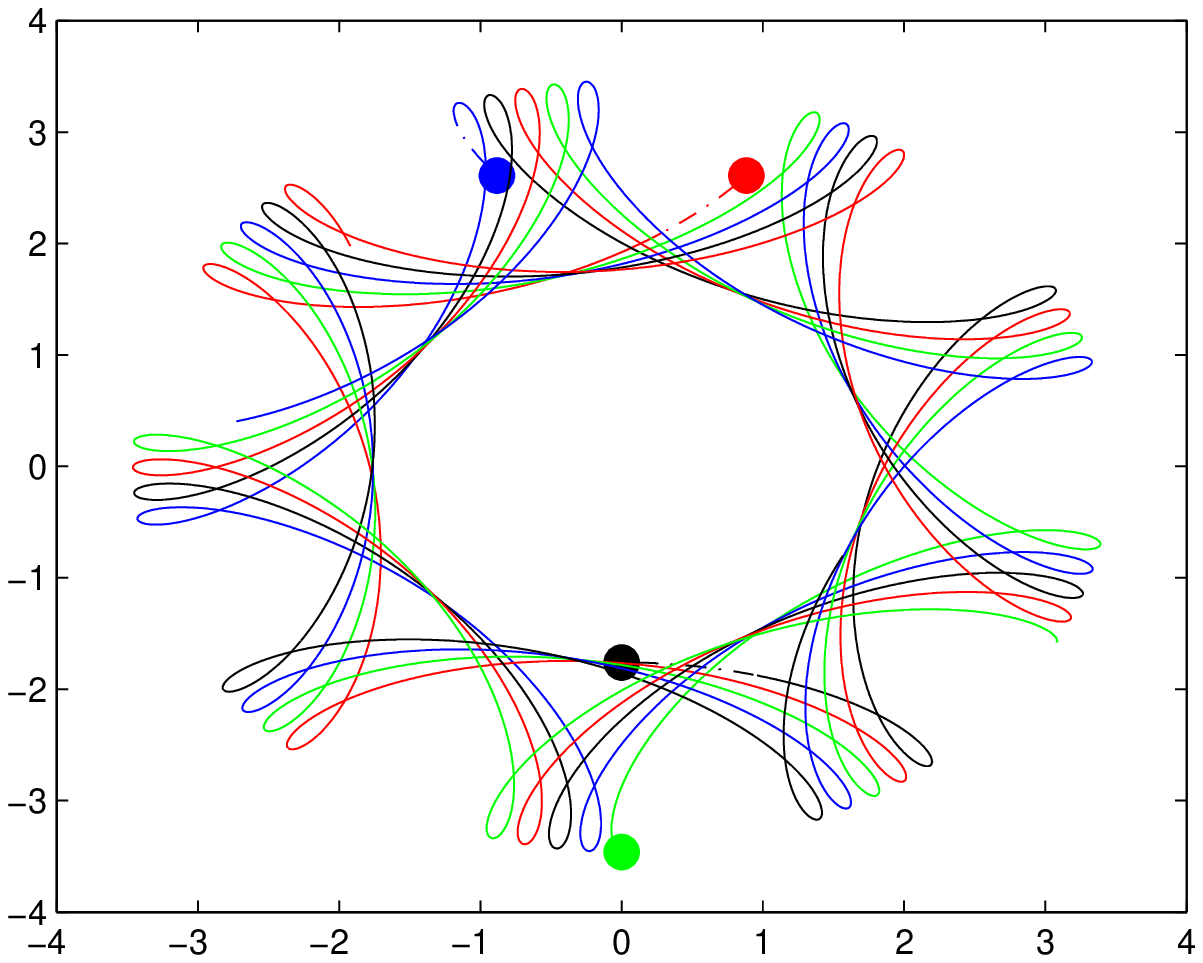}
\includegraphics[height=5cm,width=.32\textwidth]{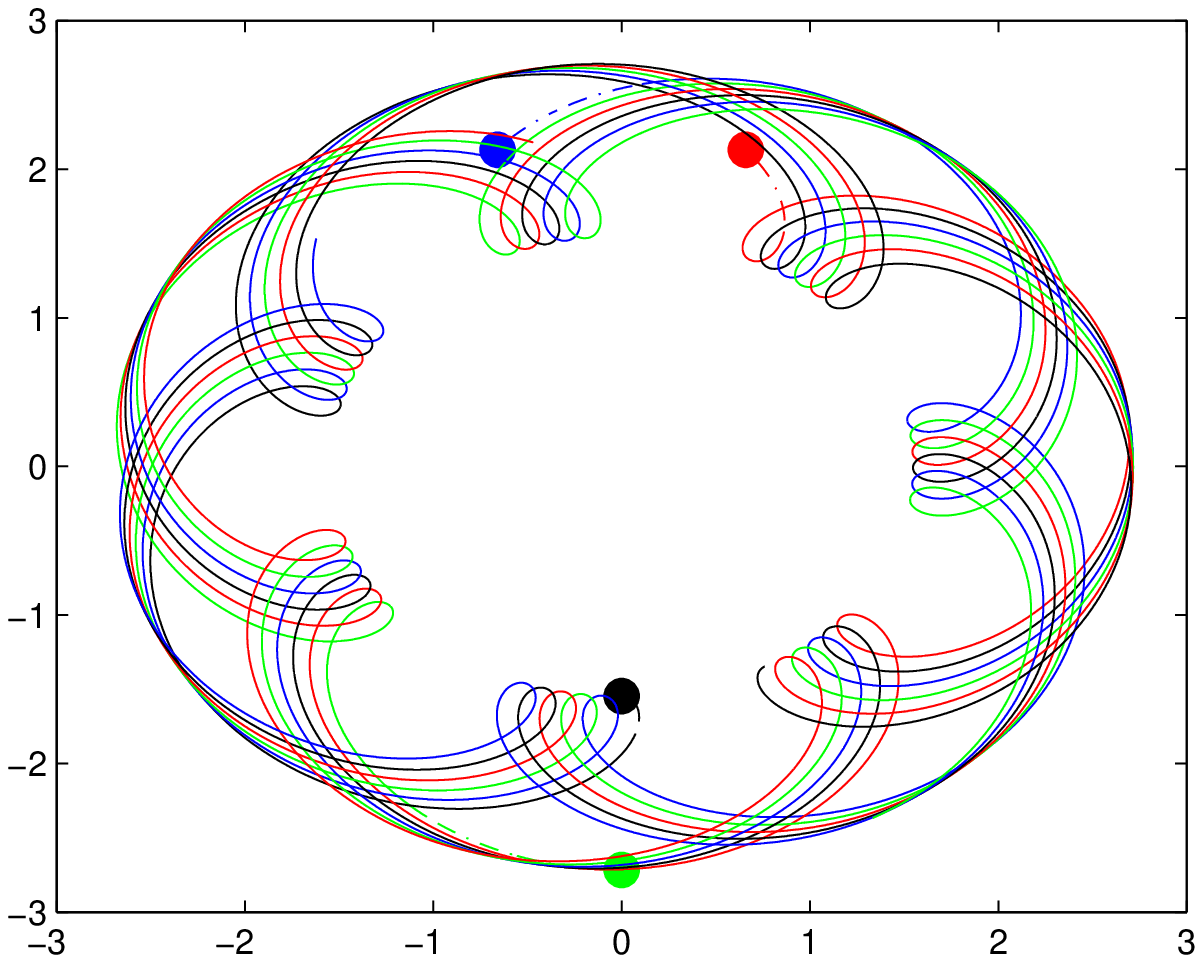}
\caption{\small Quasi-Periodic Solutions on $[0,80T]$. From left to right $\theta=1.0, 1.4, 1.8.$ }
\label{fig9}\end{figure}

\begin{figure}
\includegraphics[height=5cm,width=.32\textwidth]{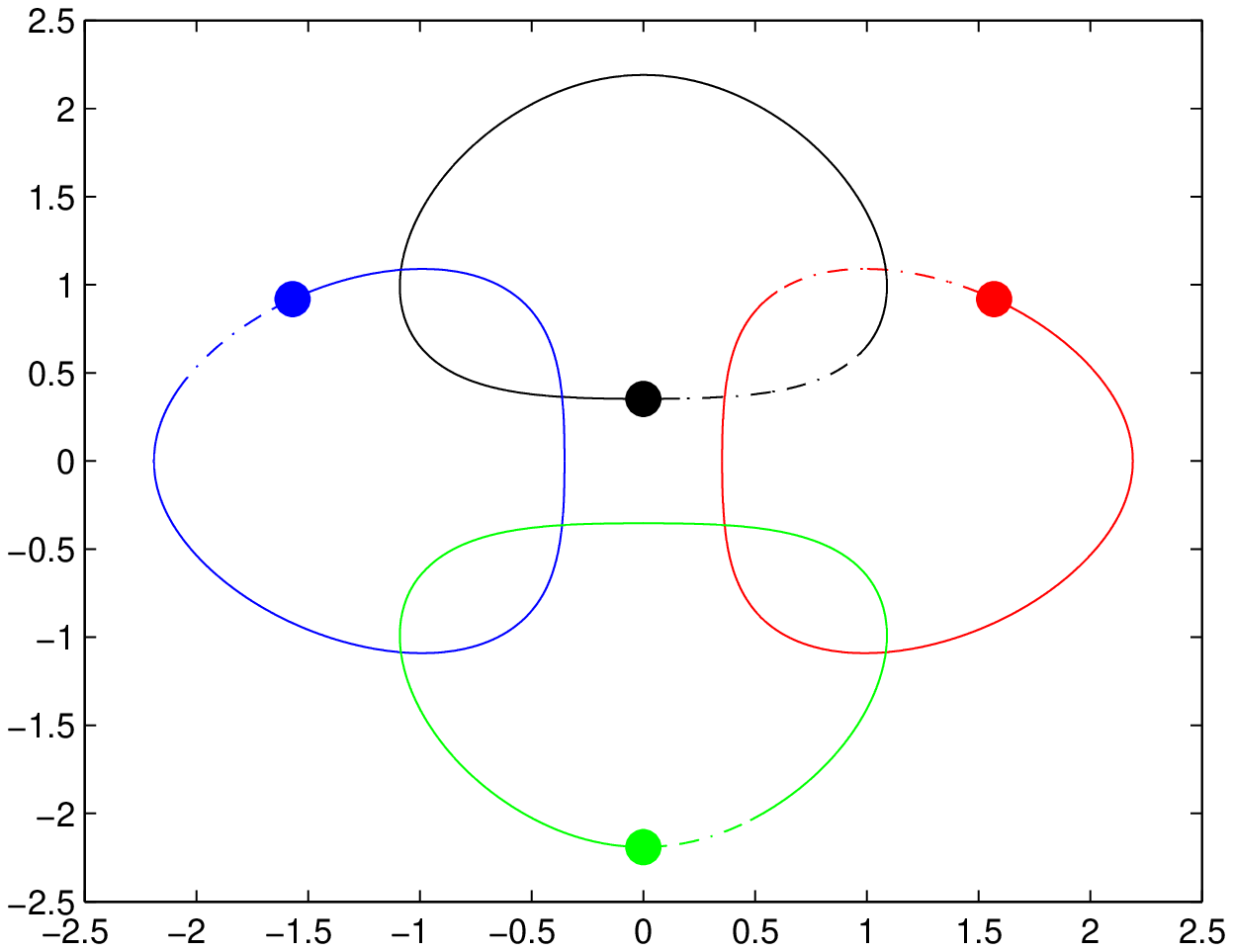}
\includegraphics[height=5cm,width=.32\textwidth]{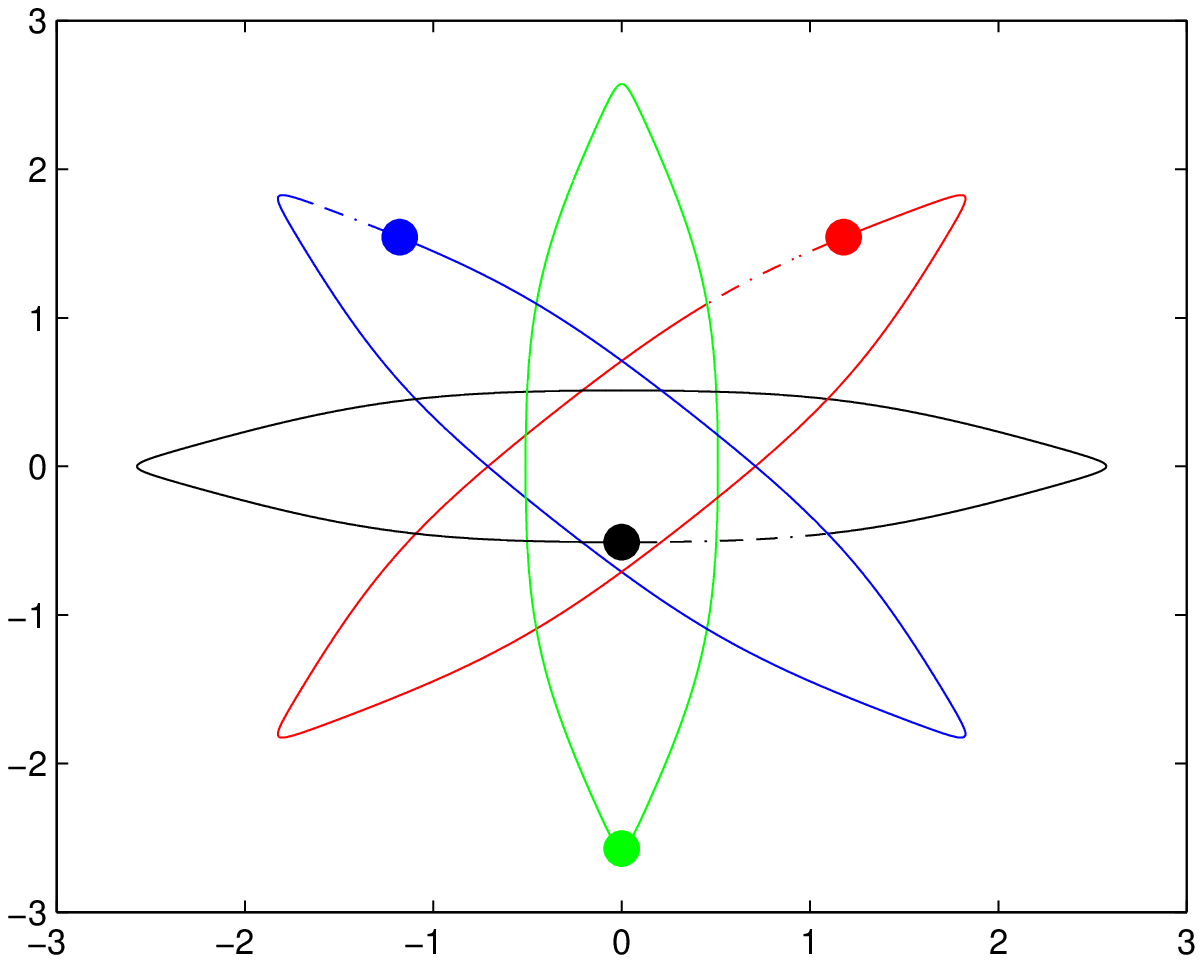}
\includegraphics[height=5cm,width=.32\textwidth]{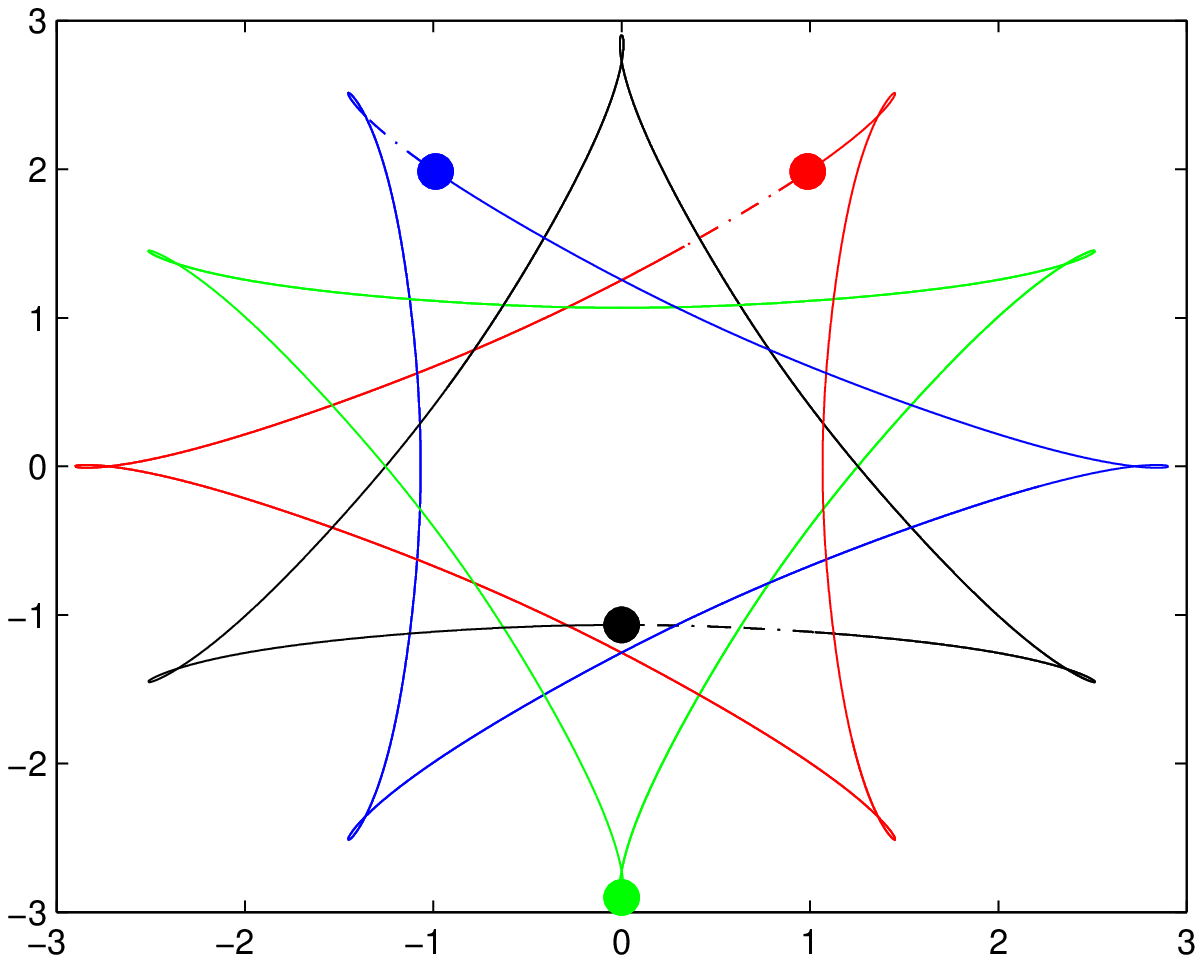}
\caption{\small Non-Choreographic Periodic Solutions  for $\theta = \frac{P}{Q}\pi$ with $Q\equiv 0 \mod 4$. From left to right $\theta= \frac{\pi}{4},$ $\frac{3\pi}{8},$ and  $\frac{5\pi}{12}$. }
\label{fig5}\end{figure}

\begin{figure}
\includegraphics[height=5cm,width=.32\textwidth]{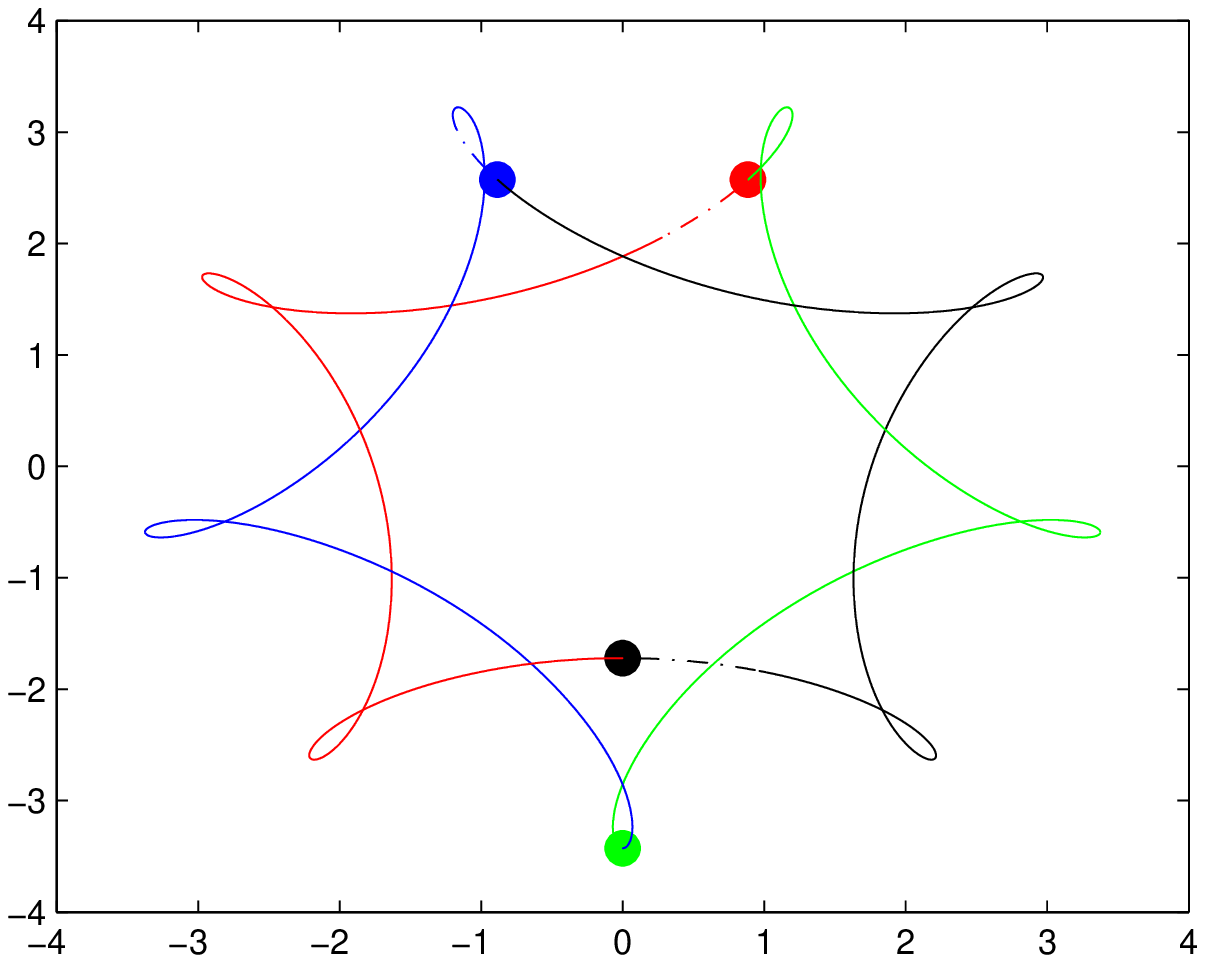}
\includegraphics[height=5cm,width=.32\textwidth]{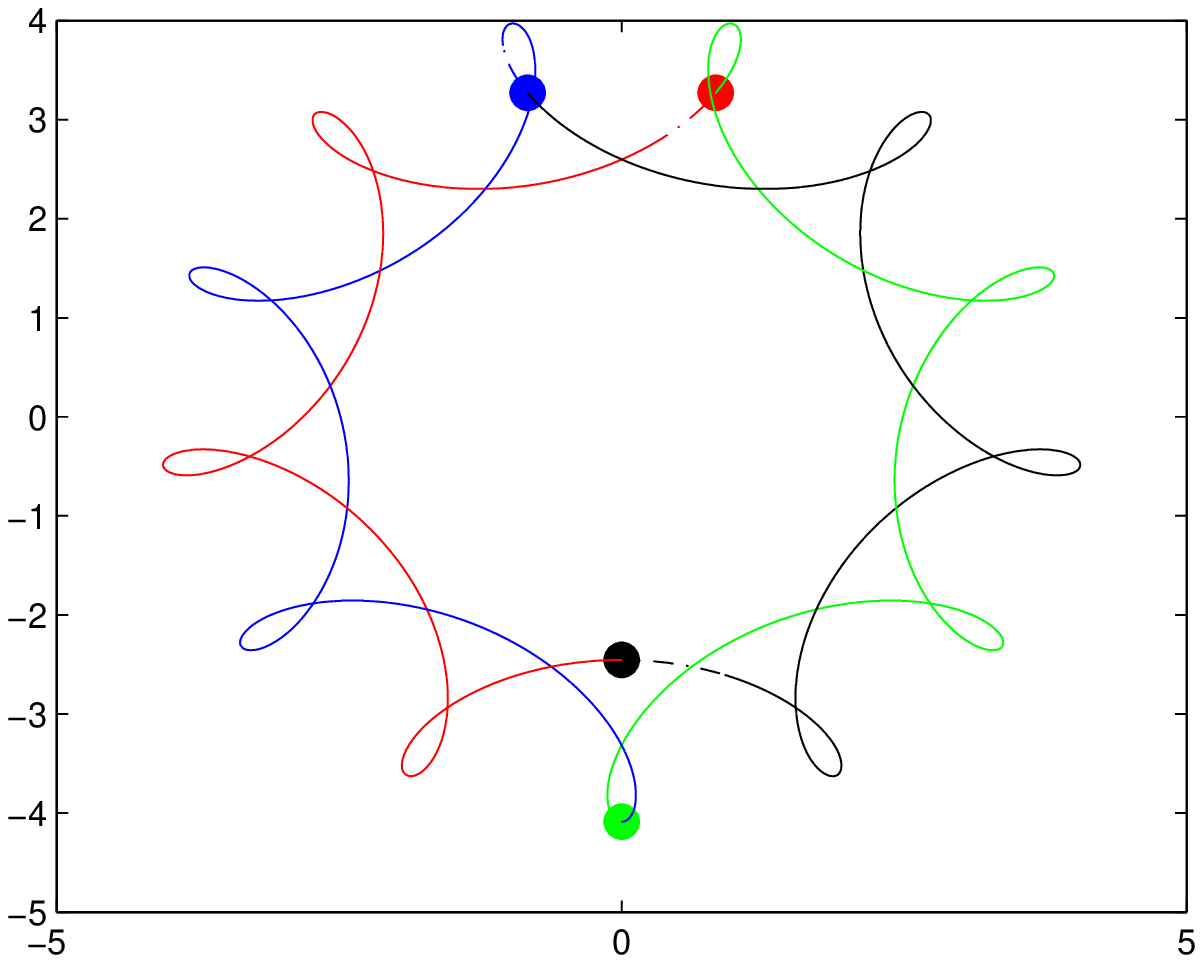}
\includegraphics[height=5cm,width=.32\textwidth]{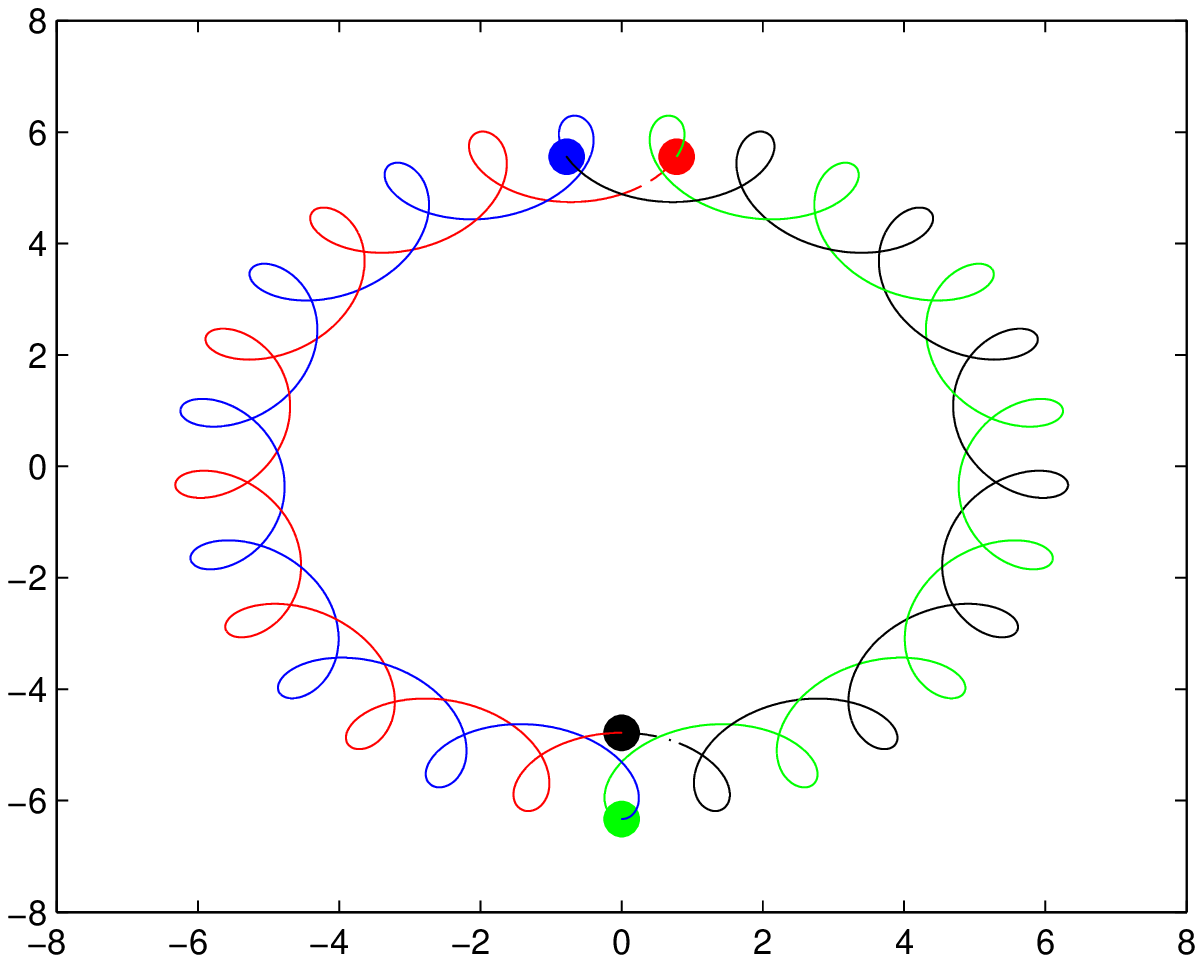}
\caption{\small Simple Choreographic Solutions for $\theta = \frac{P}{Q}\pi$ with $Q\equiv 1 \mod 4$. Bodies chase each other in the order $q_1 (red) $ $\rightarrow q_2 (black)$ $ \rightarrow q_3 (blue)$ $ \rightarrow q_4 (green)$ $ \rightarrow q_1 $. From left to right $\theta= \frac{4\pi}{9},$ $\frac{6\pi}{13},$ and  $\frac{14\pi}{29}$.  }
\label{fig6}\end{figure}

\begin{figure}
\includegraphics[height=5cm,width=.32\textwidth]{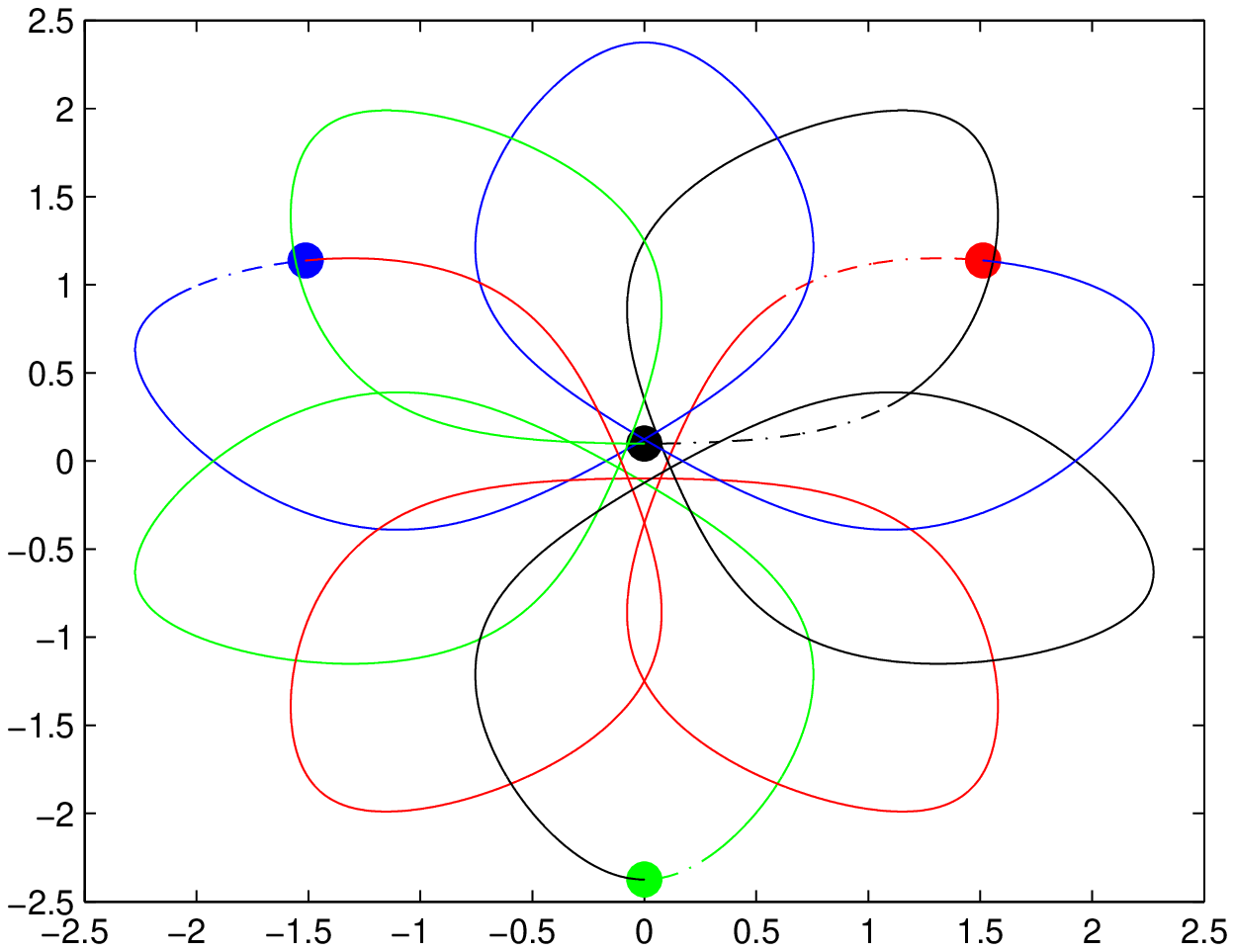}
\includegraphics[height=5cm,width=.32\textwidth]{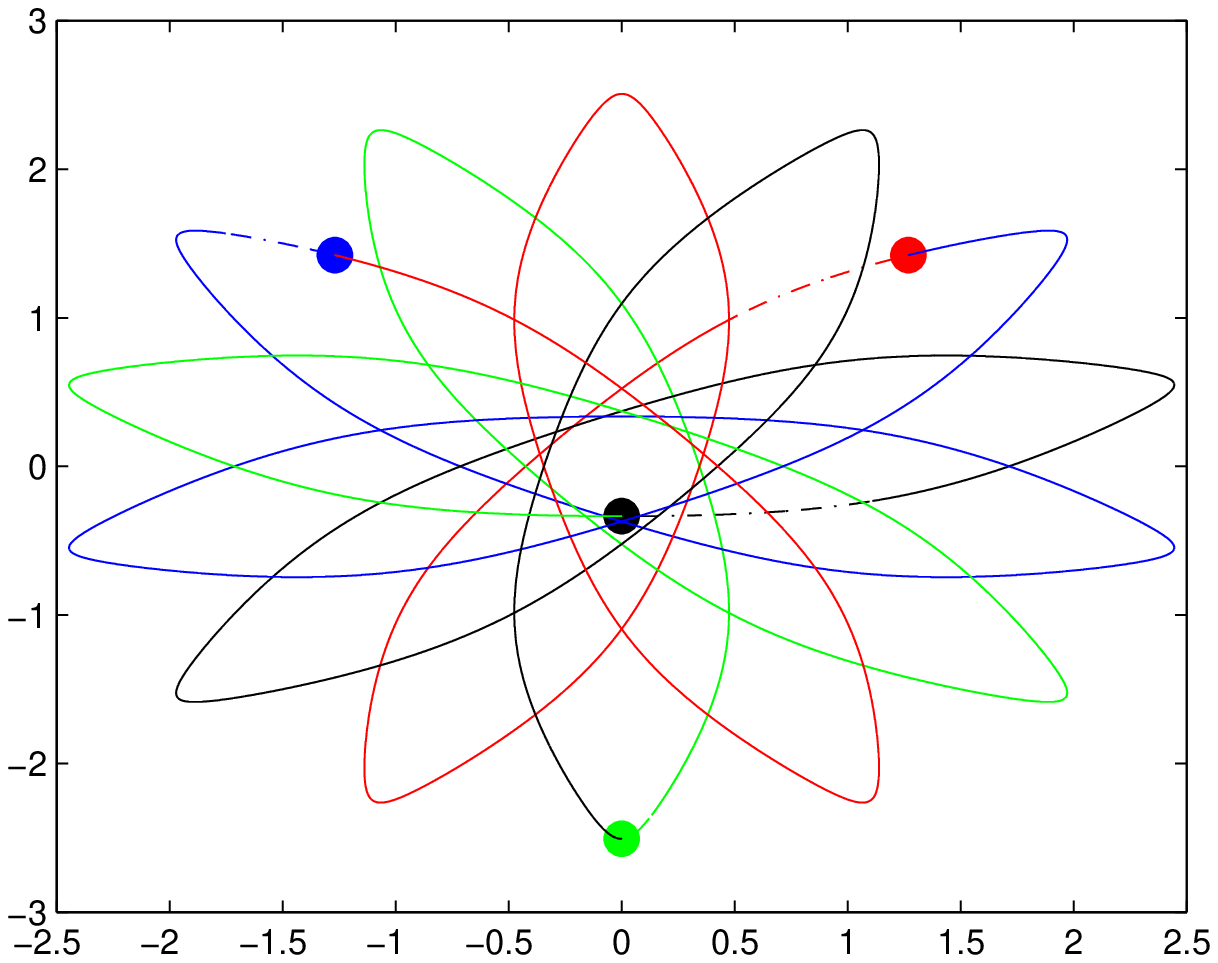}
\includegraphics[height=5cm,width=.32\textwidth]{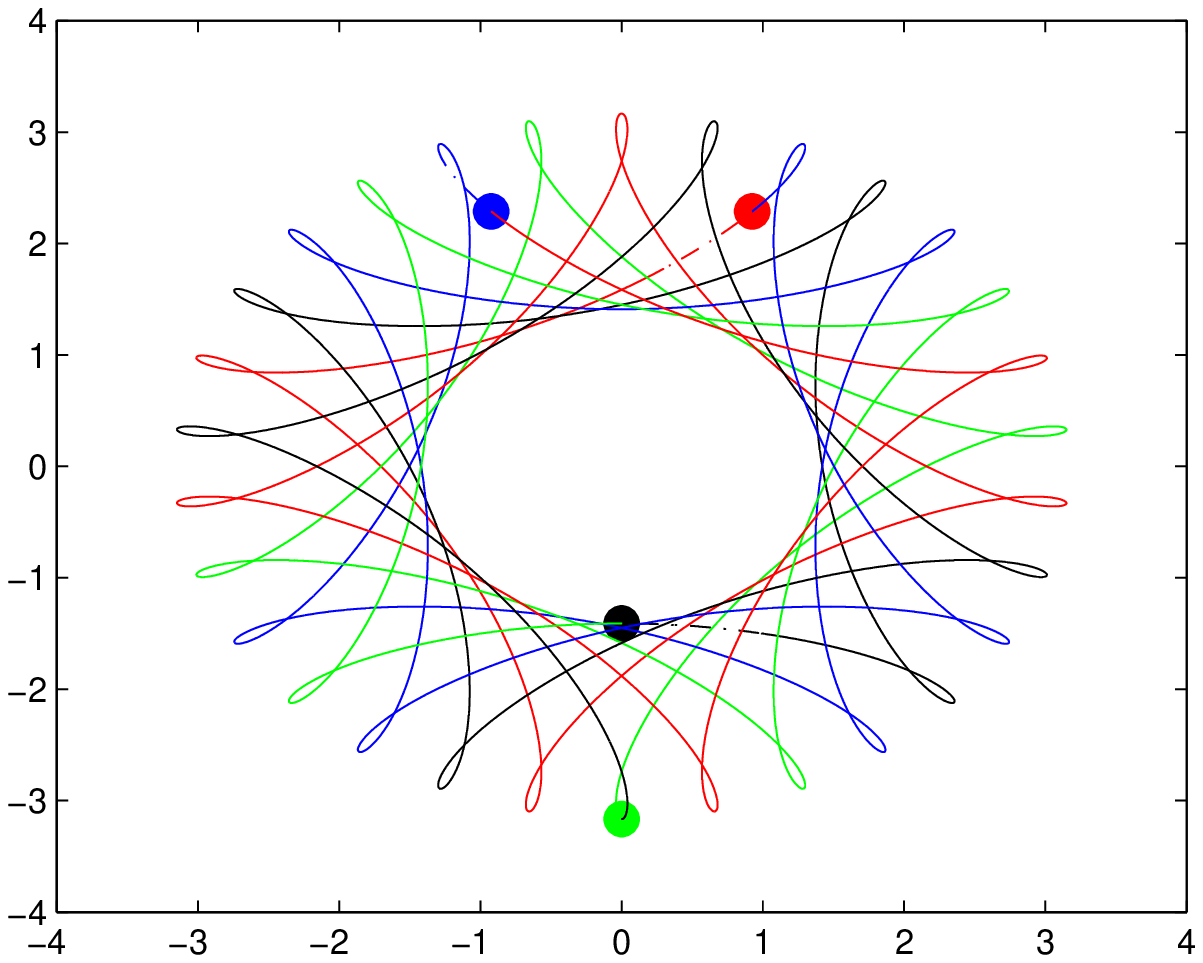}
\caption{\small Double-Choreographic Solutions for $\theta = \frac{P}{Q}\pi$ with $Q\equiv 2 \mod 4$. Bodies chase each other pairwisely as $q_1 (red) \rightarrow q_3 (blue) \rightarrow q_1$ and $q_2 (black) \rightarrow q_4 (green)\rightarrow q_2$. From left to right $\theta= \frac{3\pi}{10},$ $\frac{5\pi}{14},$ and  $\frac{13\pi}{30}$.  }
\label{fig7}\end{figure}

\begin{figure}
\includegraphics[height=5cm,width=.32\textwidth]{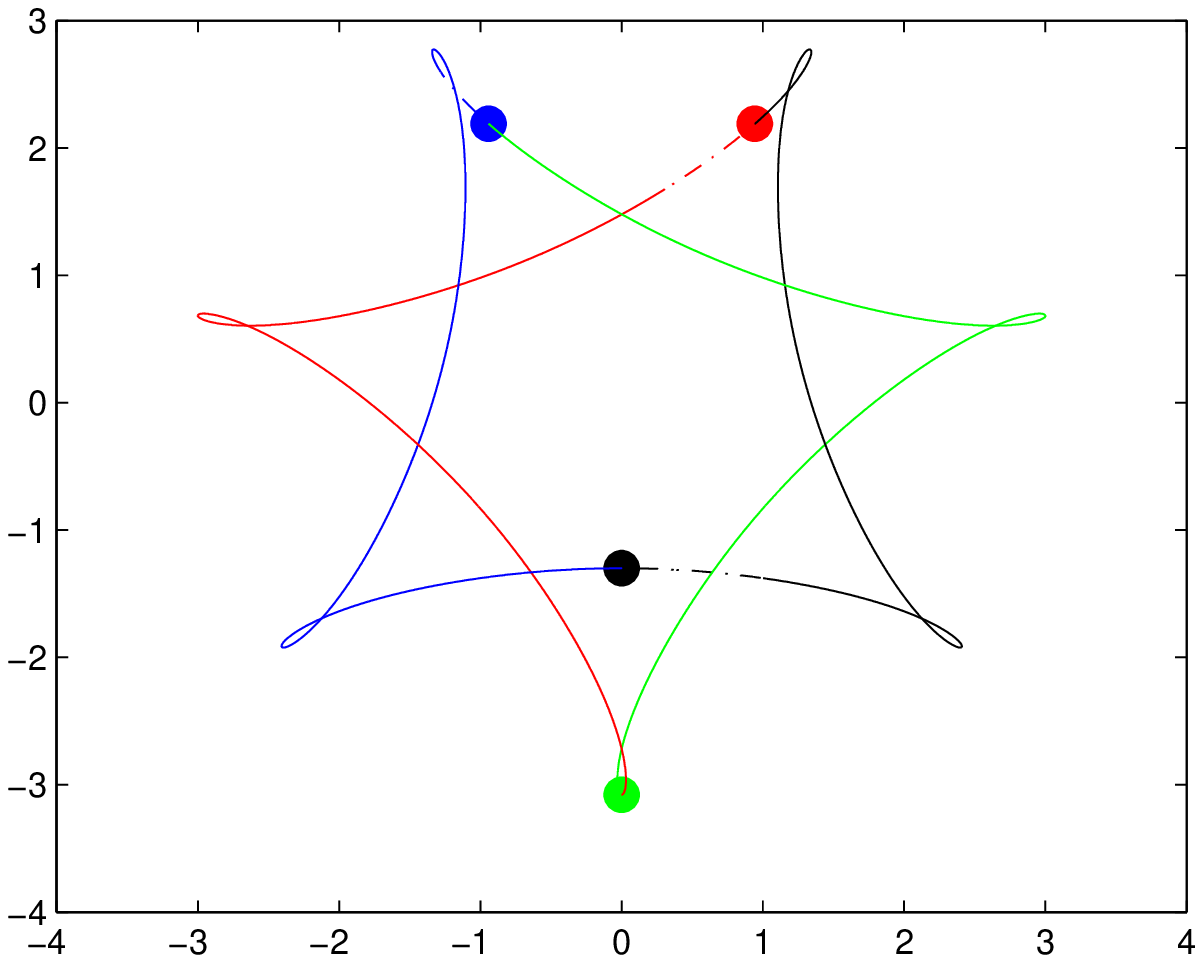}
\includegraphics[height=5cm,width=.32\textwidth]{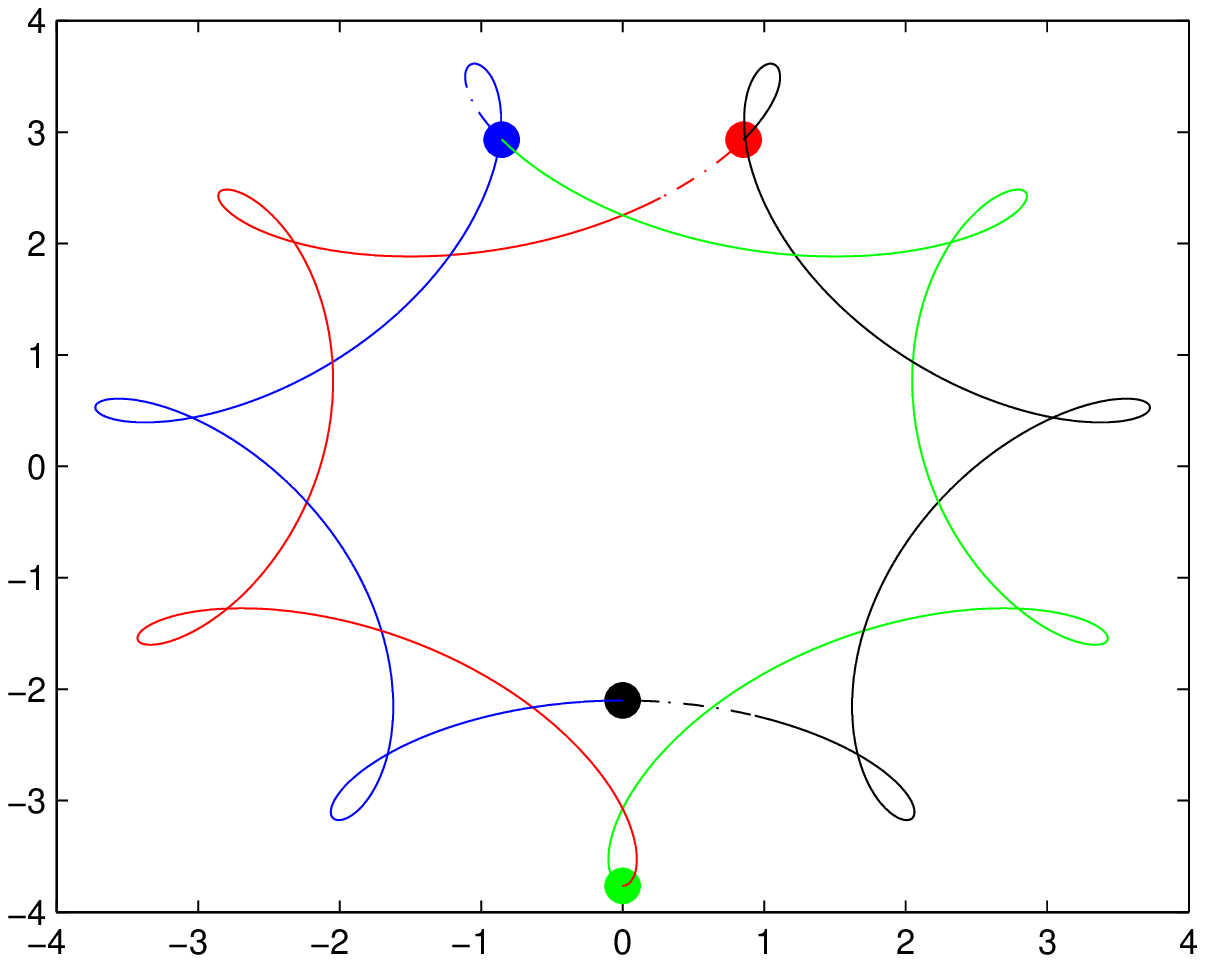}
\includegraphics[height=5cm,width=.32\textwidth]{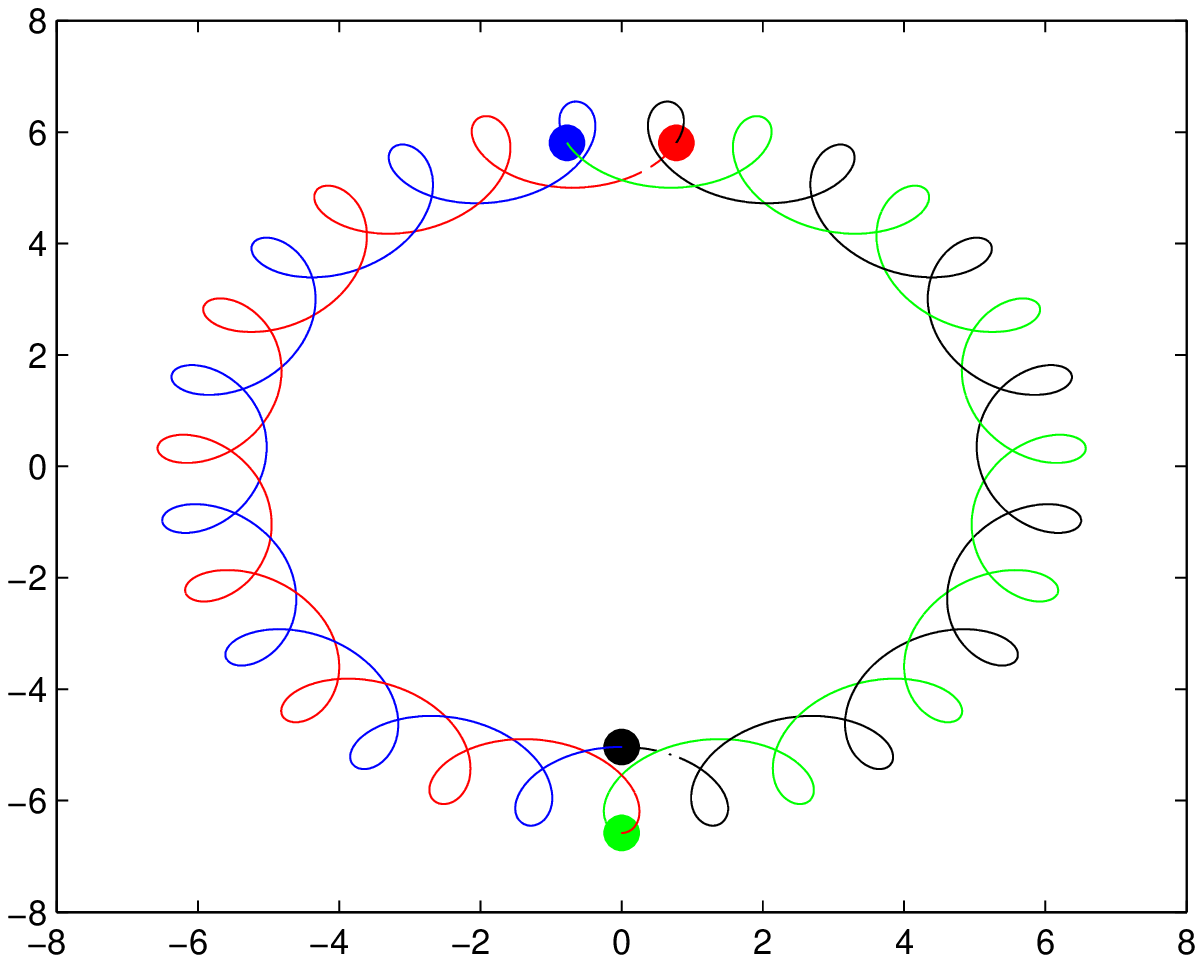}
\caption{\small Simple Choreographic Solutions for $\theta = \frac{P}{Q}\pi$ with $Q\equiv 3 \mod 4$. Bodies chase each other in the order $q_1 (red) \rightarrow q_4 (green) \rightarrow q_3 (blue) \rightarrow q_2 (black)\rightarrow q_1$. From left to right $\theta= \frac{3\pi}{7},$ $\frac{5\pi}{11},$ and  $\frac{15\pi}{31}$.  }
\label{fig8}\end{figure}

\begin{figure}
\includegraphics[height=5cm,width=.32\textwidth]{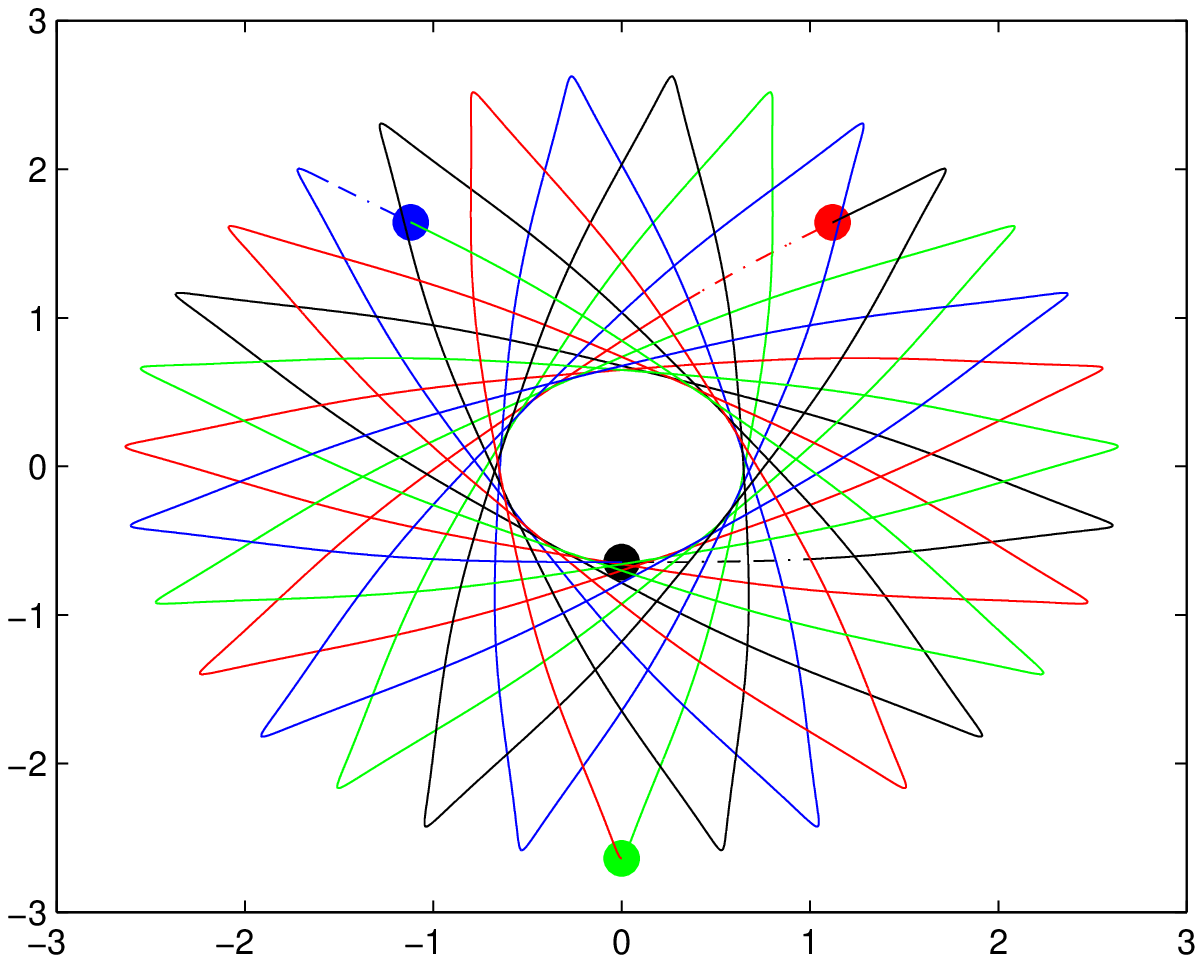}
\includegraphics[height=5cm,width=.32\textwidth]{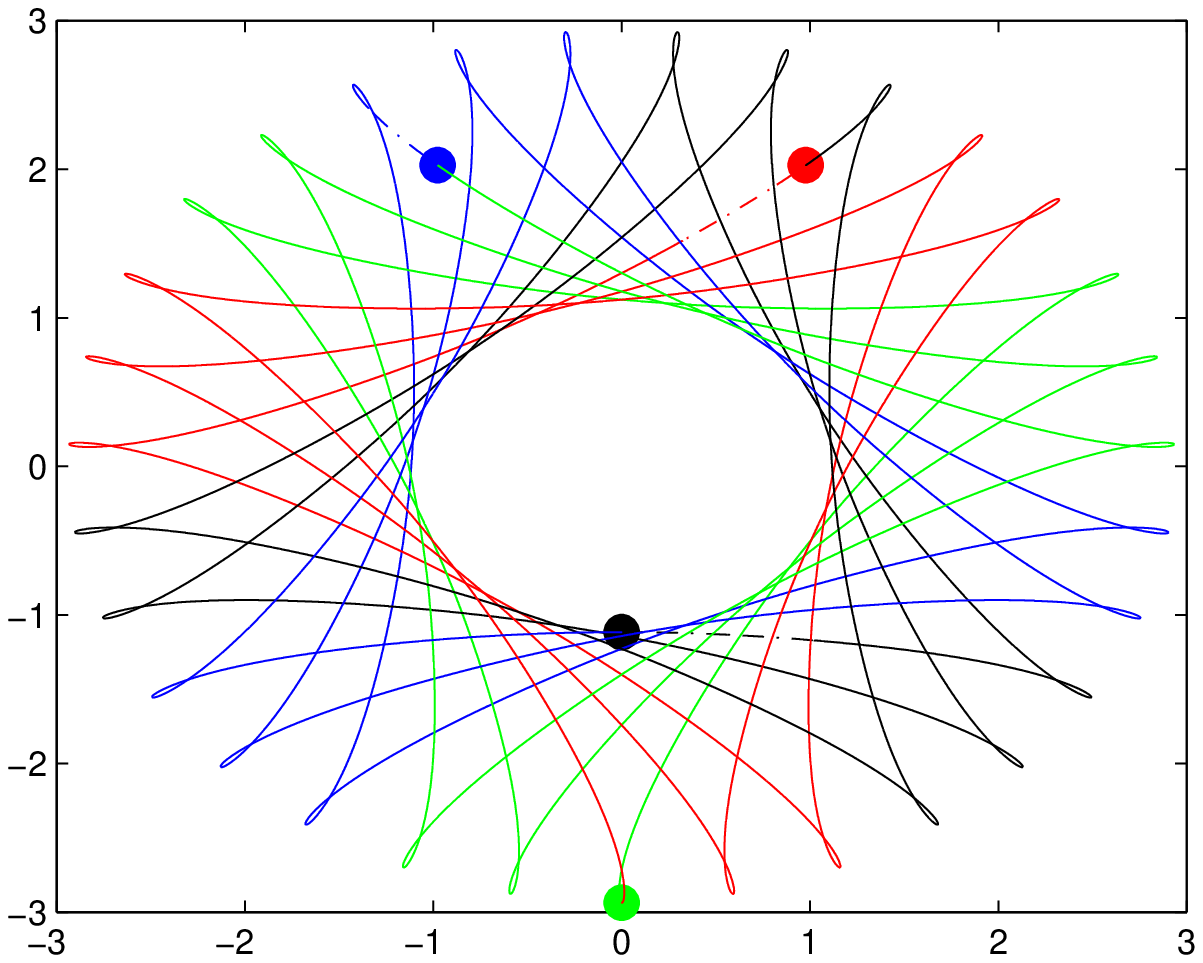}
\includegraphics[height=5cm,width=.32\textwidth]{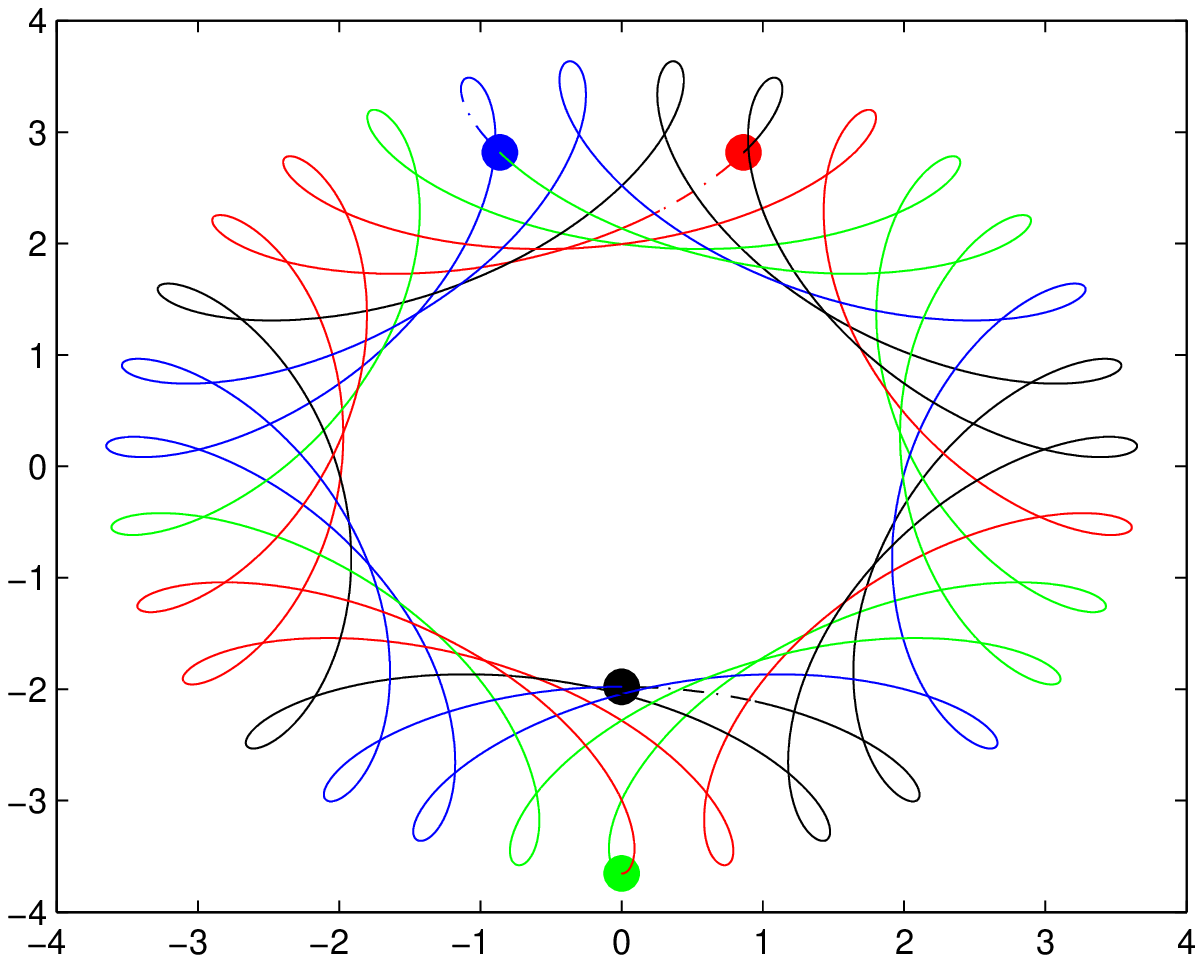}
\caption{\small Periodic solutions with same periods $\mathcal{T}=248T$. From left to right $\theta= \frac{12\pi}{31},$ $\frac{13\pi}{31},$ and  $\frac{14\pi}{31}$.  }
\label{fig11}\end{figure}
\begin{figure}
\includegraphics[height=5cm,width=.32\textwidth]{pathN15f31.eps}
\includegraphics[height=5cm,width=.32\textwidth]{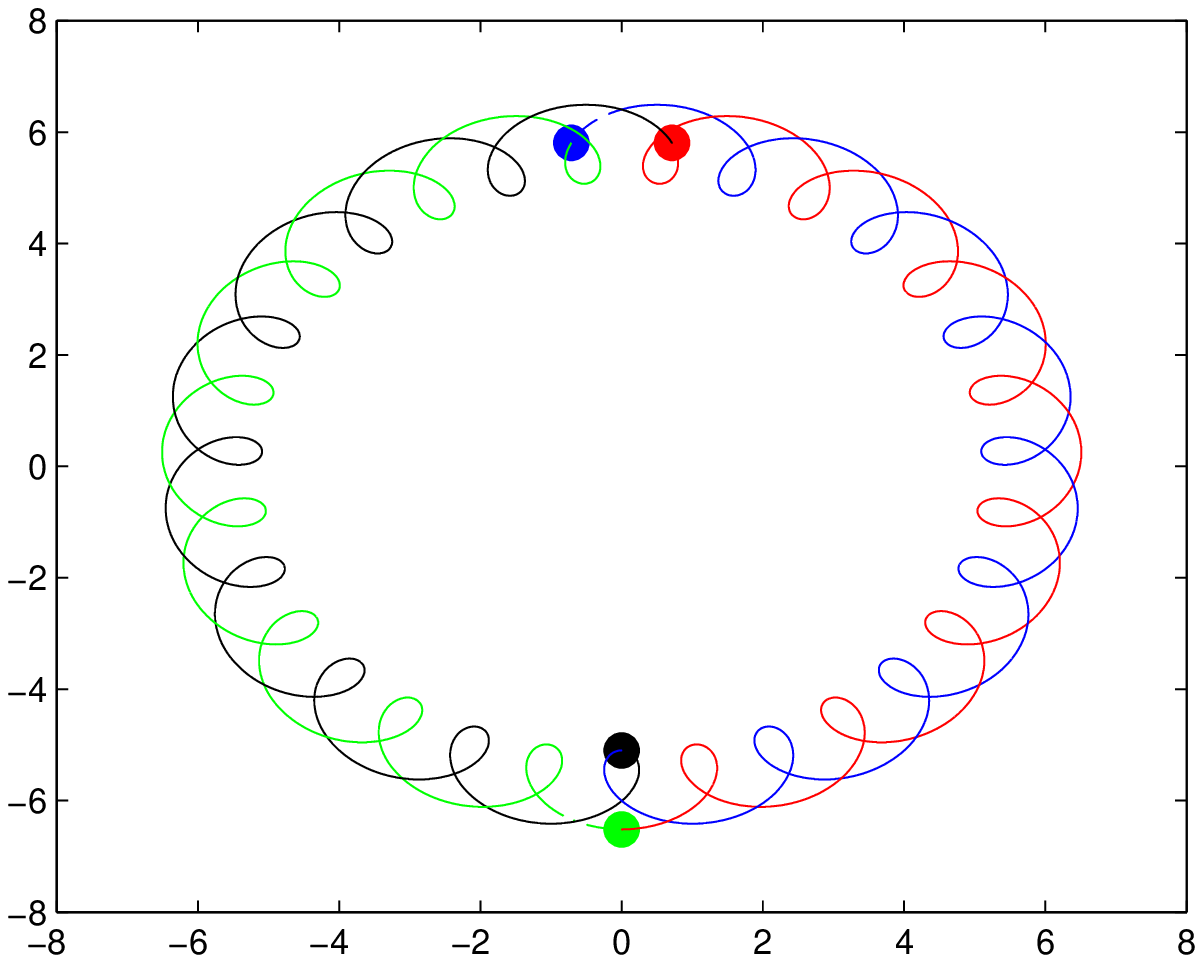}
\includegraphics[height=5cm,width=.32\textwidth]{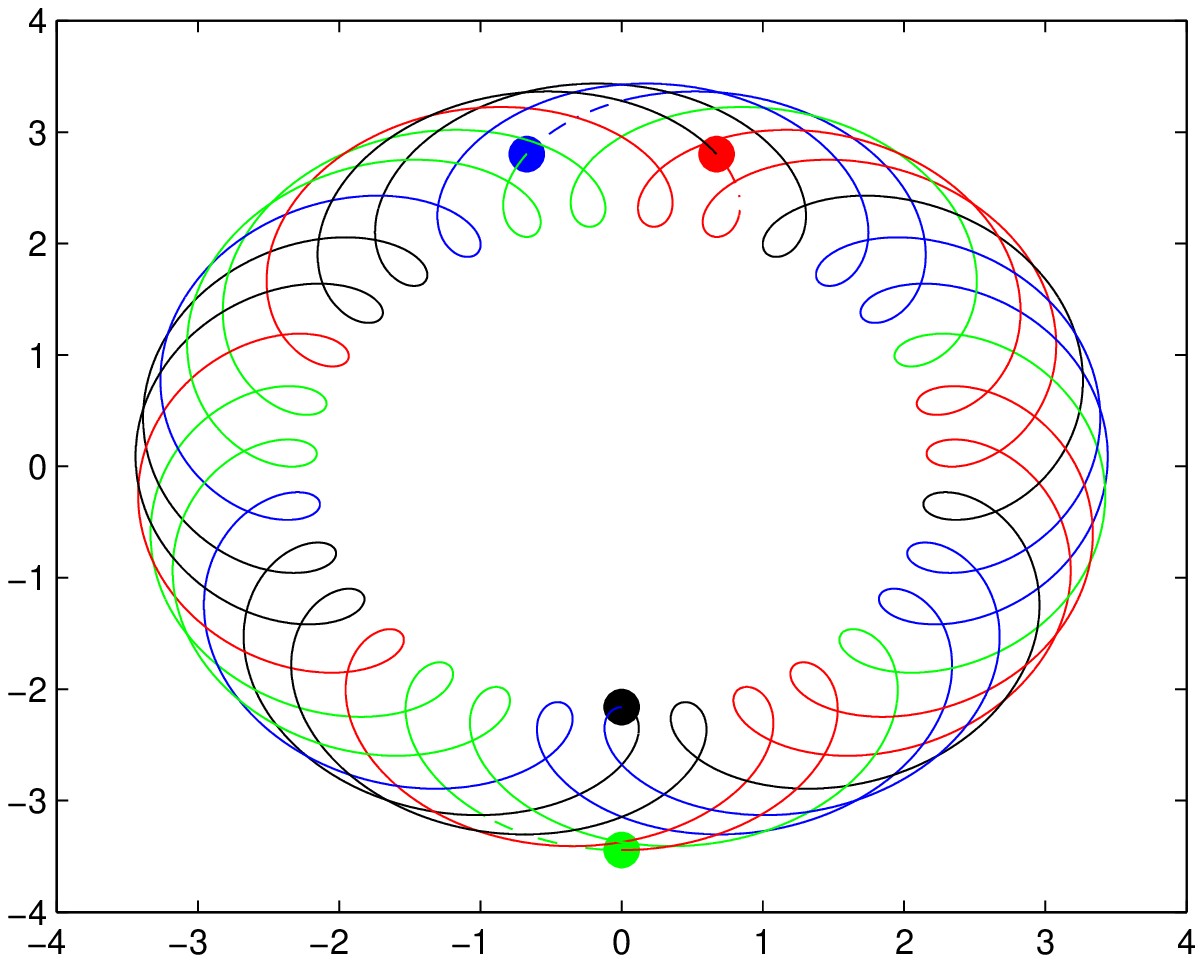}
\caption{\small Periodic solutions with same periods $\mathcal{T}=248T$. From left to right $\theta= \frac{15\pi}{31},$ $\frac{16\pi}{31},$ and  $\frac{17\pi}{31}$.  }
\label{fig11}\end{figure}

\begin{figure}
\includegraphics[height=5cm,width=.32\textwidth]{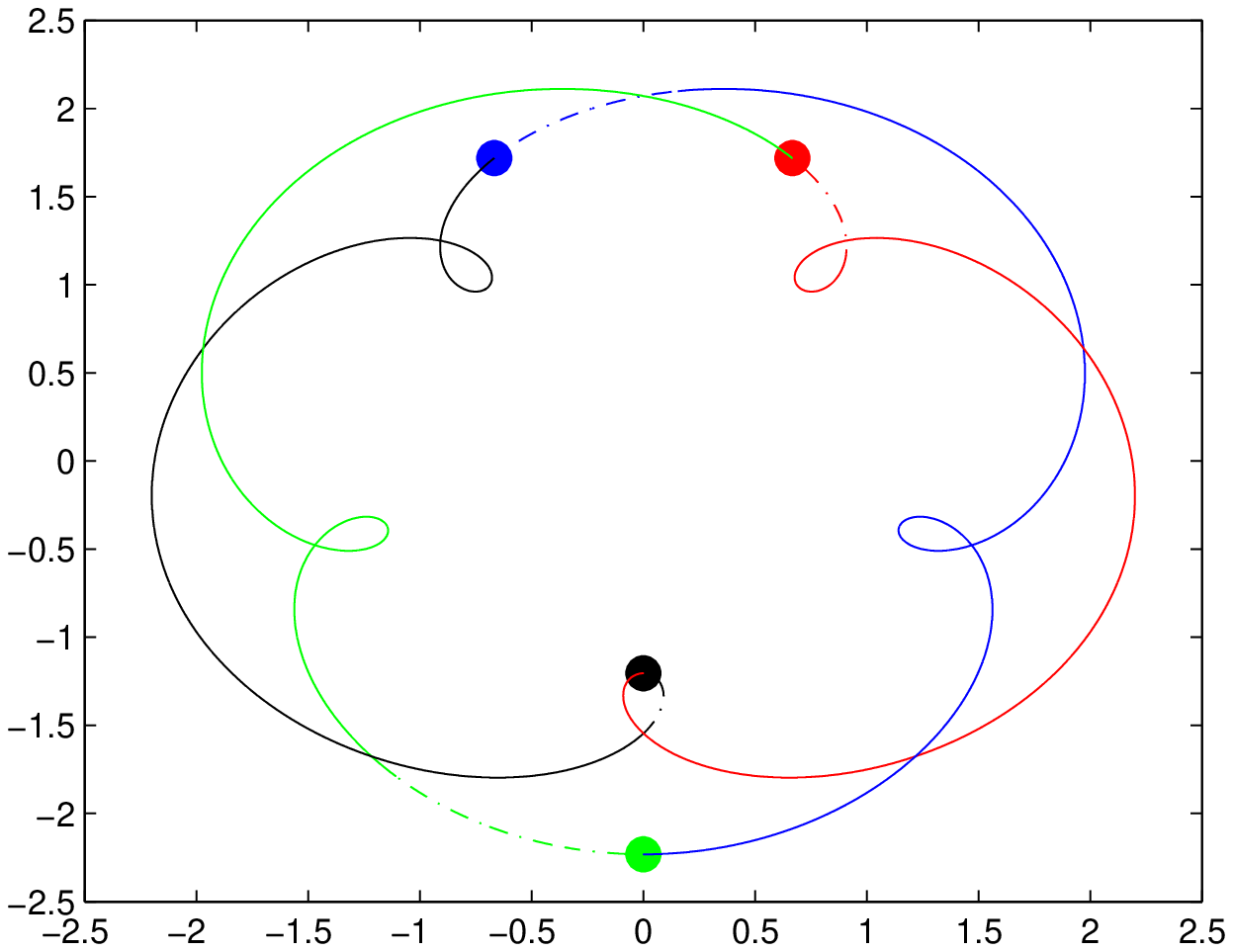}
\includegraphics[height=5cm,width=.32\textwidth]{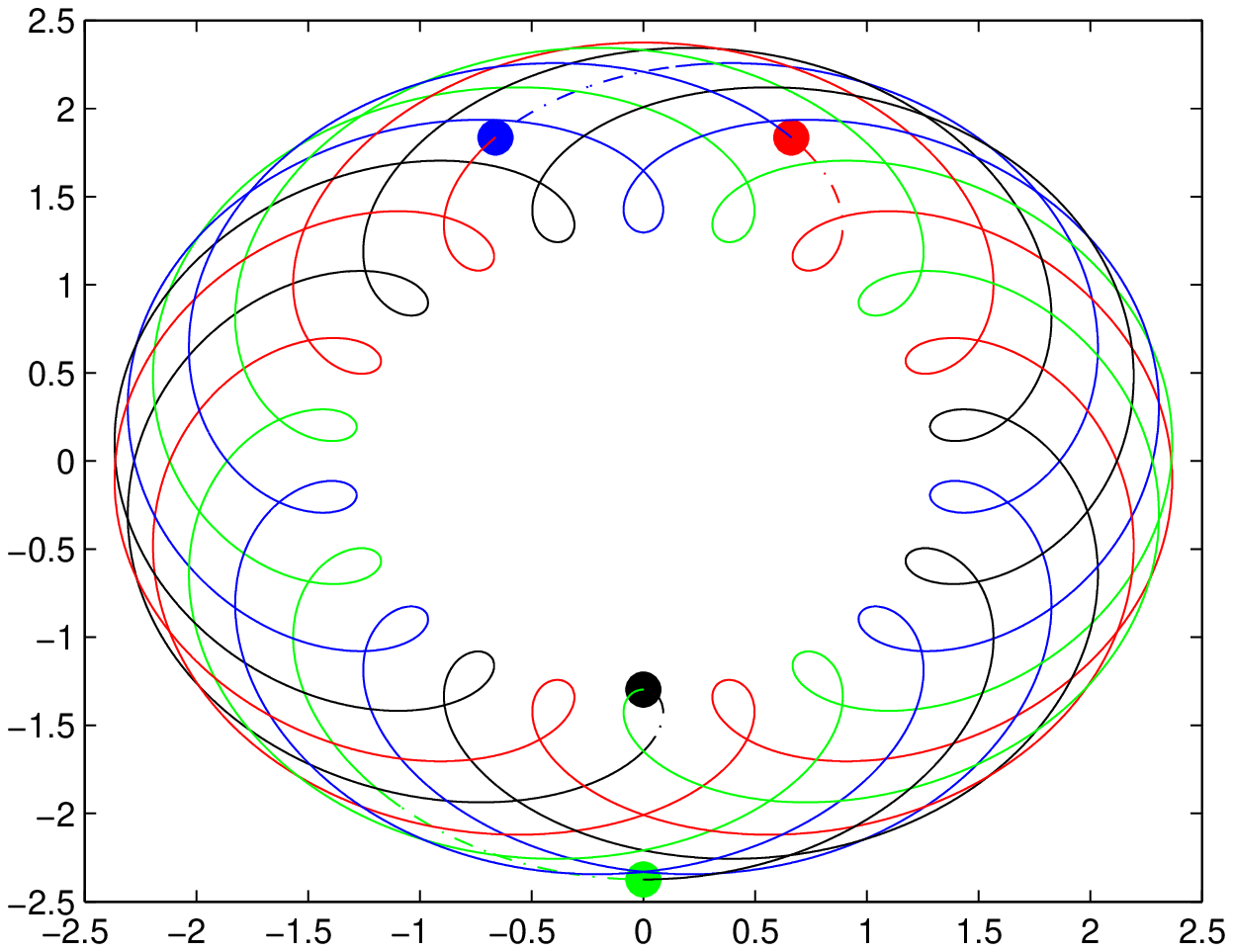}
\includegraphics[height=5cm,width=.32\textwidth]{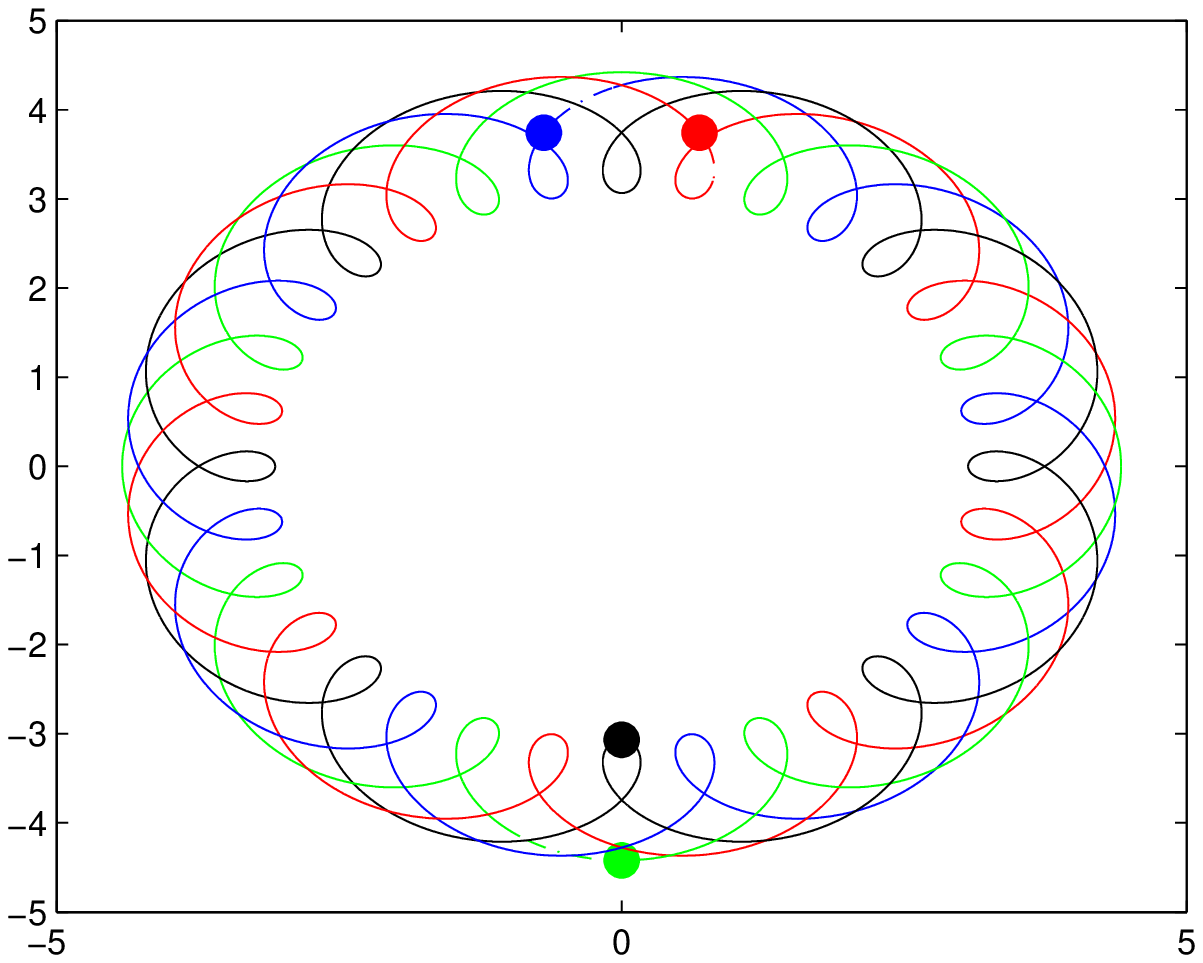}
\caption{\small Periodic solutions for $\theta>\frac{\pi}{2}$. From left to right $\theta= \frac{3\pi}{5},$ $\frac{13\pi}{22},$ and  $\frac{17\pi}{32}$.  }
\label{fig10}\end{figure}


\begin{thebibliography}{00}

\bibitem{ABT} G.  Arioli, V. Barutello, S. Terracini, A new branch of Mountain Pass solutions for the choreographical 3-body problem. Comm. Math. Phys. 268 (2006), no. 2, 439–-463.
\bibitem{BT} V. Barutello,  S. Terracini, Double choreographical solutions for n-body type problems, Celestial Mechanics and Dynamical Astronomy (2006) 95:67–-80.

    \bibitem{BT2}  V. Barutello, S. Terracini, Action minimizing orbits in the n-body problem with simple choreography constraint. Nonlinearity 17 (2004), no. 6, 2015–-2039.

    \bibitem{BCPS} E.  Barrabés, J. Cors, C. Pinyol,  J.  Soler,  Hip-hop solutions of the 2N-body problem. Celestial Mech. Dynam. Astronom. 95 (2006), no. 1--4, 55–-66.

    \bibitem{CA2} A. Chenciner, Action minimizing solutions in the Newtonian n-body problem: from
homology to symmetry. Proceedings of the International Congress of Mathematicians
(Beijing, 2002). Higher Ed. Press, Beijing, 279–-294, 2002. Erratum. Proceedings of the
International Congress of Mathematicians (Beijing, 2002). Higher Ed. Press, Beijing,
651–-653, 2002.
 \bibitem{CM}  A. Chenciner,R. Montgomery,  A remarkable periodic solution of the three body problem in the
case of equal masses. Ann. Math. 152, 881–-901 (2000).
\bibitem{CGMS} A. Chenciner, J. Gerver, R. Montgomery, C. Sim\'o, Simple choreographic motions of $N$ bodies: a preliminary study. Geometry, mechanics, and dynamics, 287–-308, Springer, New York, 2002.

 \bibitem{CV} A. Chenciner, A. Venturelli, Minima of the action integral of the Newtonian problem of four bodies of equal mass in $R^3$: "hip-hop'' orbits, Celestial Mech. Dynam. Astronom. 77 (2000), no. 2, 139–-152.

\bibitem{Chen1} K. Chen, Existence and minimizing properties of retrograde orbits to the three-body problem with various choices of masses. Ann. of Math. (2) 167 (2008), no. 2, 325–-348.
 \bibitem{Chen2}    K. Chen,  Variational methods on periodic and quasi-periodic solutions for the N-body problem. Ergodic Theory Dynam. Systems 23 (2003), no. 6, 1691–-1715.
\bibitem{Ouyang1} K. Chen, T. Ouyang, Z. Xia, Action-minimizing periodic and quasi-periodic solutions in the $N$-body problem,   Math. Res. Lett. 19 (2012), no. 2, 483–-497.

\bibitem{DengZhangZhou} C. Deng, S. Zhang, Q. Zhou, Rose solutions with three petals for
planar 4-body problems, Sci. China Math, 2010, 53(12): 3085–-3094

\bibitem{FG} G. Fusco, G.F. Gronchi, P. Negrini, Platonic polyhedra, topological constraints
and periodic solutions of the classical N-body problem, Invent. math. (2011) 185:283–-332.

\bibitem{FT} D. Ferrario, S. Terracini, On the existence of collisionless equivariant
minimizers for the classical n-body problem, Invent. math. 155, 305–-362 (2004).


\bibitem{GW} W. Gordon, A minimizing property of Keplerian orbits, American Journal of Mathematics 99, no. 5, (1977) 961-971.

    \bibitem{HLS} X. Hu, Y. Long, and S. Sun, Linear stability of elliptic Lagrangian solutions of the classical planar three-body problem via index theory, preprint, arXiv:1206.6162
\bibitem{KS}    T. Kapela, C. Sim\'o,
Computer assisted proofs for nonsymmetric planar choreographies
and for stability of the Eight, {\em Nonlinearity} (2007), {\bf 20},
1241--1255.

\bibitem{YL} Y.  Long, $\omega$-index theory and linear stability of elliptic Lagrangian solutions of the classical three-body problem. Adv. Nonlinear Stud. 12 (2012), no. 4, 799–-818.

\bibitem{Ma1} C. Marchal, How the method of minimization of action avoids singularities, Clestial Mechanics and Dynamical Astronomy, 83 (2002) 325--353.

    \bibitem{Moor} C. Moore, Braids in Classical Gravity, Physical Review Letters 70 (1993) 3675--3679.
\bibitem{DO} D. Offin, H. Cabral, Hyperbolicity for symmetric periodic orbits in the isosceles three body problem. Discrete Contin. Dyn. Syst. Ser. S 2 (2009), no. 2, 379–-392.

    \bibitem{OuXie} T. Ouyang, Z. Xie, Stable simple choreographic solutions and double choreographic solutions to the four-body problem with various choices of masses, in progress.


    \bibitem{OuYan} T. Ouyang, D. Yan, Existence of spatial isosceles periodic orbits in three-body problem, in progress.

\bibitem{PH} H. Poincar\'{e}, Sur les solutions p\'{e}riodiques et le principe de moindre action, C.R. Acad, Sci, Paris. 123 (1896), 915--918.

    \bibitem{MHO2} K. Meyer, G. Hall, D. Offin, Introduction to Hamiltonian Dynamical Systems and the N-Body Problem, second edition, Springer, 2009.



    \bibitem{RG} G. Roberts, Linear stability ananlysis of the figure-eight orbit in the three-body problem, Ergod. Th. and Dynam. Sys. (2007), 27, 1947--1963.

    \bibitem{SM} C. Sim\'o, New families of solutions in the $N$-body problems, Proceedings of the third European Congress of Matheamtics, Casacuberta et al. edits, Progress in Mathematics (2001), 201, 101--115.
\bibitem{SH} H.J. Sperling, On the real singularities of the $N$-body problem, J. Reine Angew. Math. 245, (1970), 15-40.
\bibitem{SK} K.F. Sundman, M\'{e}moire sur le probl\`{e}des trois corps. Acta Math. 36, (1913), 105--179.
\bibitem{TE} S. Terracini, On the variational approach to the periodic $n$-body problem, Cel. Mech. Dyn. Ast. 95, 1--4, (2006), 3--25.
\bibitem{TV} S. Terracini, A. Venturelli, Symmetric trajectories for the $2N$-body problem with equal masses, Arch. Rational Mech. Anal. 184, 465--493 (2007).

\bibitem{Va}R.J. Vanderbei. New Orbits for the n-Body Problem. In Proceedings of the Conference on New Trends in Astrodynamics, 2003.

\end{thebibliography}
\end{document}